\documentclass[11pt]{amsart}
\usepackage{amssymb,epsfig,newlfont,amsmath,amsfonts,mathscinet,hyperref}
\usepackage[all]{xy}

\usepackage[english]{babel}
\usepackage{graphicx}
\usepackage{multirow}

\newcommand{\Ind}{\mathrm{Ind}}

\newcommand{\colim}{\operatornamewithlimits{colim}}

\makeatletter
\def\revddots{\mathinner{\mkern1mu\raise\p@
\vbox{\kern7\p@\hbox{.}}\mkern2mu
\raise4\p@\hbox{.}\mkern1mu\raise7\p@\hbox{.}\mkern1mu}}
\makeatother

\begin{document}

\newtheorem{thm}{Theorem}[section]
\newtheorem{lem}[thm]{Lemma}
\newtheorem{cor}[thm]{Corollary}
\newtheorem{prop}[thm]{Proposition}
\newtheorem{conj}[thm]{Conjecture}

\newtheorem*{thm*}{Theorem}
\newtheorem*{assump}{Assumption}
\newtheorem*{assumpA}{Assumption A}
\newtheorem*{assumpA1}{Assumption A for arbitrary $r$}
\newtheorem*{assumpA2}{Assumption A for $r=q$}
\newtheorem*{assumpB}{Assumption B}
\newtheorem*{assumpC}{Assumption C}
\newtheorem*{conj*}{Conjecture}
\newtheorem*{conds*}{Conditions}
\newtheorem*{cond*}{Condition}

\theoremstyle{definition}
\newtheorem{defn}[thm]{Definition}
\newtheorem{rmk}[thm]{Remark}


\title{A Functorial Refinement of the Franke Filtration and the Jacquet--Langlands Correspondence for Spaces of Automorphic Forms}

\author{Neven Grbac}
\thanks{This work was supported by the Croatian Science Foundation under the project number HRZZ-IP-2022-10-4615.
This work was funded by the EU NextGeneration under the Juraj Dobrila University of Pula institutional research projects number IIP\_UNIPU\_010159 and IIP\_UNIPU\_010162.}
\address{Neven Grbac, Juraj Dobrila University of Pula, Zagreba\v{c}ka 30, HR-52100 Pula, Croatia}
\email{neven.grbac@unipu.hr}

\author{Harald Grobner}
\thanks{The second named author is also supported by the Principal Investigator project PAT4628923 of the Austrian Science Fund (FWF)}
\address{Harald Grobner, Faculty of Mathematics, University of Vienna, Oskar-Morgenstern-Platz 1, A-1090 Vienna, Austria}
\email{harald.grobner@univie.ac.at}

\begin{abstract}
The global Jacquet--Langlands correspondence is an instance of Langlands functoriality, namely
the expected lifting of the irreducible automorphic representations
of an inner form of the general linear group to the split form via the identity morphism of $L$-groups.
It is established, by the work of Badulescu, in the case of irreducible
components of the discrete spectrum. The purpose of this paper is
to extend this correspondence beyond the discrete spectrum.
To this end, the point of view of the Franke filtration of spaces of automorphic forms
is taken. In fact, our technical key ingredient is a functorial refinement of the Franke filtration, which allows us to establish the Jacquet--Langlands correspondence between consecutive quotients of this refined filtration on the general linear group and its inner form. As a result, our extended Jacquet--Langlands correspondence properly extends Badulescu's correspondence and contains the full functorial lift, predicted by Langlands functoriality.
\end{abstract}

\keywords{Jacquet--Langlands correspondence, automorphic forms, general linear group, Franke filtration}
\subjclass[2020]{11F70, 22E55}
\date{\today}

\maketitle

\markleft{\uppercase{Neven Grbac and Harald Grobner}}
\markright{\uppercase{The Jacquet--Langlands Correspondence for Spaces of Automorphic Forms}}

\tableofcontents

\section*{Introduction}
\label{sect:intro}

In the classical Langlands program for complex representations of reductive groups over global fields,
one of the fundamental predicted and expected features is Langlands functoriality, cf.~\cite{arthur:in-genesis-of-l-program},
\cite{arthur-gelbart:lectures}, \cite{cogdell:dual-gps-langlands-funct}.
Given an $L$-morphism between the $L$-groups of two reductive groups $G$ and $H$ over a number field,
Langlands functoriality predicts a natural lift of the irreducible automorphic representations
of their respective ad\`{e}lic groups via the global Langlands correspondence. This lift should hence be compatible with local versions
of functoriality. In particular, at unramified places the Satake parameters should be matched. The seed of these ideas lies in the Langlands' letter to Weil \cite{langlands:letter-to-weil} (appeared in print as part of \cite[Chap.~VI]{dumbaugh-schwermer:artin-book})
and \cite{langlands:functoriality-original-reference}.\\\\
The Jacquet--Langlands correspondence is one of the instances of Langlands functoriality.
It is the above lift for irreducible automorphic representations of inner forms of the
general linear group to their split form, that is, more precisely, from general linear groups $GL_m$ over division algebras $D$ over number fields $F$ to the associate split general linear group $GL_n$ over $F$ itself.
The general linear group and its inner forms share the same $L$-group, and the Jacquet--Langlands
correspondence is the Langlands functoriality associated with the identity map viewed as an $L$-morphism,
cf.~\cite{knapp-rogawski:appl-trace-formula-edinburgh}.\\\\
Historically, the Jacquet--Langlands correspondence was one of the first instances of Langlands functoriality
that was studied in some detail. Already the book of Jacquet and Langlands \cite{jacquet-langlands:book}
solved the case of the general linear group $GL(2)$ and the multiplicative group of a quaternion algebra,
both over a local and global field. The local and global Jacquet--Langlands correspondence was developed
over the following years and decades in a series of papers
\cite{flath:global-jl-gl3},
\cite{rogawski:local-jl-corr-with-div-alg},
\cite{deligne-kazhdan-vigneras:local-jl-corr},
\cite{flicker:orb-int-div-algs},
\cite{badulescu:local-jl-unitary},
culminating finally in the proof of the global Jacquet--Langlands correspondence for discrete spectrum irreducible automorphic representations in the work of Badulescu
\cite{badulescu:global-jl}, relying heavily on the study of the trace formula by Arthur and Clozel \cite{arthur-clozel:book}.
The proof of Badulescu assumes the technical condition that the division algebra splits at the archimedean places,
which was removed soon after by the paper of Badulescu and Renard \cite{badulescu-renard:global-jl-arcimedean}.
See also the very nice expository lecture notes of Badulescu on the proof of the global Jacquet--Langlands correspondence for the discrete spectrum.
\cite{badulescu:expository-proof-of-global-jl}.\\\\
The global Jacquet--Langlands correspondence in \cite{badulescu:global-jl} is compatible
with the local Jacquet--Langlands correpondence, which Badulescu extended from
the case of discrete series representations, achieved by Deligne--Kazhdan--Vign\'{e}ras in \cite{deligne-kazhdan-vigneras:local-jl-corr},
to the case of ($d$-compatible) irreducible unitary representations. This extended local correspondence is characterized by the equality, up to a sign, of characters of irreducible unitary representations on matching conjugacy classes of regular semisimple elements.\\\\
The natural question that can be raised at this point is whether Bedulescu's extended local correspondence for
irreducible unitary representations can be used to extend the global Jacquet--Langlands correspondence to
irreducible unitary automorphic representations, which are not in the discrete spectrum. It was already shown by Badulescu in \cite[Sect.~5.3]{badulescu:global-jl} that this is impossible. In \emph{loc.~cit.}, he provides counterexamples and concludes that the global Jacquet--Langlands correspondence cannot be extended to non-square-integrable irreducible unitary automorphic representations in a way, which is compatible with his local correspondence between unitary representations.\\\\
Hence, to extend the global Jacquet--Langlands correspondence beyond the discrete spectrum, a new point of view has to be taken, and this is the main theme of the present paper. In our new approach the focus is moved from the irreducible automorphic representations, viewed as abstract irreducible representations with (necessarily abstract) local irreducible components, to the internal representation theoretic structure of the spaces of all automorphic forms. In other words, instead of an abstract local/global correspondence between irreducible automorphic representations, we obtain a correspondence of specific automorphic realizations, which are not necessarily irreducible, but provide a functorial lift of spaces of concrete automorphic forms.\\\\
Let us explain this in more details. As a matter of fact, 
the first step in understanding the representation theoretical structure of the whole space of automorphic forms of a reductive group is the decomposition into a direct sum with respect to the {\it cuspidal support} \cite{langlands:letter-to-borel},
\cite{langlands:eis-ser-book}, \cite{moeglin-waldspurger:book}, \cite{franke-schwermer:decomp-aut-forms}, \cite{grobner-zunar:smooth-auto-forms}. In turn, the best way to investigate the structure of the individual summands in this decomposition is the Franke filtration \cite{franke:filtration}, \cite{franke-schwermer:decomp-aut-forms}, \cite{grbac:franke-filt-max-par}, \cite{grobner:franke-compositio}, \cite{grbac-grobner:franke-gln}, \cite{grbac:franke-sp4-memoirs}. It is a finite descending filtration, whose consecutive quotients are described in terms of representations parabolically induced from discrete spectrum automorphic representations of the Levi factors that satisfy a compatibility condition with the cuspidal support considered. Very roughly speaking, it provides an ordering of the Eisenstein series appearing, according to their (in large parts mysterious) analytic behaviour. Now, the crucial observation in order to establish a global Jacquet-Langlands correspondence beyond the discrete automorphic spectrum is to look at the various spaces of automorphic forms supported in a fixed cuspidal automorphic representation of a Levi factor, and to {\it transfer its fine internal structure given by the Franke filtration}, in particular, of its consecutive quotients.\\\\
We would like to emphasize that the Franke filtration is not
just a convenient way to establish the Jacquet--Langlands correspondence.
It is a natural and very useful approach to representation theoretic difficulties
in the internal structure of spaces of automorphic forms with many applications.
Franke \cite{franke:filtration} originally applied the filtration to prove a fundamental ``threefold'' of results, namely that every non-cuspidal
automorphic form is a derivative of an Eisenstein series or a residue thereof,
that the cohomology of arithmetic
congruence subgroups can be determined from the relative Lie algebra cohomology
of the space of automorphic forms (sometimes referred to as a conjecture of Borel and Harder), and a certain type of trace formula for
Hecke operators in full cohomology.
Generalizing a deep result of Clozel, Franke \cite{franke:filtration} and Franke--Schwermer \cite{franke-schwermer:decomp-aut-forms} used the Franke filtration to further prove the rationality of the decomposition along the parabolic and even the
cuspidal support in automorphic cohomology of the general linear group, a result which was finally extended to all inner forms of $GL(n)$ by Grobner--Raghuram in \cite{grobner-raghuram}.
Later, the explicit descriptions of the Franke filtration paved the way to constructions
of automorphic cohomology and cohomology of arithmetic groups
\cite{grbac-grobner:eis-coho-sp4},
\cite{grobner:franke-compositio},
\cite{grbac-schwermer:low-rank-existence},
\cite{grbac-schwermer:unitary-groups},
\cite{grbac-grobner:coho-slnZ},
and recently even to a new method for proving holomorphy of Eisenstein series
\cite{grbac:method},
\cite{grbac:method-sp4}.
See also \cite{grbac-schwermer:eis-coho-gl4-inner} and \cite{grobner-raghuram}
for an early discussion of the Jacquet--Langlands correspondence in cohomology.
Very recently, the explicit calculations of cohomology using the Franke filtration were
the crucial ingredients in the construction of non-trivial elements in a Bloch--Kato Selmer group
in accordance with the Bloch--Kato conjecture
\cite{mundy:franke-filt-eis-coho-g2}, \cite{mundy:franke-applied-g2-selmer-gps}.
In turn, the exciting work of Su \cite{su:coherent-coho-shimura-auto-forms}
reveals that the coherent cohomology of Shimura varieties is
isomorphic to the $(\mathfrak{p}_h,K_h)$-cohomology of the attached space of automorphic forms, opening a new door for investigations of the arithmetic of automorphic forms and representations appearing in coherent cohomology.
To cut a long story short, the Franke filtration is highly expected to continue to play a profound role in future developments. Hence, we expect that our construction of the Jacquet--Langlands correspondence for spaces of automorphic forms in terms of the Franke filtration can lead
to new explicit constructions in cohomology, as well as applications in
arithmetic and geometry.\\\\
In order to state here the main result, we introduce some notation and refer to the body of the
paper for more details.
Let $G$ be the general linear
group, defined over a number field $F$ with the ring of ad\`{e}les $\mathbb{A}$, and
let $G'$ be an inner form of $G$ given as the group of invertible elements
in the matrix algebra over a division algebra $D$ over $F$.
Let $\mathcal{A}_{\{P'\},\varphi(\pi')}$ be the space of automorphic forms on $G'(\mathbb{A})$
with the cuspidal support in the associate class $(\{P'\},\varphi(\pi'))$,
represented by a cuspidal automorphic representation $\pi'$ of the Levi factor of a parabolic subgroup $P'$ of $G'$.
It turns out, cf.~Step 0 in Sect.~\ref{sect:global-JL-beyond-discrete}, that the global Jaquet--Langlands
correspondence for the discrete automorphic spectrum \cite{badulescu:global-jl}
gives rise to the unique cuspidal support $(\{Q\},\varphi(\sigma))$ for $G(\mathbb{A})$
corresponding to the cuspidal support $(\{P'\},\varphi(\pi'))$ for $G'(\mathbb{A})$.
In other words, one should face the delicate task of establishing a global Jacquet--Langlands correspondence between the spaces $\mathcal{A}_{\{P'\},\varphi(\pi')}$ and $\mathcal{A}_{\{Q\},\varphi(\sigma)}$ of automorphic forms, that respects their internal representation theory.\\\\
To this end, one needs a compatible structural description of these two spaces of automorphic forms.
The main difficulty is that the Franke filtrations of spaces of automorphic forms on the general linear
group, on the one hand, and of its inner form, on the other hand, are not compatible, leading to the dictum that the Franke filtration as defined in \cite{franke:filtration}
is not functorial.\\\\
One of the main results of this paper is a functorial refinement of the
Franke filtration of spaces of automorphic forms on the general linear group. This functorial refinement of the Franke filtration and the Jacquet--Langlands correspondence for spaces
of automorphic forms are established in Theorem \ref{thm:franke-inner-form},
Theorem \ref{thm:franke-modified} and Theorem \ref{thm:main-result-JL-beyond},
which are stated in a combined and slightly simplified way in our theorem below, avoiding technical details. We point out that
the theorems in the body of the paper are more explicit and the established global Jacquet--Langlands correspondence can be
effectively calculated.

\begin{thm*}
Let $\mathcal{A}_{\{P'\},\varphi(\pi')}$, respectively, $\mathcal{A}_{\{Q\},\varphi(\sigma)}$, be the spaces of automorphic
forms on $G'(\mathbb{A})$, respectively, $G(\mathbb{A})$, with cuspidal support in the associate class
$(\{P'\},\varphi(\pi'))$, respectively, $(\{Q\},\varphi(\sigma))$, as above.
Then, there exists a Franke filtration
$$
\mathcal{A}_{\{P'\},\varphi(\pi')}=\mathcal{A}_{\{P'\},\varphi(\pi')}^0\supsetneqq
\mathcal{A}_{\{P'\},\varphi(\pi')}^1\supsetneqq \dots \supsetneqq \mathcal{A}_{\{P'\},\varphi(\pi')}^{\ell'}\supsetneqq \mathcal{A}_{\{P'\},\varphi(\pi')}^{\ell'+1}=\{0\}
$$
of the space $\mathcal{A}_{\{P'\},\varphi(\pi')}$, and
there exists a functorial refinement of the Franke filtration
$$
\mathcal{A}_{\{Q\},\varphi(\sigma)}=\mathcal{A}_{\{Q\},\varphi(\sigma)}^0\supsetneqq
\mathcal{A}_{\{Q\},\varphi(\sigma)}^1\supsetneqq \dots \supsetneqq \mathcal{A}_{\{Q\},\varphi(\sigma)}^{\ell}\supsetneqq \mathcal{A}_{\{Q\},\varphi(\sigma)}^{\ell+1}=\{0\}
$$
of the space $\mathcal{A}_{\{Q\},\varphi(\sigma)}$, such that there is an increasing function
$$
\phi:\{0,1,\dots ,\ell'\}\to\{0,1,\dots ,\ell\}
$$
and an extended global Jacquet--Langlands correspondence for spaces of automorphic forms, denoted by $\mathbf{G}$,
satisfying
$$
\mathbf{G}\left(\mathcal{A}_{\{P'\},\varphi(\pi')}^i\slash \mathcal{A}_{\{P'\},\varphi(\pi')}^{i+1}\right)\cong
\mathcal{A}_{\{Q\},\varphi(\sigma)}^{\phi(i)}\slash \mathcal{A}_{\{Q\},\varphi(\sigma)}^{\phi(i)+1}
$$
on the consecutive quotients of the filtration. These consecutive quotients of the Franke filtration are described in terms of
parabolic induction from discrete spectrum automorphic representations of Levi factors, and the established
Jacquet--Langlands correspondence $\mathbf{G}$ is compatible with the Jacquet--Langlands correspondence
for the discrete spectrum applied to the inducing data.
\end{thm*}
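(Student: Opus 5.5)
The plan is to build both filtrations from Franke's construction and then match them through Badulescu's correspondence on the inducing data. I would proceed in four steps.

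\textbf{Step 0: matching cuspidal supports.} Write $P'$ with Levi $M'\cong GL_{m_1}(D)\times\dots\times GL_{m_k}(D)$ and $\pi'=\pi'_1\otimes\dots\otimes\pi'_k$. By Badulescu's global correspondence $\mathbf{G}$, each $\mathbf{G}(\pi'_j)$ is a discrete spectrum representation of $GL_{m_jd}(\mathbb{A})$. By Moeglin--Waldspurger it is a Speh representation $\mathrm{MW}(\rho_j,b_j)$ with $\rho_j$ cuspidal. The associate class $(\{Q\},\varphi(\sigma))$ is then defined by the Levi $\prod_j GL_{n_j}^{b_j}$ with cuspidal datum $\bigotimes_j \rho_j|\cdot|^{(b_j-1)/2}\otimes\dots\otimes\rho_j|\cdot|^{-(b_j-1)/2}$. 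Well-definedness up to association follows from the uniqueness in Moeglin--Waldspurger.

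\textbf{Step 1: the filtration on $G'$.} Franke's filtration is defined via the order on the cone of exponents of the constant terms. Its consecutive quotients are direct sums, over pairs $(P'_i,\Pi')$, of $\mathrm{Ind}_{P'_i(\mathbb{A})}^{G'(\mathbb{A})}\bigl(\Pi'\otimes S(\check{\mathfrak a}_{P'_i})\bigr)_{\lambda}$. Here $\Pi'$ runs over discrete spectrum residual representations of $M'_i$ supported in $\varphi(\pi')$, and $\lambda$ is an anti-dominant parameter at which the Eisenstein series are evaluated. The residual spectrum of $GL_m(D)$ is known: Badulescu and Grbac describe it via the Speh-type representations $\mathrm{MW}'(\pi',s)$, with exponents shifted by $s_{\pi'}$. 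I would therefore make the set of triples $(P'_i,\Pi',\lambda)$ explicit and order them by the length (or norm) of $\lambda$, as Franke does. This gives the filtration $\mathcal A^i_{\{P'\},\varphi(\pi')}$ with $\ell'$ steps.

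\textbf{Step 2: the refined filtration on $G$.} On the split side, the quotients of the Franke filtration (computed in our earlier work on $GL_n$) are indexed by triples $(Q_j,\Sigma,\mu)$. The non-functoriality comes from the fact that the exponents $\mu$ of $G$ coming from $\mathbf{G}(\Pi')$ are interleaved with other exponents that have no preimage. I would refine the filtration by totally ordering the triples, choosing a linear refinement of Franke's partial order compatible with the $G'$-order pulled back via $\mathbf G$. Franke's argument that the filtration steps are $G(\mathbb A_f)\times(\mathfrak g,K)$-stable only uses that each step is closed under taking constant terms with smaller exponents. So any linear extension of the order on exponents is again a filtration by submodules, and its quotients are the single summands $\mathrm{Ind}(\Sigma\otimes S)_\mu$. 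I expect the main obstacle here: proving that this refinement still has quotients described exactly as induced modules. This needs the analytic input that the derivatives of the residual Eisenstein series at distinct exponents of the same norm give independent contributions. I would attack it with Franke's description via the Fourier coefficients and holomorphy of the relevant normalized Eisenstein series, which is known for $GL_n$ by Moeglin--Waldspurger and Hanzer--Muić.

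\textbf{Step 3: defining $\phi$ and $\mathbf G$ on quotients.} For each $G'$-triple $(P'_i,\Pi',\lambda)$, set $\phi(i)$ to be the index of the $G$-triple $(Q_j,\mathbf G(\Pi'),\lambda)$. Here $Q_j$ is the parabolic of $G$ with Levi $\prod GL_{m_id}$; the space $\check{\mathfrak a}_{P'_i}=\check{\mathfrak a}_{Q_j}$ is identified and $\lambda$ is transported, with $\mathbf G(\Pi')$ discrete by Badulescu. Then I define
$$\mathbf G\Bigl(\mathrm{Ind}_{P'_i(\mathbb A)}^{G'(\mathbb A)}(\Pi'\otimes S(\check{\mathfrak a}_{P'_i}))_\lambda\Bigr)=\mathrm{Ind}_{Q_j(\mathbb A)}^{G(\mathbb A)}(\mathbf G(\Pi')\otimes S(\check{\mathfrak a}_{Q_j}))_\lambda .$$
Monotonicity of $\phi$ follows because the refined order on $G$ was chosen to extend the pulled-back order. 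Injectivity follows from injectivity of Badulescu's map, with the refinement making it strict.
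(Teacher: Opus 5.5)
Your proposal has two genuine gaps. The first is that Step 2 misidentifies the obstruction, so the proposed fix does not work. Interleaving of transferred and non-transferred exponents is harmless. Once one knows that $\iota'(\underline{z}_1')\succ\iota'(\underline{z}_2')$ if and only if $\iota(\mathbf{G}(\underline{z}_1'))\succ\iota(\mathbf{G}(\underline{z}_2'))$, a suitable choice of Franke's function $T$ handles it. You use this equivalence without proof when you speak of an order ``compatible with the pulled-back order''; it needs a short check on partial sums.

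The real problem is that a transferred triple and a non-transferred triple can have the \emph{same} exponent. Take $d=2$, $\mathbf{G}(\rho')=MW(\rho,2)$ and $\mathbf{G}(\tau')=\tau$ cuspidal. The triples $(P,\tau\otimes MW(\rho,2)\otimes\tau,(1/2,0,-1/2))$ and $(S,\rho\otimes MW(\tau,2)\otimes\rho,(1/2,0,-1/2))$ both give $\iota(\underline{z})=(1/2,1/2,0,0,0,0,-1/2,-1/2)$, but only the first is a transfer. Franke's quotients are indexed by the values of $T$ on the set of exponents. Hence no linear extension of the order on exponents separates these two triples, and your justification for ``totally ordering the triples'' does not apply.

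The analytic input you propose does not help either. Distinct incomparable exponents can already be placed in different steps by Franke's theorem, and coinciding exponents are not addressed by it. The missing idea is purely formal. Isomorphisms in the groupoid $\mathcal{M}$ are block permutations, and these preserve the segment lengths that characterize the image of $\mathbf{G}$. So there are no isomorphisms between transferred and non-transferred triples. The colimit at such a level is therefore the direct sum of the colimit over transferred triples and the colimit over the others, and you refine the filtration by inserting the preimage of one summand.

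The second gap is that your quotients are not single summands $\Ind(\Sigma\otimes S)_\mu$. They are colimits over groupoids of triples, with identifications by intertwining operators. Correspondingly, your $\phi$ is defined triple by triple rather than on filtration indices. To get the asserted isomorphism of the $i$-th quotient with the $\phi(i)$-th one, you must arrange three things:
\begin{enumerate}
\item the transfers of \emph{all} exponents at level $i$ on the $G'$-side lie in a single level on the $G$-side;
\item that level, after the splitting above, contains nothing else;
\item $\mathbf{G}$ is a functor of groupoids, with isomorphisms corresponding to the same permutations, so that it carries the colimit on the $G'$-side to the colimit over transferred triples.
\end{enumerate}
The paper achieves this by transferring the antichain partition $\mathcal{S}'=\bigcup_i\mathcal{S}'_i$ to $\mathcal{S}^{\mathbf{G}}=\bigcup_i\mathcal{S}^{\mathbf{G}}_i$. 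It then distributes the remaining exponents into interleaved pieces $\widetilde{\mathcal{S}}_{i,j}$ so that the resulting $T$ still respects $\succ$. Your proposal supplies no such construction, so $\phi$ is not well defined as a function on $\{0,1,\dots,\ell'\}$.
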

Let us elaborate a bit on the contents of the theorem. The crucial feature of our global Jacquet--Langlands
correspondence for spaces of automorphic forms is contained in the last sentence. The fact that the new
correspondence on the consecutive quotients of the filtration is obtained from the correspondence of the discrete spectra
of Levi factors in the inducing data actually means that our definition is precisely the desired extension
and generalization of the correspondence for the discrete automorphic spectrum of Badulescu \cite{badulescu:global-jl}.\\\\
The automorphic representations in our correspondence are the quotients of the
Franke filtration, which are most of the time not irreducible. The irreducible constituents of
these automorphic representations are unramified at almost all places. These unramified representations
are given by the Satake parameters arising via parabolic induction from the Satake
parameters of the unramified representation in the inducing data. Since our Jacquet--Langlands
correspondence is given by the Jacquet--Langlands correspondence for discrete spectrum applied
on the inducing data, the Satake parameters of the irreducible constituents match at almost
all places. This shows that our Jacquet--Langlands correspondence for spaces of automorphic
forms properly contains the functorial lift for all irreducible automorphic representations
predicted by Langlands functoriality as applied to the identity map between the $L$-groups of $G'$ and $G$.\\\\
On the other hand, our Jacquet--Langlands correspondence is not just a correspondence
which transfers abstract representations and verifies only {\it ex post} that they indeed admit automorphic realizations. Our Jacquet--Langlands correspondence is really a functorial lift of spaces of automorphic forms  themselves, that is, we work with the automorphic
realizations of the considered representations throughout. This fact is one of the crucial advantages
of the approach using the Franke filtration. The Jacquet--Langlands correspondence established in
this paper is the correspondence between quotients of spaces of automorphic forms that
are spanned by derivatives of degenerate Eisenstein series that can be explicitly described.
The fact that the representations in the correspondence are automorphic -- which is usually very difficult to establish! -- does hence not require any further
proof, as they are already realized in the spaces of automorphic forms.\\\\
As already mentioned, the theorems in the paper are more precise than the one stated here. First of all, the
Franke filtrations of $\mathcal{A}_{\{P'\},\varphi(\pi')}$ and $\mathcal{A}_{\{Q\},\varphi(\sigma)}$
are carefully chosen, so that the function $\phi$ can be explicitly calculated. A subtle point in the
construction is that in the filtration of $\mathcal{A}_{\{Q\},\varphi(\sigma)}$, as defined by Franke \cite{franke:filtration},
it can happen that one part of a quotient of the filtration is in the image of the Jacquet--Langlands
correspondence, while the other part of the same quotient is not in the image.
This cannot be avoided by choosing a different Franke filtration. Hence, we introduce
a functorial refinement of the Franke filtration in which
these parts of the same quotient are separated in two different quotients, and prove that it can always be achieved. Thus,
the filtration of $\mathcal{A}_{\{Q\},\varphi(\sigma)}$ that we define and use in the construction is strictly speaking not a
Franke filtration, and that is why we call it ``a functorial refinement of the Franke filtration'' in the theorem above.\\\\
The Franke filtration is defined in terms of certain groupoid of triples via a functor from the
groupoid to the category of representations of the given group. Our construction of the global
Jacquet--Langlands correspondence also establishes a correspondence between the groupoid of triples
for the general linear group and its inner form. We conclude this introduction by stating that another interesting property of our so established global Jacquet--Langlands correspondence is, that it is functorial in the sense that it commutes with the technically challenging functors
defining the Franke filtration.

\bigskip
\small
\noindent  {\it Acknowledgments:}
We are grateful to Ioan Baduescu for reading an earlier draft of the paper.
The final parts of the manuscript were written during our stay at the 
Erwin Schr\"{o}dinger Institute in Vienna. We sincerely appreciate its peaceful, yet stimulating atmosphere, as well as the warm hospitality of the staff.
\normalsize

\section{Preliminaries and notation}
\label{sect:preliminaries}

Let $F$ be an algebraic number field. Given a non-trivial place $v$ of $F$, we denote by $F_v$
the completion of $F$ at the place $v$. Let $\mathbb{A}$ be the ring of ad\`{e}les of $F$.
For any $F$-algebra $\mathcal{R}$, the matrix algebra of $r\times r$ matrices with entries in $\mathcal{R}$
is denoted by $M_r(\mathcal{R})$.

Let $D$ be a central division algebra over $F$ of degree $d>1$. We say that $D$ is split (or unramified) at a given place
$v$ of $F$, if
$$
D\otimes_F F_v \cong M_d(F_v).
$$
Otherwise, we say that $D$ is non-split (or ramified) at $v$. It is well-known that every $D$ splits at all but
finitely many places, and that the finite set $V_{\mathrm{ram}}$ of ramified places for $D$ is of even cardinality, cf.~\cite[Thm.~1.12]{platrap}.

Consider the matrix algebra $A'=M_n(D)$ of $n\times n$ matrices with entries in $D$. Given a place $v$ of $F$,
there exist a unique positive integer $r_v$ and a unique (up to isomorphism) division algebra $D_v$ of degree $d_v$ over $F_v$
such that $r_vd_v=nd$ and
$$
A_v':=A'\otimes_F F_v \cong M_{r_v}(D_v).
$$
Observe that if $v\not\in V_{\mathrm{ram}}$ is a place at which $D$ splits, then $d_v=1$ and $A_v'\cong M_{nd}(F_v)$.

The group of invertible elements in the algebra $A'$, viewed as an algebraic group defined over $F$, is denoted by $G_n'$,
and referred to as the general linear group over the division algebra $D$. Given a place $v$ of $F$, the group of $F_v$-rational
points of $G_n'$ is denoted by $G_n'(F_v)$. It is isomorphic to the group of invertible elements of the matrix algebra
$A_v'\cong M_{r_v}(D_v)$ in the notation as above, that is,
$$
G_n'(F_v)\cong GL_{r_v}(D_v).
$$
The group of ad\`{e}lic points of $G_n'$ is denoted by $G_n'(\mathbb{A})$.

In the case of $D=F$, i.e., the division algebra $D$ is a field, we have $d=1$, and the notation is modified by simply removing the prime sign.
Thus, we have $A=M_n(F)$, and the group of invertible elements of $A$, viewed as an algebraic group over $F$,
is denoted by $G_n$. It is the usual general linear group defined over the field $F$, so that, given a place $v$ of $F$,
$$
G_n(F_v)= GL_{n}(F_v),
$$
and the same holds for $G_n(\mathbb{A})$. The group $G_n'$ is an inner form of the general linear group $G_{nd}$.

We fix once and for all the choice of a minimal parabolic subgroup $P_0'$ (resp.~$P_0$) of $G_n'$ (resp.~$G_n$) defined over $F$, such that
they are the subgroups consisting of all upper-triangular matrices in some fixed basis of the underlying free $D$-modules (resp. $F$-vector spaces.)
Then, in both cases, the standard parabolic $F$-subgroups, i.e., those parabolic subgroups defined over $F$ that contain the fixed minimal one,
are in one-to-one correspondence with ordered partitions of $n$ into positive integers. Given such a partition $\underline{n}=(n_1,\dots ,n_k)$,
the corresponding standard parabolic subgroup $P_{\underline{n}}'$ (resp.~$P_{\underline{n}}$) consists of the block-upper-triangular
matrices in $G_n'$ (resp.~$G_n$) with blocks along the diagonal of sizes $n_1,\dots ,n_k$. The parabolic subgroup $P_{\underline{n}}'$ (resp.~$P_{\underline{n}}$)
exhibits the Levi decomposition with Levi factor
\begin{equation}\label{eq:M}
L_{\underline{n}}'\cong G_{n_1}'\times\dots\times G_{n_k}' \quad (\hbox{resp.~} L_{\underline{n}}\cong G_{n_1}\times\dots\times G_{n_k} ).
\end{equation}

Observe that, in both groups, two parabolic subgroups are associate if and only if the corresponding ordered partitions are
permutations of each other. In other words, the associate classes of parabolic subgroups are in one-to-one correspondence with
unordered partitions of $n$ into positive integers. All parabolic subgroups in this paper are standard, unless specified otherwise.

For all positive integers $n$, let $\nu$ denote both the character of $G_n'(\mathbb{A})$ given as the absolute value of the reduced norm and the character
of $G_n(\mathbb{A})$ given as the absolute value of the determinant.
For a given parabolic subgroup $P'$ of $G_n'$ (resp.~$P$ of $G_{n}$), let $X^\ast(P')$ (resp.~$X^\ast(P)$) be the $\mathbb{Z}$-module
of $F$-rational characters of $P'$ (resp.~$P$). Let $\check{\mathfrak{a}}_{P'}=X^\ast(P')\otimes_\mathbb{Z} \mathbb{R}$ and $\check{\mathfrak{a}}_{P}=X^\ast(P)\otimes_\mathbb{Z} \mathbb{R}$.
These are finite-dimensional real vector spaces, and we add subscript $\mathbb{C}$ to denote their complexifications.
If the parabolic subgroups $P'$ and $P$ both correspond to the ordered partition $\underline{n}=(n_1,\dots ,n_k)$ of $n$, then
these vector spaces are of dimension $k$. We fix the bases of $\check{\mathfrak{a}}_{P'}$ and $\check{\mathfrak{a}}_{P}$ and their complexifications in such a way that
the $k$-tuple $\underline{s}=(s_1,\dots ,s_k)$ of real or complex numbers corresponds to the character $\nu_{\underline{s}}$ given by the assignment
$$
(\ell_1, \ell_2,\dots ,\ell_k)\mapsto \nu (\ell_1)^{s_1} \nu(\ell_2)^{s_2} \dots  \nu(\ell_k)^{s_k},
$$
where $(\ell_1,\dots ,\ell_k)\in L'_{\underline{n}}(\mathbb{A})$ (resp.\ $(\ell_1,\dots ,\ell_k)\in L_{\underline{n}}(\mathbb{A})$),
with $\ell_j\in G_{n_j}'(\mathbb{A})$ (resp.\  $\ell_j\in G_{n_j}(\mathbb{A})$), according to \eqref{eq:M}.
Let $\check{\mathfrak{a}}_{P'}^{G_n'}$ and $\check{\mathfrak{a}}_{P}^{G_n}$
denote the subspaces consisting of characters that are trivial on the center. In the coordinates fixed above,
these are given by the condition
$$
n_1s_1+n_2s_2+\dots + n_ks_k=0,
$$
and are of dimension $k-1$, which is the corank of the parabolic subgroups $P'$ and $P$.
The closure of the positive Weyl chamber in $\check{\mathfrak{a}}_{P'}^{G_n'}$ and $\check{\mathfrak{a}}_{P}^{G_n}$ is given
by the condition
$$
s_1\geq s_2\geq \dots\geq s_k
$$
in coordinates as above.

For any given parabolic subgroup $P'$ of $G_n'$ (resp.~$P$ of $G_n$), there is a canonical inclusion of the
space $\check{\mathfrak{a}}_{P'}^{G_n'}$ into $\check{\mathfrak{a}}_{P_0'}^{G_n'}$
(resp.~$\check{\mathfrak{a}}_{P}^{G_n}$ into $\check{\mathfrak{a}}_{P_0}^{G_n}$), given
by restriction of characters. We denote this inclusion by the same letter $\iota'$ (resp.~$\iota$),
regardless of the parabolic subgroup $P'$ (resp.~$P$) to which it refers.
Explicitly in coordinates, if $P'$ and $P$ both correspond to the ordered partition $(n_1,\dots ,n_k)$ of $n$,
then
$$
\iota'(\underline{s})=(s_1,\dots ,s_1,s_2,\dots ,s_2,\dots ,s_k,\dots ,s_k)\in\check{\mathfrak{a}}_{P_0'}^{G_n'},
$$
$$
\iota(\underline{s})=(s_1,\dots ,s_1,s_2,\dots ,s_2,\dots ,s_k,\dots ,s_k)\in\check{\mathfrak{a}}_{P_0}^{G_n},
$$
where $s_j$ appears $n_j$ times on the right-hand sides, and
$\underline{s}=(s_1,\dots ,s_k)$ is viewed as an element in $\check{\mathfrak{a}}_{P'}^{G_n'}$ in the first line,
and as an element of $\check{\mathfrak{a}}_{P}^{G_n}$ in the second line.

A certain partial order on $\check{\mathfrak{a}}_{P_0'}^{G_n'}$ and $\check{\mathfrak{a}}_{P_0}^{G_n}$, denoted by $\succ$,
plays an important role in the definition of the Franke filtration below. In the general setting, it is defined
in \cite[p.~233]{franke:filtration}, and in the case of the general linear group in \cite[Eq.~(2.2)]{grbac-grobner:franke-gln}.
For the inner form of the general linear group the same inequalities apply. More precisely, for given elements
$\underline{s}=(s_1,\dots ,s_n)$ and $\underline{t}=(t_1,\dots ,t_n)$ of either $\check{\mathfrak{a}}_{P_0'}^{G_n'}$ or $\check{\mathfrak{a}}_{P_0}^{G_n}$,
we have that $\underline{s}\succ\underline{t}$ if and only if $\underline{s}\neq\underline{t}$ and the system of inequalities
\begin{align*}
  s_1 &\leq t_1 \\
  s_1+s_2 &\leq t_1+t_2 \\
   &\dots  \\
  s_1+s_2+\dots +s_{n-1} &\leq t_1+t_2+\dots +t_{n-1}
\end{align*}
is satisfied.

The Weyl group $W$ of $G_n'$ and $G_n$ is isomorphic to the symmetric group of $n$ letters. A permutation in $W$ acts on the maximal split torus
by permuting its factors. Given a parabolic subgroup $P'$ (resp.~$P$) of $G_n'$ (resp.~$G_n$), with the Levi factor $L'$ (resp.~$L$),
let $W_{L'}$ (resp.~$W_{L}$) denote the Weyl group of the Levi factor. Let $W^{P'}$ (resp.~$W^P$) denote the set of minimal coset
representatives for the right cosets in $W_{L'}\backslash W$ (resp.~$W_L\backslash W$). Let $W(L')$ (resp.~$W(L)$) denote the
set of $w\in W^{P'}$ (resp.~$w\in W^P$) such that the conjugate of $L'$ (resp.~$L$) by $w$ is a standard Levi subgroup.
If $P'$ and $P$ correspond to the same ordered partition $\underline{n}=(n_1,\dots ,n_k)$ of $n$, the $W(L')$ and $W(L)$ can
be both identified with the set of permutations of $k$ letters. A permutation in $W(L')$ and $W(L)$ acts on $L'$ and $L$
by permuting the diagonal blocks.

In the discussion above, a certain fixed choice of a maximal compact subgroup $K'$ (resp.~$K$) of $G_n'(\mathbb{A})$ (resp.~$G_n(\mathbb{A})$)
is implicitly assumed, as in \cite[Sect.~I.1.4]{moeglin-waldspurger:book}, or \cite[Sect.\ 9.2]{grobner:book}.
In particular, it is in good position with respect to the fixed choice $P_0'$ (resp.~$P_0$) of a minimal parabolic subgroup.

As often in the theory of automorphic forms,
we work with automorphic forms that are finite for the action of the fixed maximal compact subgroup,
as in \cite{borel-jacquet:corvallis}. Hence, the spaces of automorphic forms considered here are not representations of the full group
$G_n'(\mathbb{A})$ and $G_n(\mathbb{A})$. Nevertheless, we abuse the language, and refer to such spaces
as (automorphic) representations of $G_n'(\mathbb{A})$ and $G_n(\mathbb{A})$. The reader, who is interested in an extension of this theory, which in fact provides actual continuous representations of groups of ad\`{e}lic points, is referred to \cite{grobner:book} and \cite{grobner-zunar:smooth-auto-forms}.

\section{The global Jacquet--Langlands correspondence for the discrete spectrum}
\label{sect:disc-spec-gln}

In this section we recall the main global results of Badulescu \cite{badulescu:global-jl} and
Badulescu--Renard \cite{badulescu-renard:global-jl-arcimedean}, in which the global Jacquet--Langlands
correspondence for the discrete spectra of $G_n'(\mathbb{A})$ and $G_n(\mathbb{A})$ is
established. The former paper is written under the assumption that $D$ splits at
all archimedean places, and the latter removes that assumption. We avoid here all the
technical details required in the construction of the global Jacquet--Langlands correspondence,
and only state the results in the form relevant for this paper. In particular,
we do not explain the local Jacquet--Langlands correspondence for unitary representations,
the crucial local ingredient in the construction of Badulescu and Badulescu--Renard,
and refer the interested reader to \cite{badulescu:global-jl} and \cite{badulescu-renard:global-jl-arcimedean}.
However, for convenience of the reader familiar with the work of Badulescu and Renard, whenever
possible we use their notation and terminology of \cite{badulescu-renard:global-jl-arcimedean}.

We begin by recalling the classification of the discrete spectrum of $G_n(\mathbb{A})$ obtained
by M\oe glin and Waldspurger \cite{moeglin-waldspurger:gln}. Given a cuspidal automorphic representation
$\rho$ of $G_m(\mathbb{A})$, and a positive integer $k$ such that $mk=n$, the parabolically induced
representation
$$
{\Ind}_{P_{(m,m,\dots ,m)}(\mathbb{A})}^{G_n(\mathbb{A})}
\left(
\rho\nu^{\frac{k-1}{2}}\otimes\rho\nu^{\frac{k-3}{2}}\otimes \dots \otimes \rho\nu^{-\frac{k-1}{2}}
\right)
$$
has a unique irreducible quotient. Following \cite{badulescu:global-jl}, we denote that quotient by $MW(\rho,k)$.
The main result of \cite{moeglin-waldspurger:gln} says that $MW(\rho,k)$ is isomorphic to a residual automorphic representation
of $G_n(\mathbb{A})$ if $k>1$, and all residual representations of $G_n(\mathbb{A})$ arise in this way.
In the remaining case of $k=1$, we have $MW(\rho,1)=\rho$, which gives all the cuspidal automorphic representations
of $G_n(\mathbb{A})$.

A segment of cuspidal automorphic representations is a tensor product of the form
$$
\Delta = \Delta (\rho ;a,b) =\rho\nu^a\otimes\rho\nu^{a+1}\otimes \dots \otimes \rho\nu^b,
$$
where $\rho$ is a cuspidal automorphic representation of $G_m(\mathbb{A})$, and $a$ and $b$
are real numbers such that $\ell(\Delta)=b-a+1$ is a positive integer called the length of $\Delta$.
The segment $\Delta$ as above is the cuspidal support of the twisted discrete spectrum representation
$$
MW(\rho,b-a+1)\nu^{\frac{a+b}{2}}.
$$
Conversely, given a twisted discrete spectrum representation
$$
MW(\rho,k)\nu^s,
$$
where $\rho$ is a cuspidal automorphic representation of $G_m(\mathbb{A})$,
$k$ is a positive integer, and $s$ a real number, then its cuspidal support is
the segment
$$
\Delta\left(\rho ; s-\frac{k-1}{2},s+\frac{k-1}{2}\right).
$$
In other words, there is a bijection between all segments and all twisted discrete spectrum representations
of $G_n(\mathbb{A})$ for varying positive integer $n$.

The analogue of the M\oe glin--Waldspurger classification in the case of inner forms is obtained in the
course of establishing the global Jacquet--Langlands correspondence in \cite{badulescu:global-jl},
\cite{badulescu-renard:global-jl-arcimedean}.\footnote{The distinction of the residual representations
in the discrete spectrum of inner forms is the subject of the appendix by N.~Grbac in \cite{badulescu:global-jl}.}
We follow the exposition of \cite[Sect.~18]{badulescu-renard:global-jl-arcimedean}.
The global Jacquet--Langlands correspondence is an injective map, denoted by $\mathbf{G}$ as in \emph{loc.~cit.},
from the set of irreducible summands in the discrete spectrum of any of the groups $G_n'(\mathbb{A})$ with $n\geq 1$ into the set
of irreducible summands in the discrete spectrum of $G_{nd}(\mathbb{A})$, where $d$ is the degree of
the division algebra $D$ over $F$. The image $\mathbf{G}(\sigma')$ of
an irreducible summand $\sigma'$ of the discrete spectrum of $G_n'(\mathbb{A})$ is referred to as
the Jacquet--Langlands transfer of $\sigma'$.

Given a cuspidal automorphic representation $\rho$ of $G_m(\mathbb{A})$, there exists a unique positive
integer $k_\rho$ such that the representation $MW(\rho,k)$ is in the image of the Jacquet--Langlands
correspondence $\mathbf{G}$ if and only if $k_\rho$ divides $k$, cf.~\cite{badulescu:global-jl},
\cite{badulescu-renard:global-jl-arcimedean}.
This fact describes the image of the Jacquet--Langlands correspondence.
Moreover, $k_\rho$ divides $d$.

Let $\rho'$ be the irreducible representation in the discrete spectrum of $G_{m'}'(\mathbb{A})$ such that
$\mathbf{G}(\rho')=MW(\rho,k_\rho)$. In particular, we have $m'd=mk_\rho$. Then, $\rho'$ is
a cuspidal automorphic representation of $G_{m'}'(\mathbb{A})$, and all cuspidal automorphic representations
of $G_{m'}'(\mathbb{A})$ are of such form for an appropriate $\rho$, cf.~\cite{badulescu:global-jl},
\cite{badulescu-renard:global-jl-arcimedean}.

Given a positive integer $k'$, such that $m'k'=n$, the parabolically induced
representation
$$
{\Ind}_{P'_{(m',m',\dots ,m')}(\mathbb{A})}^{G_n'(\mathbb{A})}
\left(
\rho'\nu^{k_\rho\frac{k'-1}{2}}\otimes\rho'\nu^{k_\rho\frac{k'-3}{2}}\otimes \dots \otimes \rho'\nu^{-k_\rho\frac{k'-1}{2}}
\right)
$$
has a unique irreducible quotient denoted by $MW'(\rho' ,k')$. As in the case of $G_n(\mathbb{A})$, the representation
$MW'(\rho' ,k')$ is isomorphic to a residual automorphic representation of $G_n'(\mathbb{A})$ if $k'>1$, and all residual
representations of $G_n'(\mathbb{A})$ arise in this way. If $k'=1$, then $MW'(\rho',1 )=\rho'$ which gives all cuspidal
automorphic representations of $G_n'(\mathbb{A})$. See the appendix in \cite{badulescu:global-jl}.

Then, the Jacquet--Langlands correspondence is given explicitly by the assignment
$$
\mathbf{G}
\big(
MW'(\rho',k')
\big)=
MW(\rho,k),
$$
where $k=k_\rho k'$, and $\rho$ and $\rho'$ are related by the condition $\mathbf{G}(\rho')=MW(\rho,k_\rho)$ as above.

The notion of segments can be introduced in the case of inner forms. A segment for $G_n'$ is
a tensor product of cuspidal automorphic representations of the form
$$
\Delta' = \Delta' (\rho' ;a,b) =\rho'\nu^{k_\rho a}\otimes\rho'\nu^{k_\rho(a+1)}\otimes \dots \otimes \rho'\nu^{k_\rho b},
$$
where $a$ and $b$ are real numbers such that $b-a+1$ is a positive integer,
$\rho'$ is a cuspidal automorphic representation of $G_{m'}'(\mathbb{A})$, and
$\rho$ and $k_\rho$ are related to $\rho'$ by the condition $\mathbf{G}(\rho')=MW(\rho,k_\rho)$ as above. Then $\Delta'$ is the cuspidal
support of the representation
$$
MW'(\rho',b-a+1)\nu^{k_\rho\frac{a+b}{2}},
$$
and conversely, given a twisted discrete spectrum representation
$$
MW'(\rho',k)\nu^{k_\rho s},
$$
its cuspidal support is the segment
$$
\Delta'\left( \rho'; s-\frac{k-1}{2}, s+\frac{k-1}{2}\right).
$$
Hence, there is a bijection between all segments for $G_n'$ with varying $n\geq 1$,
and the twisted discrete spectrum representations of all $G_n'(\mathbb{A})$.

\section{The Franke filtration for inner forms of the general linear group}
\label{sect:filtr-defn}

The Franke filtration is originally defined in terms of certain triples \cite{franke:filtration}, \cite{franke-schwermer:decomp-aut-forms}.
Due to the results of \cite{moeglin-waldspurger:gln}, recalled in Sect.~\ref{sect:disc-spec-gln},
in the case of the general linear group
these triples can be combinatorially described in terms
of segments. The definition of the filtration in the case of the general linear
group is given in \cite[Sect.~2]{grbac-grobner:franke-gln}, and the
combinatorial description of the triples in \cite[Lemma 3.1]{grbac-grobner:franke-gln}.
The Franke filtration, as well as its combinatorial description in terms of segments,
can be extended to the case of inner forms of the general linear group. That is
the subject of this section. In addition, we carefully fix certain convenient choices in
the definition of the Franke filtration for the inner form,
so that the description of the Jacquet--Langlands correspondence below becomes feasible.

Let $G_n'$ be the inner form of the general linear group, as in Sect.~\ref{sect:preliminaries}.
We fix the cuspidal support as the associate class represented by a cuspidal automorphic representation
$$
\pi'\cong \rho_1'\nu^{s_1} \otimes \rho_2'\nu^{s_2} \otimes\dots \otimes \rho_l'\nu^{s_l},
$$
of the Levi factor $L'(\mathbb{A})$ of the parabolic subgroup $P'(\mathbb{A})$,
where $\rho_j'$ is a cuspidal automorphic representation of $G_{m_j'}'(\mathbb{A})$,
with $\sum_{j=1}^l m_j'=n$, and the exponents $s_1, \dots , s_l$ are real numbers.
Note that $P'$ is the parabolic subgroup corresponding to the ordered partition $(m_1',\dots ,m_l')$ of $n$.
Without loss of generality, we may and will assume that the representative is chosen in such
a way that $s_1\geq s_2\geq \dots \geq s_l$. Recall also that we assume $\sum_{j=1}^l m_j's_j = 0$.

Let $\mathcal{A}_{\{P'\},\varphi(\pi')}$ denote the space of automorphic forms
on $G_n'(\mathbb{A})$ with cuspidal support in the associate class $\varphi(\pi')$,
represented by $\pi'$, of cuspidal automorphic representations of the Levi factors
of parabolic subgroups in the associate class $\{P'\}$, represented by $P'$.
See \cite[Sect.~2]{grbac-grobner:franke-gln} for a precise definition
that applies to inner forms as well. Recall that two parabolic subgroups
are associate if their Levi factors are conjugate by an element of the Weyl group.
Two representations
of the Levi factors of associate parabolic subgroups are associate if they are
conjugate by the same Weyl group element.

In what follows, the cuspidal support $\{P'\}$ and $\varphi(\pi')$ is fixed. Hence,
we omit the reference to the cuspidal support in notation, except at the first mention
of a new object.

The triples for $\mathcal{A}_{\{P'\},\varphi(\pi')}$ are defined as follows.
Let $\mathcal{M}'=\mathcal{M}_{\{P'\},\varphi(\pi')}$ be the set of triples $(R',\Pi',\underline{z}')$ where
\begin{itemize}
  \item $R'$ is a parabolic subgroup of $G_n'$ containing an element of the associate class $\{P'\}$,
  \item $\Pi'$ is a unitary discrete spectrum representation of the Levi factor $L_{R'}(\mathbb{A})$,
  \item $\underline{z}'$ is an element of $\check{\mathfrak{a}}_{R'}^{G_n'}$ in the closure of the positive Weyl chamber for $R'$,
\end{itemize}
such that the cuspidal support of $\Pi'$ twisted by the character corresponding to $\underline{z}'$
is in the associate class $\varphi(\pi')$.
The structure of a groupoid
on $\mathcal{M}'$ is defined by taking the triples as objects,
and an isomorphism from a triple $(R',\Pi',\underline{z}')$ to $(S',\Sigma',\underline{\zeta}')$
is an element $w\in W(L_{R'})$ such that the conjugate action of $w$ transforms $L_{R'}$ into $L_{S'}$,
$\Pi'$ into $\Sigma'$, and $\underline{z}'$ into $\underline{\zeta}'$.
Recall that $w\in W(L_{R'})$ can be viewed as a permutation that acts on the Levi factor $L_{R'}$
by permuting the diagonal blocks, on the discrete spectrum representation $\Pi'$ by permuting factors in the
tensor product, and on elements of $\check{\mathfrak{a}}_{R'}^{G_n'}$ by permuting the coordinates.

Consider the set $\mathcal{S}'=\mathcal{S}_{\{P'\},\varphi(\pi')}$, consisting of all $\iota'(\underline{z}')$,
where $\underline{z}'$ ranges over the third entry of all triples in $\mathcal{M}'$, and $\iota'$ is the
inclusion introduced in Sect.~\ref{sect:preliminaries}.
It is a finite subset of $\check{\mathfrak{a}}_{P_0'}^{G_n'}$. The decisive ingredient
of the Franke filtration is a choice of a function $T'=T_{\{P'\},\varphi(\pi')}$ on the
set $\mathcal{S}'$, taking integer values, and compatible with the partial
order $\succ$ defined on $\check{\mathfrak{a}}_{P_0'}^{G_n'}$ in Sect.~\ref{sect:preliminaries}.
In other words, for $\iota'(\underline{z}')$ and $\iota'(\underline{\zeta}')$ in $\mathcal{S}'$,
we have
\begin{equation}\label{eq:T-inequality}
  T'(\iota'(\underline{z}'))>T'(\iota'(\underline{\zeta}')),
\end{equation}
whenever $\iota'(\underline{z}')\succ \iota'(\underline{\zeta}')$.

The Franke filtration of the space $\mathcal{A}_{\{P'\},\varphi(\pi')}$ is a descending filtration
$$
\dots \supseteq \mathcal{A}_{\{P'\},\varphi(\pi')}^i\supseteq \mathcal{A}_{\{P'\},\varphi(\pi')}^{i+1} \supseteq\dots ,
$$
indexed by integers, where the structure of the consecutive quotients of the filtration is described in terms of induced representations as
\begin{equation}\label{eq:filt-qts}
\mathcal{A}_{\{P'\},\varphi(\pi')}^i \slash \mathcal{A}_{\{P'\},\varphi(\pi')}^{i+1}
\cong
\colim_{\substack{(R',\Pi',\underline{z}')\in\mathcal{M}' \\ T'(\iota'(\underline{z}'))=i}}
\left(
{\Ind }_{R'(\mathbb{A})}^{G_n'(\mathbb{A})}\left(\Pi'\otimes\nu_{\underline{z}'}\right)
\otimes S(\check{\mathfrak{a}}_{R',\mathbb{C}}^{G_n'})
\right),
\end{equation}
where ${\Ind}$ stands for the normalized parabolic induction, $S(\check{\mathfrak{a}}_{R',\mathbb{C}}^{G_n'})$
denotes the symmetric algebra associated with the vector space $\check{\mathfrak{a}}_{R',\mathbb{C}}^{G_n'}$, and
the colimit is taken with respect to the functor $M'=M_{\{P'\},\varphi(\pi')}$ from the groupoid $\mathcal{M}'$ of triples to the category of
representations of $G_n'(\mathbb{A})$, as in \cite{franke:filtration} and \cite[Sect.~2]{grbac-grobner:franke-gln}.
The notion of a representation of $G_n'(\mathbb{A})$ is understood in the sense of Sect.~\ref{sect:preliminaries}.
The filtration is finite, as only finitely many quotients are non-trivial.

The functor $M'$ is defined on the objects of $\mathcal{M}'$ by
\begin{equation}\label{eq:functor}
  M'\big((R',\Pi',\underline{z}')\big) = {\Ind }_{R'(\mathbb{A})}^{G_n'(\mathbb{A})}\left(\Pi'\otimes\nu_{\underline{z}'}\right)
\otimes S(\check{\mathfrak{a}}_{R',\mathbb{C}}^{G_n'}).
\end{equation}
For an isomorphism $w\in W(L_{R'})$ from $(R',\Pi',\underline{z}')$ to $(S',\Sigma',\underline{\zeta}')$,
the action $M'(w)$ of the functor $M'$ is the intertwining operator between the two parabolically induced representations.
The precise definition is given in \cite[page 234]{franke:filtration}.

So far everything is in complete analogy to the case of the general linear group studied in \cite{grbac-grobner:franke-gln}.
At this point we specify a convenient choice of $T'=T_{\{P'\},\varphi(\pi')}$ to be used below in the construction
of the Jacqeut--Langlands correspondence of $\mathcal{A}_{\{P'\},\varphi(\pi')}$.

\begin{prop}\label{prop:partition-of-S-inner-form}
Let $\{P'\}$ and $\varphi(\pi')$ be a fixed cuspidal support as above. The finite set $\mathcal{S}'=\mathcal{S}_{\{P'\},\varphi(\pi')}$
admits a unique ordered partition
$$
\mathcal{S}'=\bigcup_{i=0}^{\ell'}\mathcal{S}'_i
$$
in disjoint non-empty subsets $\mathcal{S}'_i$ such that
\begin{enumerate}
\item[(i$\,'$)] the elements in the same subset $\mathcal{S}'_i$ are incomparable,
\item[(ii$\,'$)] for $0\leq i_0<i\leq \ell'$, elements $\iota'(\underline{z}')$ in $\mathcal{S}'_{i}$ and
$\iota'(\underline{z}'_0)$ in $\mathcal{S}'_{i_0}$ are either incomparable, or $\iota'(\underline{z}')\succ \iota'(\underline{z}'_0)$, and
\item[(iii$\,'$)] for every given element $\iota'(\underline{z}'_0)$ in $\mathcal{S}'_{i_0}$, and
for every integer $i$ such that $i_0<i\leq\ell'$,
there exists $\iota'(\underline{z}')$ in $\mathcal{S}'_i$ such that $\iota'(\underline{z}')\succ \iota'(\underline{z}'_0)$,
\end{enumerate}
in the partial order $\succ$ on
$\check{\mathfrak{a}}_{P_0'}^{G_n'}$ defined in Sect.~\ref{sect:preliminaries}. See Figure \ref{fig:partition-inner-form}.
\end{prop}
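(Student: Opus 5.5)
The statement is purely order-theoretic. It uses only two facts: $\mathcal{S}'$ is finite, and $\succ$ is a strict partial order on $\check{\mathfrak{a}}_{P_0'}^{G_n'}$. The second is immediate from the definition in Sect.~\ref{sect:preliminaries}: $\underline{s}\succ\underline{t}$ means $\underline{s}\neq\underline{t}$ together with the partial-sum inequalities. Transitivity follows by adding inequalities. Antisymmetry holds because both sets of inequalities together force equality of all partial sums, hence $\underline{s}=\underline{t}$ (using that the total sums vanish). The plan is to grade $\mathcal{S}'$ by the length of the longest chain going \emph{upwards}, and to number the layers from the top.

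For $x\in\mathcal{S}'$, let $d(x)$ be the largest integer $d\geq 0$ for which there is a chain $x\prec x_1\prec\dots\prec x_d$ in $\mathcal{S}'$. Here $y\prec x$ means $x\succ y$. This is finite because $\mathcal{S}'$ is finite. Put $\ell'=\max_{x\in\mathcal{S}'}d(x)$ and $\mathcal{S}'_i=\{x : d(x)=\ell'-i\}$.

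\emph{Existence.} First, each $\mathcal{S}'_i$ is non-empty. Take $x$ with $d(x)=\ell'$ and a maximal chain $x\prec x_1\prec\dots\prec x_{\ell'}$. Concatenating chains gives $d(x_j)\geq \ell'-j$, and $d(x_j)\leq\ell'-j$ since otherwise $d(x)$ would exceed $\ell'$. Hence $x_j\in\mathcal{S}'_j$ for every $j$.
\begin{itemize}
\item[(i$\,'$)] If $y\succ x$, then $d(x)\geq d(y)+1$. So comparable elements lie in different layers, and the layer of the larger one has the larger index.
\item[(ii$\,'$)] Suppose $x\in\mathcal{S}'_i$ and $x_0\in\mathcal{S}'_{i_0}$ with $i>i_0$. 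By the previous point, $x_0\succ x$ would force $i_0>i$, which is impossible. So the two are either incomparable or satisfy $x\succ x_0$.
\item[(iii$\,'$)] Given $x_0\in\mathcal{S}'_{i_0}$, take a longest upward chain from $x_0$, of length $\ell'-i_0$. The same sandwich argument as above shows that its $j$-th element lies in $\mathcal{S}'_{i_0+j}$. This supplies an element above $x_0$ in every layer $i$ with $i_0<i\leq\ell'$.
\end{itemize}

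\emph{Uniqueness.} Let $\mathcal{S}'=\bigcup_{i=0}^{L}\mathcal{T}_i$ be any ordered partition satisfying (i$\,'$)--(iii$\,'$). I will show that every $x\in\mathcal{T}_{i_0}$ has $d(x)=L-i_0$. This forces $L=\ell'$, since some $\mathcal{T}_0$ is non-empty and has $d=L$, while no element has $d>L$. It also forces $\mathcal{T}_i=\mathcal{S}'_i$.
\begin{itemize}
\item The lower bound $d(x)\geq L-i_0$ comes from (iii$\,'$). Iterating it, choose $x_{i_0+1}\succ x$ in $\mathcal{T}_{i_0+1}$, then $x_{i_0+2}\succ x_{i_0+1}$ in $\mathcal{T}_{i_0+2}$, and so on, up to layer $L$.
\item For the upper bound, observe that (i$\,'$) and (ii$\,'$) together say: if $y\succ x$ with $x\in\mathcal{T}_a$ and $y\in\mathcal{T}_b$, then $b>a$. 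Indeed, $b=a$ contradicts (i$\,'$), and $b<a$ contradicts (ii$\,'$) applied with the roles of the indices swapped. Hence along any upward chain from $x$ the layer index strictly increases. A chain starting in $\mathcal{T}_{i_0}$ therefore has length at most $L-i_0$.
\end{itemize}

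The only point needing care is this last observation that the layer index is strictly monotone along chains, combined with the sandwich argument for longest chains; everything else is routine. I do not expect a genuine obstacle, since the proposition is a standard graded decomposition of a finite poset by co-height.
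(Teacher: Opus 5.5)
Your proof is correct and produces the same partition as the paper; indeed, the $k$-th batch of maximal elements peeled off in the paper's construction is exactly $\{x : d(x)=k\}$. The route, however, is different.

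The paper builds the layers greedily. It repeatedly removes all maximal elements of what remains and then reverses the order. It proves (iii$\,'$) first for consecutive indices (an element not picked at a given stage must lie below something picked at that stage) and then extends it by transitivity. Uniqueness is proved by comparing two admissible partitions layer by layer from the top.

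You instead give a closed formula for the layer of each element through the co-height $d(x)$. Then (i$\,'$)--(iii$\,'$) reduce to two observations: $d$ drops strictly along $\succ$, and a longest upward chain from $x$ passes through every level $d(x),d(x)-1,\dots,0$ exactly once. Uniqueness becomes the statement that in any admissible partition the index of the layer containing $x$ is forced to be $L-d(x)$.

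Your version gives an intrinsic description of the layers. It also handles the a priori possibility $L\neq\ell'$ explicitly, which the paper's top-down comparison leaves implicit. And it yields directly that $\ell'+1$ is the maximal length of a chain in $\mathcal{S}'$, which is precisely the ``shortest possible'' assertion of Theorem \ref{thm:franke-inner-form}.

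The paper's greedy construction is more directly algorithmic. That fits how it is reused in Step 4: the downward inductive construction in Proposition \ref{prop:partition-of-tilde-S}, and the refinement of the $\widetilde{\mathcal{S}}_i$ ``by the same procedure''. Your argument applies verbatim to that refinement as well.
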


\begin{figure}
  \centering
  \includegraphics[scale=0.72]{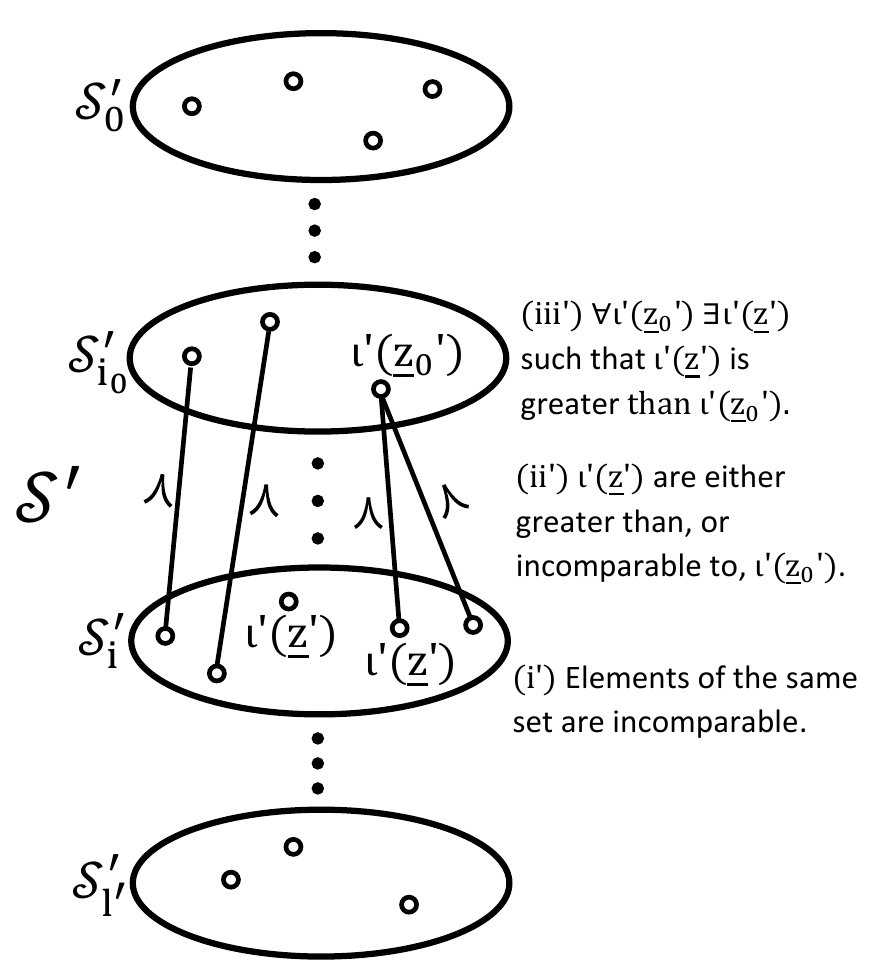}
  \caption{Partition of $\mathcal{S}'$ as in Proposition \ref{prop:partition-of-S-inner-form}}\label{fig:partition-inner-form}
\end{figure}

\begin{proof}
The construction of subsets $\mathcal{S}'_i$ is inductive, but we first obtain them in the reverse order than what is required.
It is a standard construction of a partition in antichains of a finite partially ordered set, and we recall it here
to fix the choices. Let
$\mathcal{S}^{\mathrm{aux}}_0$
be the set of all maximal elements in the partial order $\succ$ on $\mathcal{S}'$.
Since the set $\mathcal{S}'$ is non-empty and finite, $\mathcal{S}^{\mathrm{aux}}_0$ is non-empty.
Suppose that $i>0$ is an integer, and the non-empty subsets $\mathcal{S}^{\mathrm{aux}}_j$ are constructed for $0\leq j\leq i-1$.
If
$$
\bigcup_{j=0}^{i-1} \mathcal{S}^{\mathrm{aux}}_j = \mathcal{S}',
$$
the construction is finished and we have constructed the auxiliary partition of $\mathcal{S}'$.
Otherwise, a non-empty set $\mathcal{S}^{\mathrm{aux}}_i$ is defined as the set of all maximal elements in the
partial order $\succ$ on the non-empty finite set
$$
\mathcal{S}' \setminus \bigcup_{j=0}^{i-1} \mathcal{S}^{\mathrm{aux}}_j.
$$
In a finite number of steps, we obtain the auxiliary partition
$$
\mathcal{S}' = \bigcup_{i=0}^{\ell'} \mathcal{S}^{\mathrm{aux}}_i
$$
of the set $\mathcal{S}'$ in $\ell'+1$ subsets. The partition required in the proposition
is then simply defined by
$$
\mathcal{S}'_i=\mathcal{S}^{\mathrm{aux}}_{\ell'-i},
$$
for $0\leq i\leq \ell'$.
We claim that
this partition satisfies (i$\,'$)-(iii$\,'$) of the proposition.

By the construction using remaining maximal elements in each step, it is clear that (i$\,'$) holds,
because if two elements of the same subset $\mathcal{S}'_i$ were comparable, then one of them would not be
a maximal element in the appropriate set.

Suppose that (ii$\,'$) does not hold, that is, there exist
elements $\iota'(\underline{z}')$ in $\mathcal{S}'_i$ and
$\iota'(\underline{z}'_0)$ in $\mathcal{S}'_{i_0}$ such that $0\leq i_0<i\leq\ell'$ and
$\iota'(\underline{z}')\prec \iota'(\underline{z}'_0)$. But then, $\iota'(\underline{z}')$
is not a maximal element in the appropriate set, because $\iota'(\underline{z}'_0)$ is greater,
so we obtain a contradiction.

Finally, we first prove (iii$\,'$) for $i=i_0+1$. Let $\iota'(\underline{z}'_0)$
be any given element in $\mathcal{S}'_{i_0}$.
Suppose the contrary, that is,
there is no element $\iota'(\underline{z}')$ in $\mathcal{S}'_{i_0+1}$ such that
$\iota'(\underline{z}')\succ \iota'(\underline{z}'_0)$.
The other inequality is also not possible by (ii$\,'$), which is already proved.
Hence, $\iota'(\underline{z}'_0)$ would be incomparable with $\iota'(\underline{z}')$
for all elements $\iota'(\underline{z}')$ in $\mathcal{S}'_{i_0+1}$.
But this would mean that $\iota'(\underline{z}'_0)$ could have been chosen as a maximal element
already in the construction of $\mathcal{S}'_{i_0+1}$.
This is a contradiction with the fact that we choose all maximal elements in each step of the construction.
In general, if $i_0<i\leq\ell'$ is arbitrary, the claim follows step-by-step by looking at consecutive integers from $i_0$ all
the way up to $i$. For each pair of consecutive integers we already proved the claim, and invoking the
transitivity of the partial order $\succ$ implies (iii$\,'$).

It remains to prove uniqueness. Suppose that
$$
\mathcal{S}' = \bigcup_{i=0}^{\ell''} \mathcal{S}_i''
$$
is a different partition of the set $\mathcal{S}'$
satisfying (i$\,'$)-(iii$\,'$) of the proposition.
Let $i$ be the least integer such that $\mathcal{S}'_{\ell'-i}\neq\mathcal{S}''_{\ell''-i}$.
By the construction, $\mathcal{S}'_{\ell'-i}$ consists of all maximal elements in
the set
$$
\mathcal{S}'_{\mathrm{rem}}=\mathcal{S}'\setminus\bigcup_{j=\ell'-i+1}^{\ell'}\mathcal{S}'_j
=
\mathcal{S}'\setminus\bigcup_{j=\ell''-i+1}^{\ell''}\mathcal{S}''_j
$$
of remaining elements in the $i$-th step of the construction above.
Since the other partition satisfies (i$\,'$), all elements of $\mathcal{S}_{\ell''-i}''$
are incomparable, and according to (ii$\,'$) they are either greater than, or
incomparable to, all elements of $\mathcal{S}'_{\mathrm{rem}}\setminus \mathcal{S}''_{\ell''-i}$.
Hence, all elements of $\mathcal{S}_{\ell''-i}''$ are maximal in $\mathcal{S}'_{\mathrm{rem}}$,
so that $\mathcal{S}_{\ell''-i}''\subseteq \mathcal{S}_{\ell'-i}'$. The other inclusion follows from (iii$\,'$).
More precisely, if there exists $\iota'(\underline{z}')$ in $\mathcal{S}_{\ell'-i}'\setminus \mathcal{S}_{\ell''-i}''$,
then, in the second partition, $\iota'(\underline{z}')$ must belong to some $\mathcal{S}_{i_0}''$ with $i_0<\ell''-i$.
On the other hand, since $\mathcal{S}_{\ell''-i}''\subseteq \mathcal{S}_{\ell'-i}'$ and $\iota'(\underline{z}')\in \mathcal{S}_{\ell'-i}'$,
it follows from (i$\,'$) applied to $\mathcal{S}_{\ell'-i}'$ that $\iota'(\underline{z}')\in S_{i_0}''$ is incomparable to
all elements of $\mathcal{S}_{\ell''-i}''$. This is a contradiction with (iii$\,'$) for the second partition.
Hence, $\mathcal{S}_{\ell''-i}'' = \mathcal{S}_{\ell'-i}'$, and thus the two partitions are equal.
\end{proof}

\begin{thm}\label{thm:franke-inner-form}
Let $\mathcal{A}_{\{P'\},\varphi(\pi')}$ be the space of automorphic forms on $G_n'(\mathbb{A})$
with cuspidal support in the associate class represented by $\pi'$ as above. Then,
it can be arranged that the Franke filtration of
$\mathcal{A}_{\{P'\},\varphi(\pi')}$ takes the form
\begin{equation}\label{eq:1st-filt-in-thm-inner}
\mathcal{A}_{\{P'\},\varphi(\pi')}=\mathcal{A}_{\{P'\},\varphi(\pi')}^0\supsetneqq
\mathcal{A}_{\{P'\},\varphi(\pi')}^1\supsetneqq \dots \supsetneqq \mathcal{A}_{\{P'\},\varphi(\pi')}^{\ell'}\supsetneqq \mathcal{A}_{\{P'\},\varphi(\pi')}^{\ell'+1}=\{0\},
\end{equation}
where the consecutive quotients of the filtration are isomorphic to
\begin{equation}\label{eq:qts-in-thm-inner}
\mathcal{A}_{\{P'\},\varphi(\pi')}^i\slash \mathcal{A}_{\{P'\},\varphi(\pi')}^{i+1}\cong
\colim_{\substack{(R',\Pi',\underline{z}')\in\mathcal{M}' \\ \iota'(\underline{z}')\in \mathcal{S}'_i}}
\left(
{\Ind }_{R'(\mathbb{A})}^{G_n'(\mathbb{A})}\left(\Pi'\otimes\nu_{\underline{z}'}\right)
\otimes S(\check{\mathfrak{a}}_{R',\mathbb{C}}^{G_n'})
\right),
\end{equation}
for $i=0,1, \dots , \ell'$, and
$\mathcal{S}'_i$ are the subsets in the partition of $\mathcal{S}'=\mathcal{S}_{\{P'\},\varphi(\pi')}$ of Proposition \ref{prop:partition-of-S-inner-form}.
This form of the Franke filtration is the shortest
possible Franke filtration of $\mathcal{A}_{\{P'\},\varphi(\pi')}$.
\end{thm}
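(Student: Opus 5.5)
We would prove the theorem by making a specific choice of the function $T'$ in the general construction of the Franke filtration recalled above, and then invoking the general description \eqref{eq:filt-qts} of the consecutive quotients. Namely, using the partition of Proposition \ref{prop:partition-of-S-inner-form}, we define $T'$ on $\mathcal{S}'$ by
$$
T'(\iota'(\underline{z}'))=i \quad\hbox{if}\quad \iota'(\underline{z}')\in\mathcal{S}'_i, \qquad 0\leq i\leq \ell'.
$$
The first step is to check that this $T'$ is admissible, i.e., that it satisfies \eqref{eq:T-inequality}. Suppose $\iota'(\underline{z}')\succ\iota'(\underline{\zeta}')$ with $\iota'(\underline{z}')\in\mathcal{S}'_i$ and $\iota'(\underline{\zeta}')\in\mathcal{S}'_{i_0}$. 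Condition (i$\,'$) rules out $i=i_0$, since elements of the same $\mathcal{S}'_i$ are incomparable. Condition (ii$\,'$) rules out $i<i_0$, because if $i<i_0$ the element of the higher-indexed set would have to be greater or incomparable. Hence $i>i_0$, which is \eqref{eq:T-inequality}. Franke's theorem \cite{franke:filtration}, in the form of \cite[Sect.~2]{grbac-grobner:franke-gln}, applies verbatim to inner forms. It therefore yields a filtration whose quotients are given by \eqref{eq:filt-qts}, and the condition $T'(\iota'(\underline{z}'))=i$ is precisely $\iota'(\underline{z}')\in\mathcal{S}'_i$. This gives \eqref{eq:qts-in-thm-inner}. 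Since $T'$ takes values in $\{0,\dots,\ell'\}$, we get $\mathcal{A}^i=\mathcal{A}^0$ for $i\leq 0$ and $\mathcal{A}^{\ell'+1}=\{0\}$, so the filtration has the form \eqref{eq:1st-filt-in-thm-inner} with the stated endpoints.

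The second step is the strictness of the inclusions, which amounts to showing that each quotient \eqref{eq:qts-in-thm-inner} is nonzero. Each $\mathcal{S}'_i$ is non-empty, so there is at least one triple $(R',\Pi',\underline{z}')$ with $\iota'(\underline{z}')\in\mathcal{S}'_i$. The colimit over the groupoid is then a direct sum over the isomorphism classes of triples with $\iota'(\underline{z}')\in\mathcal{S}'_i$. For each class, the summand is the coinvariants of the corresponding induced representation under its automorphism group acting through the intertwining operators $M'(w)$. Each such summand is nonzero, because the coinvariants of a finite group acting on the nonzero space $\Ind(\Pi'\otimes\nu_{\underline{z}'})\otimes S(\check{\mathfrak{a}}^{G_n'}_{R',\mathbb{C}})$ are nonzero. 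We expect this to be the main technical point, and we would handle it via the same argument as in \cite{grbac-grobner:franke-gln}.

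The final step is minimality of the length. In any admissible $T'$, a chain $x_0\prec x_1\prec\dots\prec x_{\ell'}$ in $\mathcal{S}'$ forces $\ell'+1$ distinct values of $T'$, and the nonzero quotient attached to each value forces at least $\ell'+1$ nonzero steps in the filtration. Such a chain exists by (iii$\,'$). Starting from any $x_0\in\mathcal{S}'_0$, condition (iii$\,'$) with $i=1$ gives $x_1\in\mathcal{S}'_1$ with $x_1\succ x_0$. Applying (iii$\,'$) again to $x_1$, and iterating, yields $x_i\in\mathcal{S}'_i$ with $x_i\succ x_{i-1}$ for each $i$ up to $\ell'$. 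Hence no Franke filtration can have fewer than $\ell'+1$ nonzero quotients, and the filtration constructed above is the shortest possible.
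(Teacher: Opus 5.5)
Your main line is the paper's proof. You use the same $T'$ (equal to $i$ on $\mathcal{S}'_i$), admissibility from (i$\,'$) and (ii$\,'$), the identification $T'^{-1}(i)=\mathcal{S}'_i$ giving the quotients, and minimality from a chain $x_0\prec x_1\prec\dots\prec x_{\ell'}$ built by iterating (iii$\,'$).

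The one step that goes beyond the paper is your proof of the strict inclusions, and it rests on a false claim. Over $\mathbb{C}$ the coinvariants $V_H$ of a finite group $H$ are isomorphic to the invariants $V^H$. These vanish whenever $H$ acts through a nontrivial character, e.g.\ $\mathbb{Z}/2$ acting by $-1$. This does happen for Franke's functor. Already for $GL_2$, with support $\mathbf{1}\otimes\mathbf{1}$ and $\underline{z}=0$, the swap acts on the irreducible representation $\Ind(\mathbf{1}\otimes\mathbf{1})$ by the scalar $M(w,0)=-1$; this is the classical identity $E(z,1/2)\equiv0$. So the coinvariants of the induced factor alone are zero, and your argument gives no reason to exclude such signs on $G_n'(\mathbb{A})$.

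The nonvanishing is nevertheless true, but it comes from the symmetric algebra factor. The action of an automorphism $w$ on $\Ind(\Pi'\otimes\nu_{\underline{z}'})\otimes S(\check{\mathfrak{a}}^{G_n'}_{R',\mathbb{C}})$ preserves the filtration by polynomial degree, since derivatives of the intertwining operator only lower the degree. On the associated graded, the action is the intertwining operator at $\underline{z}'$ tensored with the permutation action of $w$ on the polynomials. Coinvariants of a finite group are exact in characteristic zero, so it suffices to show the graded coinvariants are nonzero.

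The automorphism group $H$ of the triple acts faithfully on $\check{\mathfrak{a}}^{G_n'}_{R'}$, because it permutes coordinates and fixes the central direction. Restricting polynomials to a free $H$-orbit then shows that every irreducible representation of $H$ occurs in the symmetric algebra. Hence the $H$-invariants of the tensor product are nonzero. In the $GL_2$ example this is just $\partial_\lambda E(f,\lambda)|_{\lambda=0}\neq0$.

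Your minimality step, like the paper's, tacitly uses the same fact: distinct values of $T'$ yield strict steps only if each value contributes a nonzero quotient. With this repair, both the strict inclusions and the claim of shortest length are justified.
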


\begin{proof}
The form of the Franke filtration is determined by the choice of
function $T'=T_{\{P'\},\varphi(\pi')}$ on $\mathcal{S}'$
that satisfies condition \eqref{eq:T-inequality}.
We make the choice of $T'$ by assigning
$$
T'(\iota'(\underline{z}'))=i
$$
for every $\iota'(\underline{z}')$ in $\mathcal{S}'_i$,
where $i=0,1,\dots ,\ell'$.
This function satisfies condition \eqref{eq:T-inequality},
because of properties (i$\,'$) and (ii$\,'$) of subsets $\mathcal{S}'_i$ in Proposition \ref{prop:partition-of-S-inner-form}.
Since the preimage of $i=0,1,\dots ,\ell'$ under $T'$ is $\mathcal{S}'_i$,
the consecutive quotients of the Franke filtration \eqref{eq:1st-filt-in-thm-inner} obtained from this choice of $T'$ are
as in equation \eqref{eq:qts-in-thm-inner}.

The fact that the form \eqref{eq:1st-filt-in-thm-inner} of the Franke filtration is the shortest possible follows from
property (iii$\,'$) in Proposition \ref{prop:partition-of-S-inner-form} which implies that there is a chain of length $\ell'+1$ in
the partial order on $\mathcal{S}'$. Hence, the range of $T'$ must be at least of
cardinality $\ell'+1$, in order to satisfy condition \eqref{eq:T-inequality}, so that the length of the filtration is
at least $\ell'+1$, as claimed.
\end{proof}

\begin{rmk}
Fixing the form of the Franke filtration as in the theorem, or equivalently the fixed choice of function $T'=T_{\{P'\},\varphi(\pi')}$ as in the proof,
in the case of the inner form of the general linear group is required to make the Jacquet--Langlands correspondence explicit in Sect.~\ref{sect:global-JL-beyond-discrete}.
We remark that the choice of function $T'$ is exactly the choice that produces the
Franke filtration compatible with the ranks of Eisenstein series in the case of cuspidal support
of a residual representation of $G_n'(\mathbb{A})$, as in \cite[Sect.~4]{grbac-grobner:franke-gln}.
\end{rmk}

\section{The global Jacquet--Langlands correspondence beyond the discrete spectrum}
\label{sect:global-JL-beyond-discrete}

In this section we define the Jacquet--Langlands correspondence for spaces of automorphic forms.
It is defined in terms of the consecutive quotients of the
Franke filtration. The process of construction of the Jacquet--Langlands correspondence is divided in
several steps in which we study all the necessary
ingredients to formulate and prove our final result.

\subsection*{STEP 0: The cuspidal supports}

Step 0 in establishing the global Jacquet--Langlands correspondence is to
relate the cuspidal supports of the considered spaces of automorphic forms.
It is the step that detects which spaces of automorphic forms should be
related by the Jacquet--Langlands correspondence.

Let the cuspidal support for automorphic forms on the inner form $G_n'(\mathbb{A})$ be
the associate class represented by a cuspidal automorphic representation
$$
\pi'\cong \rho_1'\nu^{s_1} \otimes \rho_2'\nu^{s_2} \otimes\dots
\otimes \rho_l'\nu^{s_l},
$$
of the Levi factor $L'(\mathbb{A})$ of the parabolic subgroup $P'(\mathbb{A})$,
where $\rho_j'$ is a cuspidal automorphic representation of $G_{m_j'}'(\mathbb{A})$,
with $\sum_{j=1}^l m_j'=n$, and the exponents $s_1, \dots , s_l$ are real numbers,
as in Sect.~\ref{sect:filtr-defn}.
Recall that $P'$ is the parabolic subgroup corresponding to the ordered partition $(m_1',\dots ,m_l')$ of $n$,
we may assume that the representative is chosen in such
a way that $s_1\geq s_2\geq \dots \geq s_l$, and that $\sum_{j=1}^l m_j's_j = 0$.

We denote by
$$
\mathcal{E}(\pi')=\{s_1,s_2,\dots ,s_l\}
$$
the multiset of exponents appearing in the cuspidal support represented by $\pi'$. Given a cuspidal automorphic representation
$\rho'$ of $G'_{m'}(\mathbb{A})$, let $\mathcal{E}_{\rho'}(\pi')$ be the submultiset
of $\mathcal{E}(\pi')$ consisting of the exponents in $\pi'$ associated with $\rho'$.
Note that $\mathcal{E}_{\rho'}(\pi')$ is empty if $\rho'$ does not appear in $\pi'$.

The only natural way to define the Jacquet--Langlands transfer $\pi$ of the cuspidal support $\pi'$ is to apply
the global Jacquet--Langlands correspondence for discrete spectra
\begin{align*}
\pi=\mathbf{G}(\pi') &\cong \mathbf{G}(\rho_1')\nu^{s_1} \otimes \mathbf{G}(\rho_2')\nu^{s_2} \otimes\dots \otimes \mathbf{G}(\rho_l')\nu^{s_l} \\
&= MW(\rho_1,k_{\rho_1})\nu^{s_1} \otimes MW(\rho_2,k_{\rho_2})\nu^{s_2} \otimes\dots \otimes MW(\rho_l,k_{\rho_l})\nu^{s_l},
\end{align*}
as defined in Sect.~\ref{sect:disc-spec-gln}. Here $\rho_j$ is a cuspidal automorphic representation of $G_{m_j}(\mathbb{A})$
and $k_{\rho_j}$ is a positive integer with $m_j'd=m_jk_{\rho_j}$, which are determined by the condition $\mathbf{G}(\rho_j')=MW(\rho_j,k_{\rho_j})$.
Thus, $\pi$ is a (possibly residual) discrete spectrum representation of the Levi factor $M(\mathbb{A})$ of the parabolic
subgroup $P$ of $G_{nd}$ corresponding to the ordered partition $(m_1k_{\rho_1},m_2k_{\rho_2},\dots ,m_lk_{\rho_l})$ of $nd$.

However, since $\pi$ is not always cuspidal, we take its cuspidal support to serve as the cuspidal
support of the considered automorphic forms on $G_{nd}(\mathbb{A})$. Its representative can be given explicitly
as the cuspidal automorphic representation
\begin{align*}
  \sigma \cong & \rho_1\nu^{s_1+\frac{k_{\rho_1}-1}{2}} \otimes \rho_1\nu^{s_1+\frac{k_{\rho_1}-3}{2}} \otimes \dots \otimes \rho_1\nu^{s_1-\frac{k_{\rho_1}-1}{2}} \\
   & \otimes\rho_2\nu^{s_2+\frac{k_{\rho_2}-1}{2}} \otimes \rho_2\nu^{s_2+\frac{k_{\rho_2}-3}{2}} \otimes \dots \otimes \rho_2\nu^{s_2-\frac{k_{\rho_2}-1}{2}}  \otimes\dots \\
   & \dots\otimes \rho_l\nu^{s_l+\frac{k_{\rho_l}-1}{2}} \otimes \rho_l\nu^{s_l+\frac{k_{\rho_l}-3}{2}} \otimes \dots \otimes \rho_l\nu^{s_l-\frac{k_{\rho_l}-1}{2}}
\end{align*}
of the Levi factor of the parabolic subgroup $Q$ of $G_{nd}$ corresponding to the ordered partition
$$
(m_1,\dots ,m_1,m_2,\dots ,m_2,\dots ,m_l,\dots ,m_l)
$$
of $nd$, where $m_j$ appears $k_{\rho_j}$ times in the partition.
As in the case of inner form, we denote by $\mathcal{E}(\sigma)$ the multiset of exponents of $\sigma$,
and by $\mathcal{E}_\rho(\sigma)$ the submultiset of exponents associated with a cuspidal automorphic representation
$\rho$ of $G_m(\mathbb{A})$.

The following lemma shows that this construction is well-defined, that is, independent of the representative of the
associate class, and non-associate cuspidal supports for $G_n'(\mathbb{A})$ are assigned non-associate cuspidal supports for $G_{nd}(\mathbb{A})$.
Recall that associate cuspidal supports in our case are obtained from each other by a permutation of factors in the tensor product.

\begin{lem}\label{lem:cusp-supp-well-defnd}
In the construction of the representative $\sigma$ of the cuspidal support for $G_{nd}(\mathbb{A})$ from
the representative $\pi'$ of the cuspidal support for $G_n'(\mathbb{A})$ in the notation as above,
two representatives of cuspidal supports for $G_n'(\mathbb{A})$ are associate if and only if
the corresponding representatives of cuspidal supports for $G_{nd}(\mathbb{A})$ are associate.
\end{lem}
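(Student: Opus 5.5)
The plan is to reduce the statement to a comparison of multisets. Two representatives of cuspidal supports (for either group) are associate exactly when they differ by a permutation of the tensor factors. So each is determined up to association by the multiset of pairs (cuspidal representation, exponent). Here the cuspidal representations are normalized as in Sect.~\ref{sect:filtr-defn}: all real twists are absorbed into the exponents, so $\rho_j'$ and $\rho_j$ are taken unitary, i.e.\ with central character trivial on the positive reals embedded diagonally. With this normalization, $\pi'$ determines the multiset $\{(\rho_j',s_j)\}_{j=1}^l$, and $\sigma$ determines the multiset
$$
\bigcup_{j=1}^l\{(\rho_j,\,s_j+\tfrac{k_{\rho_j}-1}{2}-t)\ :\ t=0,1,\dots,k_{\rho_j}-1\},
$$
that is, the union of the exponent segments $\mathcal{E}_{\rho_j}$ attached to the segments $\Delta(\rho_j;s_j-\frac{k_{\rho_j}-1}{2},s_j+\frac{k_{\rho_j}-1}{2})$. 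One has to check that twisting $\rho'$ by a unitary character of $\nu$-type does not interfere. This holds because $\mathbf{G}(\rho'\nu^{it})=\mathbf{G}(\rho')\nu^{it}$, so the assignment $\rho'\mapsto(\rho,k_\rho)$ is compatible with twists. That compatibility is all we need to keep the bookkeeping consistent.

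For the forward direction, suppose $\pi'$ and $\pi''$ are associate. Then a permutation $w$ of the $l$ blocks carries one to the other. The corresponding permutation of the blocks $(\rho_j,\dots,\rho_j)$ of length $k_{\rho_j}$ in $\sigma$ carries $\sigma$ to $\sigma''$, since the construction of $\sigma$ is done block by block from the pairs $(\rho_j',s_j)$. Hence $\sigma$ and $\sigma''$ are associate.

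For the converse, I will show that the multiset of pairs attached to $\sigma$ uniquely recovers the multiset $\{(\rho_j',s_j)\}$. First, by the injectivity of $\mathbf{G}$ on the discrete spectrum, recalled in Sect.~\ref{sect:disc-spec-gln}, each unitary cuspidal $\rho$ arises from at most one $\rho'$, namely the one with $\mathbf{G}(\rho')=MW(\rho,k_\rho)$. Moreover, $k_\rho$ depends only on $\rho$. Hence it suffices to fix $\rho$ and recover, from the submultiset $\mathcal{E}_\rho(\sigma)$, the multiset of centers $s_j$ over those $j$ with $\rho_j\cong\rho$. This multiset is by construction a disjoint union of arithmetic progressions with step $1$ and common length $k=k_\rho$.

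I recover the centers by induction on $|\mathcal{E}_\rho(\sigma)|$. Let $e$ be the largest element of $\mathcal{E}_\rho(\sigma)$. Then $e$ must be the top element $s_j+\frac{k-1}{2}$ of some progression: it lies in some progression, and if it were not that progression's top element, the progression would contain $e+1>e$, contradicting maximality. So one of the centers is $e-\frac{k-1}{2}$. I remove the progression $\{e,e-1,\dots,e-k+1\}$ (with multiplicities counted once each) and apply the induction hypothesis to the remainder, which is again a union of such progressions. This peeling argument is the only step needing any care. The key point is that all progressions for a fixed $\rho$ have the same length $k_\rho$, so the decomposition is forced; with varying lengths it would not be. Consequently, $\sigma$ and $\sigma''$ associate implies that the multisets $\{(\rho_j',s_j)\}$ coincide, i.e.\ $\pi'$ and $\pi''$ are associate.
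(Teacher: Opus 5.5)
Your proposal is correct and follows essentially the same argument as the paper: the forward direction uses the same block permutation, and the converse fixes $\rho$ and peels off the unique length-$k_\rho$ segment topped by a maximal exponent of $\mathcal{E}_\rho(\sigma)$. You additionally make explicit two points the paper leaves implicit, namely the normalization of the pairs $(\rho_j', s_j)$ and the use of injectivity of $\mathbf{G}$ on the discrete spectrum to pass from $\pi$ back to $\pi'$.
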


\begin{proof}
It is clear that associate cuspidal supports $\pi'$ give rise to associate $\sigma$, because they are associate by
the same permutation at the level of $\pi'$ and $\pi$. The more subtle fact is that non-associate $\pi'$ cannot
give rise to associate cuspidal supports $\sigma$.

The idea of the proof is to show that $\pi$ constructed from $\pi'$
is uniquely determined by $\sigma$, up to permutation of factors in the tensor product. By the construction of
$\sigma$, it contains a segment corresponding to each factor $MW(\rho_j,k_{\rho_j})\nu^{s_j}$ in the tensor product for $\pi$.
The problem is to prove that these segments cannot be rearranged in such a way that they produce a tensor product
not associate to $\pi$. More precisely, we must prove that any partition of $\sigma$ in segments of lengths $k_{\rho_j}$
is a permutation of the partition given by the factors in the tensor product for $\pi$.

The segments can be constructed only from factors with isomorphic $\rho_j$.
Hence, suppose that $\rho$ is a cuspidal automorphic representation isomorphic to some $\rho_j$. Then, the part of
$\sigma$ that can be used in the construction of the segment involving $\rho$ is of the form
\begin{align*}
  & \rho\nu^{t_1+\frac{k_{\rho}-1}{2}} \otimes \rho\nu^{t_1+\frac{k_{\rho}-3}{2}} \otimes \dots \otimes \rho\nu^{t_1-\frac{k_{\rho}-1}{2}} \otimes\dots \\
   & \dots\otimes \rho\nu^{t_r+\frac{k_{\rho}-1}{2}} \otimes \rho\nu^{t_r+\frac{k_{\rho}-3}{2}} \otimes \dots \otimes \rho\nu^{t_r-\frac{k_{\rho}-1}{2}},
\end{align*}
where $t_1,\dots ,t_r$ are the exponents in $\pi$ corresponding to all $\rho_j$ isomorphic to $\rho$.
In other words,
$$
\mathcal{E}_{\rho}(\sigma)=\left\{
t_1+\frac{k_{\rho}-1}{2},t_1+\frac{k_{\rho}-3}{2},\dots ,t_1-\frac{k_{\rho}-1}{2}, \dots ,
t_r+\frac{k_{\rho}-1}{2},t_r+\frac{k_{\rho}-3}{2},\dots ,t_r-\frac{k_{\rho}-1}{2}
\right\}
$$
is the multiset of exponents in $\sigma$ associated with $\rho$,
where $t_1,\dots, t_r$ form the multiset
$\mathcal{E}_{\rho'}(\pi')=\{t_1,\dots ,t_r\}$ obtained from $\pi'$, where $\rho'$ and $\rho$ are related as in the construction above.
The multiset $\mathcal{E}_{\rho}(\sigma)$ is obtained as the multiset sum of segments
$$
\Delta \left(\rho ; t_j-\frac{k_{\rho}-1}{2}, t_j+\frac{k_{\rho}-1}{2} \right),
$$
which are all of length $k_\rho$.
The goal is to prove that the same multiset $\mathcal{E}_{\rho}(\sigma)$ cannot be obtained as the multiset sum of segments all of
which are of length $k_\rho$ in a different way. This will imply that $\pi$ is uniquely determined by $\sigma$, up to permutation of factors in the tensor product, as required.

The above goal is achieved by the following algorithm that produces a unique partition of $\mathcal{E}_{\rho}(\sigma)$ in segments of length $k_\rho$.
Choose any maximal element of the multiset $\mathcal{E}_{\rho}(\sigma)$. It must be an element of a segment of length $k_\rho$.
This uniquely defines a segment, which can be removed from the multiset. In the remaining multiset, choose again a
maximal element, and remove the uniquely determined segment of length $k_\rho$ in which it is contained. This algorithm shows
that the partition of the multiset of exponents into segments of length $k_\rho$ can be constructed in a unique way, which is
the original partition from which the multiset is obtained. Thus, the lemma is proved.
\end{proof}

The lemma implies that, given a cuspidal support for $G_n'(\mathbb{A})$ represented by $\pi'$, the construction of $\sigma$
defines a unique cuspidal support for $G_{nd}(\mathbb{A})$ represented by $\sigma$, and the assignment of associate classes
$$
\varphi(\pi')\mapsto\varphi(\sigma)
$$
defines an injective map. Hence, as Step 0 towards the global Jacquet--Langlands correspondence of the spaces of
automorphic forms, we define the injective map, still denoted by $\mathbf{G}$, between the spaces of automorphic forms with a fixed cuspidal support
given by
$$
\mathbf{G}\left(
\mathcal{A}_{\{P'\},\varphi(\pi')}
\right)
=
\mathcal{A}_{\{Q\},\varphi(\sigma)}
$$
where $\sigma$ and $Q$ are determined by $\pi'$ and $P'$ as in the construction above.

It remains to characterize the spaces of automorphic forms on $G_{nd}(\mathbb{A})$ that are
in the image of the Jacquet--Langlands correspondence from $G_n'(\mathbb{A})$.
Let $\mathcal{A}_{\{Q_0\},\varphi(\sigma_0)}$
be the space of automorphic forms on $G_{nd}(\mathbb{A})$ represented by a cuspidal automorphic
representation $\sigma_0$ of the Levi factor of a parabolic subgroup $Q_0$. Consider the
multiset $\mathcal{E}(\sigma_0)$ of exponents of $\sigma_0$. It can be decomposed in a multiset
sum of $\mathcal{E}_\rho(\sigma_0)$ over cuspidal automorphic representations
$\rho$ of general linear groups. Then, it is clear from the construction above that
the space $\mathcal{A}_{\{Q_0\},\varphi(\sigma_0)}$ is in the image
of the Jacquet--Langlands correspondence if and only if $\mathcal{E}_\rho(\sigma_0)$ can be decomposed
in a multiset sum of segments of length $k_\rho$ for every $\rho$ such that $\mathcal{E}_\rho(\sigma_0)$ is non-empty.

\subsection*{STEP 1: The set of triples}

The next step in the construction of the global Jacquet--Langlands correspondence for spaces of automorphic forms
is to establish the Jacquet--Langlands transfer of triples in $\mathcal{M}'=\mathcal{M}_{\{P'\},\varphi(\pi')}$,
for a fixed cuspidal support represented by $\pi'$ as in Step 0. According to Step 0, the image of the transfer
is in the set $\mathcal{M}=\mathcal{M}_{\{Q\},\varphi(\sigma)}$ of triples for the transferred cuspidal support
represented by $\sigma$.

Let $(R',\Pi',\underline{z}')$ be a triple in $\mathcal{M}'$,
where
\begin{itemize}
  \item $R'$ is the parabolic subgroup of $G_n'$ corresponding to the ordered partition $(n_1,\dots ,n_r)$ of $n$,
  \item $\Pi'\cong\Pi_1'\otimes \dots\otimes \Pi_r'$ is the discrete spectrum representation of the Levi factor of $R'$,
  where $\Pi_j'$ is a unitary discrete spectrum representation of $G_{n_j}'(\mathbb{A})$,
  \item $\underline{z}'=(z_1,\dots ,z_r)$ is in the closure of the positive Weyl chamber in $\check{\mathfrak{a}}_{R'}^{G_n'}$,
  so that $z_j$ are real numbers such that $z_1\geq\dots\geq z_r$.
\end{itemize}
The Jacquet--Langlands transfer of triples is defined as the injective map, denoted again by $\mathbf{G}$, from the set of triples $\mathcal{M}'$
into the set of triples $\mathcal{M}$ given by the assignment
$$
\mathbf{G}\big(
(R',\Pi',\underline{z}')
\big)
=\big(\mathbf{G}(R'),\mathbf{G}(\Pi'),\mathbf{G}(\underline{z}')\big)
=(R,\Pi,\underline{z}),
$$
where $(R,\Pi,\underline{z})$ is the triple in $\mathcal{M}$ defined as
\begin{itemize}
  \item $R$ is the parabolic subgroup of $G_{nd}$ corresponding to the ordered partition $(n_1d,\dots ,n_rd)$ of $nd$,
  \item $\Pi\cong \Pi_1\otimes \dots\otimes\Pi_r$ is the discrete spectrum representation of the Levi factor of $R$,
  where $\Pi_j=\mathbf{G}(\Pi_j')$ is the global Jacquet--Langlands transfer of $\Pi_j'$ from $G_{n_j}'(\mathbb{A})$ to $G_{n_jd}(\mathbb{A})$ as in Sect.~\ref{sect:disc-spec-gln},
  \item $\underline{z}=(z_1,\dots ,z_r)$ is the same $r$-tuple as $\underline{z}'$, viewed as an element
  in the closure of the positive Weyl chamber in $\check{\mathfrak{a}}_{R}^{G_{nd}}$.
\end{itemize}
Observe that the last entry $\underline{z}'$ in the triples remains unchanged in the transfer, but is viewed
as an element of a different space.

According to \cite[Lemma 3.1]{grbac-grobner:franke-gln}, which applies also to the case of inner forms,
the sets of triples $\mathcal{M}'$ and $\mathcal{M}$ are in finite-to-one
correspondence with partitions of their cuspidal supports in segments.
The image of the Jacquet--Langlands correspondence $\mathbf{G}$ on the set of triples is described in the
following lemma in terms of partitions in segments.
We denote by $\mathcal{M}^{\mathbf{G}}=\mathcal{M}^{\mathbf{G}}_{\{Q\},\varphi(\sigma)}$ the set of triples obtained
as the Jacquet--Langlands transfer of the triples in $\mathcal{M}'$.

\begin{lem}\label{lem:image-triples}
The triple $(R,\Pi,\underline{z})$ in $\mathcal{M}=\mathcal{M}_{\{Q\},\varphi(\sigma)}$ is in the
image of the Jacquet--Langlands correspondence $\mathbf{G}$ of a triple in $\mathcal{M}'=\mathcal{M}_{\{P'\},\varphi(\pi')}$
if and only if it corresponds to the partition of multiset $\mathcal{E}(\sigma)$ of exponents appearing in the
tensor product for $\sigma$ into segments such that the lengths of all segments based on any given unitary cuspidal automorphic representation
$\rho$ are multiples of $k_\rho$.
\end{lem}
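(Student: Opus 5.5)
We use the combinatorial description of triples from \cite[Lemma 3.1]{grbac-grobner:franke-gln}, valid for both $G_n'$ and $G_{nd}$. A triple $(R,\Pi,\underline{z})$ in $\mathcal{M}$ with $\Pi\cong\Pi_1\otimes\dots\otimes\Pi_r$ and $\Pi_j\cong MW(\rho_{(j)},a_j)$ determines the twisted discrete spectrum representations $\Pi_j\nu^{z_j}$. By Sect.~\ref{sect:disc-spec-gln}, each of these corresponds bijectively to the segment $\Delta(\rho_{(j)}; z_j-\frac{a_j-1}{2}, z_j+\frac{a_j-1}{2})$ of length $a_j$. The condition that the cuspidal support of $\Pi\otimes\nu_{\underline{z}}$ lies in $\varphi(\sigma)$ says exactly that these segments partition the multiset $\mathcal{E}(\sigma)$, sorted by the cuspidal $\rho$. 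The same description holds on the inner form side: segments $\Delta'(\rho';a,b)$ correspond to $MW'(\rho',b-a+1)\nu^{k_\rho\frac{a+b}{2}}$. So the plan is to compute what $\mathbf{G}$ does to segments and compare.

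\emph{Forward direction.} Take $(R',\Pi',\underline{z}')\in\mathcal{M}'$ with $\Pi_j'\cong MW'(\rho_{(j)}',k_j')$. Then $\mathbf{G}(\Pi_j')=MW(\rho_{(j)},k_{\rho_{(j)}}k_j')$, and the exponent $z_j$ is unchanged. The segment attached to $\mathbf{G}(\Pi_j')\nu^{z_j}$ is
$$
\Delta\left(\rho_{(j)}; z_j-\frac{k_{\rho_{(j)}}k_j'-1}{2}, z_j+\frac{k_{\rho_{(j)}}k_j'-1}{2}\right),
$$
whose length $k_{\rho_{(j)}}k_j'$ is a multiple of $k_{\rho_{(j)}}$.

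It remains to check that these segments together give exactly $\mathcal{E}(\sigma)$, so that the image triple lies in $\mathcal{M}$. Each such segment splits into $k_j'$ consecutive blocks of length $k_{\rho_{(j)}}$, with centers $z_j+k_{\rho_{(j)}}\frac{k_j'-1}{2}-k_{\rho_{(j)}}i$ for $i=0,\dots,k_j'-1$. Dividing these centers by $k_\rho$ gives exactly the exponents in the inner-form segment $\Delta'$ attached to $\Pi_j'\nu^{z_j}$. Note that the normalization $\nu^{k_\rho a}$ in the definition of $\Delta'$ is what makes the centers match, so the factor $k_\rho$ must be tracked carefully here. Summing over $j$, and using that the segments $\Delta'$ partition the cuspidal support of $\pi'$, the union of the blocks is precisely the multiset of segments $\Delta(\rho;t-\frac{k_\rho-1}{2},t+\frac{k_\rho-1}{2})$ for $t\in\mathcal{E}_{\rho'}(\pi')$. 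This is $\mathcal{E}_\rho(\sigma)$ by the construction of $\sigma$ in Step 0.

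\emph{Converse.} Let $(R,\Pi,\underline{z})\in\mathcal{M}$ correspond to a partition of $\mathcal{E}(\sigma)$ in which every segment based on $\rho$ has length $k_\rho k_j'$. Then each $\Pi_j=MW(\rho_{(j)},k_{\rho_{(j)}}k_j')$ lies in the image of $\mathbf{G}$ by Badulescu's description of the image, namely $k_\rho\mid k$. Hence $\Pi_j=\mathbf{G}(MW'(\rho_{(j)}',k_j'))$, and we put $\Pi'=\bigotimes_j MW'(\rho_{(j)}',k_j')$. Let $R'$ correspond to $(n_1,\dots,n_r)$ with $n_jd=m_{(j)}k_{\rho_{(j)}}k_j'$, and keep $\underline{z}'=\underline{z}$. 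The ordering $z_1\ge\dots\ge z_r$ is preserved, and $\sum n_jz_j=0$ holds since the $n_jd$ are just the $n_j$ scaled by $d$.

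The main obstacle is showing that the cuspidal support of $\Pi'\otimes\nu_{\underline{z}'}$ is associate to $\pi'$. Reversing the block computation above, it is associate to some $\pi''$ whose Step 0 transfer is associate to $\sigma$. Lemma~\ref{lem:cusp-supp-well-defnd} (injectivity on cuspidal supports) then forces $\pi''$ to be associate to $\pi'$, so $(R',\Pi',\underline{z}')\in\mathcal{M}'$ and it maps to $(R,\Pi,\underline{z})$. Injectivity of $\mathbf{G}$ on discrete spectra also shows this preimage is unique.
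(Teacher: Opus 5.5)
Your proof is correct and follows the paper's own route. The forward direction is a direct check. For the converse you split each segment of length $k_\rho k'$ into $k'$ blocks of length $k_\rho$ and appeal to the uniqueness of such a decomposition of $\mathcal{E}_\rho(\sigma)$ from Lemma~\ref{lem:cusp-supp-well-defnd}, which you phrase as injectivity on cuspidal supports. Your version also spells out the preimage triple and Badulescu's criterion $k_\rho\mid k$, which the paper's terse proof leaves implicit.

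One wording fix. The block centers are themselves the $\nu$-exponents $k_\rho a,\dots,k_\rho b$ of $\Delta'(\rho';a,b)$; dividing by $k_\rho$ only recovers the parameters $a,\dots,b$. Your final step correctly identifies the undivided centers with $t\in\mathcal{E}_{\rho'}(\pi')$, so the earlier phrase about dividing the centers by $k_\rho$ should be adjusted to match.
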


\begin{proof}
The idea of the proof is the same as in Lemma \ref{lem:cusp-supp-well-defnd}. It is clear that the image of a given triple
in $\mathcal{M}'$ is of the required form. The converse is again more subtle. Given a triple
in $\mathcal{M}'$ which corresponds to a partition of $\mathcal{E}(\sigma)$ as in the statement,
one should prove that it is obtained as the Jacquet--Langlands transfer of a triple in $\mathcal{M}'$.
But the length of a segment in the partition is a multiple of $k_\rho$ for $\rho$ on which the segment is based.
Hence, it is a union of $k$ segments of length $k_\rho$. It follows that decomposing each segment into a union of
segments of length $k_\rho$ gives rise to a decomposition of $\mathcal{E}(\sigma)$ into a multiset sum of
segments of length $k_\rho$. However, it is proved in Lemma \ref{lem:cusp-supp-well-defnd} that such a decomposition
is unique and corresponds to the decomposition of $\pi$ into tensor product, as required.
\end{proof}

\subsection*{STEP 2: The isomorphisms in the category of triples}

The purpose of this step is to show that the isomorphisms between triples are
preserved under the Jacquet--Langlands correspondence of triples.
Thus, the Jacquet--Langlands correspondence $\mathbf{G}$ can be viewed
as a (covariant) functor from the groupoid $\mathcal{M}'$ to the groupoid $\mathcal{M}$,
where its action on objects is given in Step 1, and now we define
the action on isomorphisms.

Recall that, given two (possibly equal) triples
$(R',\Pi',\underline{z}')$ and $(S',\Sigma',\underline{\zeta}')$
in $\mathcal{M}'$, an isomorphism from the former to the latter is any element $w'\in W(L_{R'})$
such that the conjugate $w'L_{R'}w'^{-1}$ of the Levi factor $L_{R'}$ of $R'$ is the Levi factor $L_{S'}$ of $S'$,
and the conjugates by $w'$ of $\Pi'$ and $\underline{z}'$ are $w'(\Pi')=\Sigma'$ and $w'(\underline{z}')=\underline{\zeta}'$, respectively.
If the parabolic subgroup $R'$ corresponds to the ordered partition
$\underline{n}=(n_1,\dots ,n_r)$ of $n$, then the elements of $W(L_{R'})$ can be identified with the set
of permutations of $r$ letters. In this identification, the permutation $w'\in W(L_{R'})$ acts on the triple
by permuting the diagonal blocks of the Levi factor of $R'$, the factors in the tensor product of $\Pi'$, and
the coordinates of $\underline{z}'$.

Let $(R,\Pi ,\underline{z})=\mathbf{G}\big((R',\Pi' ,\underline{z}')\big)$ and $(S,\Sigma ,\underline{\zeta})=\mathbf{G}\big((S',\Sigma' ,\underline{\zeta}')\big)$
be the Jacquet--Langlands transfers of $(R',\Pi' ,\underline{z}')$ and $(S',\Sigma' ,\underline{\zeta}')$, respectively.
These are triples in $\mathcal{M}$.
The isomorphisms from $(R,\Pi ,\underline{z})$ to $(S,\Sigma ,\underline{\zeta})$
are elements of $W(L_R)$ which conjugate $L_R$ to $L_S$, $\Pi$ to $\Sigma$, and $\underline{z}$ to $\underline{\zeta}$.
The parabolic subgroup $R$ corresponds to the ordered partition
$(n_1d,\dots ,n_rd)$ of $nd$. Hence, the elements of $W(L_R)$ can also be identified with permutations
of $r$ letters, under which the permutation $w\in W(L_R)$ acts on the triple by permuting the
diagonal blocks of the Levi factor $L_R$, the factors in the tensor product of $\Pi$ and the coordinates of $\underline{z}$.

Observe that by definition, the third entry $\underline{z}'$
of all the triples in $\mathcal{M}'$ belongs to the closure of the positive Weyl chamber
in $\check{\mathfrak{a}}_{R'}^{G_n'}$, which is given by the condition $z_1\geq \dots \geq z_r$ in coordinates.
Hence, an isomorphism $w'\in W(L_{R'})$ is a permutation of coordinates that preserves that condition.
More precisely, isomorphism may permute only the equal coordinates of $\underline{z}'$.
The same holds for the isomorphisms in $\mathcal{M}$.

Given $w'\in W(L_{R'})$, let $w=\mathbf{G}(w')\in W(L_R)$ be such that $w$ and $w'$ correspond to the same permutation
under identifications introduced above. The following lemma shows that isomorphisms are preserved under $\mathbf{G}$.

\begin{lem}\label{lem:morphisms-preserved}
In the notation as above, $w'\in W(L_{R'})$ is an isomorphism from $(R',\Pi',\underline{z}')$ to $(S',\Sigma',\underline{\zeta}')$
if and only if $w=\mathbf{G}(w')\in W(L_R)$ is an isomorphism from $(R,\Pi,\underline{z})=\mathbf{G}\big((R',\Pi',\underline{z}')\big)$
to $(S,\Sigma ,\underline{\zeta})=\mathbf{G}\big((S',\Sigma',\underline{\zeta}')\big)$.
In other words, the sets of isomorphisms are identified under $\mathbf{G}$.
\end{lem}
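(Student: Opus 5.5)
The plan is to check the three conditions defining an isomorphism of triples one at a time. In both groupoids these conditions are stated in terms of the same permutation of $r$ letters. Once $w'\in W(L_{R'})$ and $w=\mathbf{G}(w')\in W(L_R)$ are identified with the same permutation $\tau$ of $\{1,\dots,r\}$, the claim becomes the following statement. Conjugation by $\tau$ carries $(R',\Pi',\underline{z}')$ to $(S',\Sigma',\underline{\zeta}')$ if and only if it carries $(R,\Pi,\underline{z})$ to $(S,\Sigma,\underline{\zeta})$.

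\emph{Levi factors.} Let $R'$ correspond to $(n_1,\dots,n_r)$ and $S'$ to $(n'_1,\dots,n'_r)$. Then $R$ and $S$ correspond to $(n_1d,\dots,n_rd)$ and $(n'_1d,\dots,n'_rd)$. Conjugation by $\tau$ sends $L_{R'}$ to $L_{S'}$ exactly when $n'_j=n_{\tau^{-1}(j)}$ for all $j$. Multiplying by $d$ shows this holds exactly when $\tau$ sends $L_R$ to $L_S$.

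\emph{Exponents.} By Step~1, $\underline{z}=\underline{z}'$ and $\underline{\zeta}=\underline{\zeta}'$ as $r$-tuples, and $\tau$ acts on both by permuting coordinates. So $\tau(\underline{z}')=\underline{\zeta}'$ if and only if $\tau(\underline{z})=\underline{\zeta}$.

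\emph{Representations.} Write $\Pi'=\Pi'_1\otimes\dots\otimes\Pi'_r$ and $\Sigma'=\Sigma'_1\otimes\dots\otimes\Sigma'_r$. The condition $\tau(\Pi')=\Sigma'$ means $\Sigma'_j\cong\Pi'_{\tau^{-1}(j)}$ for each $j$. Likewise $\tau(\Pi)=\Sigma$ means $\mathbf{G}(\Sigma'_j)\cong\mathbf{G}(\Pi'_{\tau^{-1}(j)})$, since $\Pi_j=\mathbf{G}(\Pi'_j)$ and $\Sigma_j=\mathbf{G}(\Sigma'_j)$.
\begin{itemize}
\item The forward implication is immediate, because $\mathbf{G}$ is well defined on isomorphism classes.
\item The backward implication uses the injectivity of the global Jacquet--Langlands correspondence $\mathbf{G}$ on the discrete spectrum, recalled in Sect.~\ref{sect:disc-spec-gln}. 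Explicitly, $\mathbf{G}(MW'(\rho',k'))=MW(\rho,k_\rho k')$ determines $k'$, and $\rho$ determines $\rho'$. Both representations live on the same group $G'_{n_{\tau^{-1}(j)}}(\mathbb{A})$, by the Levi-factor step.
\end{itemize}

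\emph{Where the care lies.} There is no real obstacle; the main point is the bookkeeping of the identification of $W(L_{R'})$ and $W(L_R)$ with permutations of blocks. One should confirm that this identification is compatible with the actions used in the definition of isomorphisms. Both identifications send a block permutation to the minimal-length coset representative that permutes the diagonal blocks. Hence $\tau$ acts on blocks, tensor factors and coordinates in the same way on both sides, and the three equivalences combine to give the lemma.
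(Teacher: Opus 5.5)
Your proof is correct and takes essentially the paper's approach: the paper's one-line proof observes that $\mathbf{G}$ on triples commutes with the actions of $w'$ and $w=\mathbf{G}(w')$, which are identified with the same permutation, and your componentwise check of blocks, exponents and tensor factors is an unpacking of that observation. You also make explicit a step the paper leaves implicit, namely that the converse direction needs injectivity of $\mathbf{G}$ on the discrete spectrum (equivalently, the injectivity of $\mathbf{G}$ on triples asserted in Step 1).
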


\begin{proof}
By definition of the Jacquet--Langlands correspondence on triples, it is clear that $\mathbf{G}$ commutes with the
action of $w'\in W(L_{R'})$ and $w\in W(L_R)$ that are identified with the same permutation. Thus, the claim follows.
\end{proof}

In the construction of the Jacquet--Langlands correspondence below, we also need the following simple lemma,
which assures that there are no isomorphisms between two triples in $\mathcal{M}$,
one of which is in the image of the Jacquet--Langlands correspondence, and the other is not.

\begin{lem}\label{lem:no-morphs-between-transfer-and-non}
Let $w\in W(R)$ be an isomorphism from the triple $(R,\Pi,\underline{z})$ to the triple $(S,\Sigma,\underline{\zeta})$
in $\mathcal{M}$. Then $(R,\Pi,\underline{z})$ is in the image of the Jacquet--Langlands correspondence
of triples if and only if $(S,\Sigma,\underline{\zeta})$ is in that image.
\end{lem}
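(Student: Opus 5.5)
The plan is to reduce the statement to the combinatorial description of the image given in Lemma \ref{lem:image-triples}. By that lemma, membership of a triple in $\mathcal{M}^{\mathbf{G}}$ depends only on the partition of the multiset $\mathcal{E}(\sigma)$ into segments attached to it. Concretely, for every $\rho$, every segment based on $\rho$ must have length a multiple of $k_\rho$. So it suffices to show that an isomorphism $w$ from $(R,\Pi,\underline{z})$ to $(S,\Sigma,\underline{\zeta})$ does not change this multiset of segments. Since $w^{-1}$ is an isomorphism in the opposite direction, it is enough to prove one implication; the converse follows by symmetry.

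First I recall how a triple determines its segments, following \cite[Lemma 3.1]{grbac-grobner:franke-gln}. Write $R$ for the ordered partition $(n_1,\dots,n_r)$, $\Pi\cong\Pi_1\otimes\dots\otimes\Pi_r$ with $\Pi_j\cong MW(\rho_j,a_j)$, and $\underline{z}=(z_1,\dots,z_r)$. Then the $j$-th block contributes the segment
$$
\Delta\left(\rho_j; z_j-\tfrac{a_j-1}{2}, z_j+\tfrac{a_j-1}{2}\right),
$$
which is the cuspidal support of $\Pi_j\nu^{z_j}$, as recalled in Sect.~\ref{sect:disc-spec-gln}. By the uniqueness in the M\oe glin--Waldspurger classification, the pair $(\rho_j,a_j)$ is determined by $\Pi_j$. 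Hence the partition into segments is the unordered collection of the segments attached to the pairs $(\Pi_j,z_j)$ for $j=1,\dots,r$.

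Next, as explained before Lemma \ref{lem:morphisms-preserved}, the isomorphism $w$ is a permutation of the $r$ blocks. It sends $\Pi_j$ to the factor $\Sigma_{w(j)}$ and $z_j$ to $\zeta_{w(j)}$, so it permutes the pairs $(\Pi_j,z_j)$. Consequently the multiset of segments attached to $(S,\Sigma,\underline{\zeta})$ equals that attached to $(R,\Pi,\underline{z})$, and in particular the lengths $a_j$ together with their base representations $\rho_j$ are preserved. The divisibility condition of Lemma \ref{lem:image-triples} therefore holds for one triple exactly when it holds for the other.

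The only delicate point is a well-definedness check: the lemma needs the segment partition to be an invariant of the triple and not of a chosen ordering. This holds because segments are recovered from each factor individually, so the collection is automatically unordered.
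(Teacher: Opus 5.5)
Your proposal is correct and follows essentially the same route as the paper. An isomorphism $w$ only permutes the diagonal blocks, the factors of $\Pi$ and the coordinates of $\underline{z}$, so the multiset of segments attached to the triple (their lengths and base representations $\rho$) is unchanged; Lemma~\ref{lem:image-triples} characterizes membership in the image purely by these lengths, and that finishes the proof. Your version simply makes explicit how each block $\Pi_j\nu^{z_j}$ produces its segment, a step the paper leaves implicit.
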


\begin{proof}
Since isomorphisms $w\in W(R)$ are given as permutations of diagonal blocks, factors in tensor products and coordinates in the triple $(R,\Pi,\underline{z})$,
the lengths of the segments in the partition associated with $(R,\Pi,\underline{z})$ as in \cite[Lemma 3.1]{grbac-grobner:franke-gln},
remain unchanged under the action of $w$. On the other hand, the image of the Jacquet--Langlands correspondence of triples
is described in Lemma \ref{lem:image-triples} in terms of the lengths of these segments. Thus, the lemma follows.
\end{proof}

\subsection*{STEP 3: The partial order on the set of triples}

In this step we prove that the Jacquet--Langlands transfer of triples,
defined in Step 1, preserves the partial order required for the Franke filtration and introduced in Sect.~\ref{sect:preliminaries}.
At the first look, it seems there is nothing to prove, as the third entry of the triples remains unchanged. However, the
image in the minimal parabolic subgroup is what should be compared.
Given the third entry $\underline{z}'=(z_1,\dots ,z_r)$ of a triple in $\mathcal{M}'$, let
$$
\iota'(\underline{z}')=(\zeta_1,\dots ,\zeta_n)\in\check{\mathfrak{a}}_{P_0'}^{G_n'}
$$
denote the inclusion of $\underline{z}'$ into $\check{\mathfrak{a}}_{P_0'}^{G_n'}$.
Then, under the Jacquet--Langlands transfer, the inclusion of $\underline{z}=\mathbf{G}(\underline{z}')$ into $\check{\mathfrak{a}}_{P_0}^{G_{nd}}$
is clearly given as
$$
\iota(\mathbf{G}(\underline{z}'))=(\zeta_1,\dots ,\zeta_1,\zeta_2,\dots ,\zeta_2,\dots ,\zeta_n,\dots ,\zeta_n)
\in\check{\mathfrak{a}}_{P_0}^{G_{nd}},
$$
where each $\zeta_j$ appears $d$ times. The following lemma solves the issue of partial order under the Jacquet--Langlands correspondence.

\begin{lem}\label{lem:partial-order-preserved}
In the notation as above, the partial order $\succ$ recalled in Sect.~\ref{sect:preliminaries} is preserved under the Jacquet--Langlands correspondence, that is,
for any pair of triples in $\mathcal{M}'$ with the third entries $\underline{z}_1'$ and $\underline{z}_2'$, we have that
$$
\iota'(\underline{z}_1')\succ \iota'(\underline{z}_2')
$$
if and only if
$$
\iota(\mathbf{G}(\underline{z}_1'))\succ \iota (\mathbf{G}(\underline{z}_2')).
$$
\end{lem}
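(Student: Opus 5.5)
The plan is to compare directly the two systems of partial-sum inequalities defining $\succ$. Write $\iota'(\underline{z}_1')=(\zeta_1,\dots,\zeta_n)$ and $\iota'(\underline{z}_2')=(\eta_1,\dots,\eta_n)$. By the explicit formula preceding the statement, $\iota(\mathbf{G}(\underline{z}_1'))$ and $\iota(\mathbf{G}(\underline{z}_2'))$ are obtained by repeating each coordinate $d$ times. We also set $A_k=\sum_{j\le k}\zeta_j$ and $B_k=\sum_{j\le k}\eta_j$ for $0\le k\le n$, so that $A_0=B_0=0$ and $A_n=B_n=0$, the latter because these vectors lie in the spaces of characters trivial on the center.

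\emph{Step 1 (partial sums of the inflated vectors).} For the inflated vectors, the partial sum of the first $kd+c$ coordinates, with $0\le k\le n-1$ and $0\le c\le d$, equals $dA_k+c\,\zeta_{k+1}$ on the first side and $dB_k+c\,\eta_{k+1}$ on the second. As a function of $c$, each of these is linear and interpolates between $dA_k$ and $dA_{k+1}$ (respectively $dB_k$ and $dB_{k+1}$). Concretely,
\[
dA_k+c\,\zeta_{k+1}=\tfrac{d-c}{d}\,dA_k+\tfrac{c}{d}\,dA_{k+1},
\]
and similarly for the $B$'s.

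\emph{Step 2 (the two directions).} First suppose $\iota'(\underline{z}_1')\succ\iota'(\underline{z}_2')$, so that $A_k\le B_k$ for all $1\le k\le n-1$; this also holds trivially for $k=0,n$. Then every partial sum of the inflated vector is a convex combination with the same weights of $dA_k\le dB_k$ and $dA_{k+1}\le dB_{k+1}$, so the required inequality holds. Conversely, the inequalities for the inflated vectors at the indices $kd$ with $1\le k\le n-1$ read $dA_k\le dB_k$, and dividing by $d>0$ recovers the original system.

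\emph{Step 3 (the condition $\neq$).} The repetition map $\mathbb{R}^n\to\mathbb{R}^{nd}$ is injective. Hence $\iota'(\underline{z}_1')\neq\iota'(\underline{z}_2')$ holds if and only if the inflated vectors differ.

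The only point requiring any care is the intermediate indices that are not multiples of $d$ in the forward direction. The convexity observation handles these, using the boundary values $A_0=B_0=A_n=B_n=0$ for the first and last blocks. Everything else is immediate from the definition of $\succ$ recalled in Sect.~\ref{sect:preliminaries}.
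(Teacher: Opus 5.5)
Your proof is correct and is essentially the paper's argument. The paper first reduces to comparison with $0$ by linearity, then handles the intermediate indices by a case split on the sign of $\zeta_j$, which is the same as your observation that $c\mapsto dA_k+c\,\zeta_{k+1}$ is linear and therefore controlled by its values at the block endpoints. Your version also treats the $\neq$ clause and the boundary value $A_n=B_n=0$ explicitly, whereas the paper uses both only tacitly.
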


\begin{proof}
Since $\mathbf{G}$, as well as $\iota$ and $\iota'$, is a linear map, it is sufficient to prove that
$$
\iota'(\underline{z}')\succ 0 \in\check{\mathfrak{a}}_{P_0'}^{G_{n}'}
$$
if and only if
$$
\iota(\mathbf{G}(\underline{z}'))\succ 0 \in\check{\mathfrak{a}}_{P_0}^{G_{nd}}
$$
for every element $\underline{z}'$ in the appropriate space $\check{\mathfrak{a}}_{P'}^{G_{n}'}$.
Writing $\iota'(\underline{z}')$ and $\iota(\mathbf{G}(\underline{z}'))$ in terms of coordinates
as above, this claim is reduced to the equivalence of two systems of inequalities.
The first system consists of $n-1$ inequalities given as
$$
\zeta_1+\dots +\zeta_i\leq 0,
$$
where $i=1,\dots ,n-1$.
The second system consists of $nd-1$ inequalities given by
$$
d\zeta_1+\dots +d\zeta_{j-1}+c\zeta_j \leq 0,
$$
where $j=1,\dots ,n$ and $c=0,\dots ,d-1$, with $c\neq 0$ in the case of $j=1$.

Observe that all the inequalities of the first system are obtained from those of the second one.
More precisely the $i$th inequality of the first system is obtained from the inequality of
the second system given by $j=i+1$ and $c=0$ after dividing by $d$.

Conversely, suppose that the first system of inequalities holds. Let
$$
d\zeta_1+\dots +d\zeta_{j-1}+ c\zeta_{j}\leq 0
$$
be any inequality in the second system. We show that it is satisfied by an argument
depending on the sign of $\zeta_j$. If $\zeta_j\leq 0$, then
$$
d\zeta_1+\dots +d\zeta_{j-1}+ c\zeta_{j}
=d(\zeta_1+\dots +\zeta_{j-1})+ c\zeta_{j}\leq 0,
$$
because $c\zeta_j\leq 0$ and the sum in parentheses
$$
\zeta_1+\dots +\zeta_{j-1}\leq 0
$$
is one of the inequalities of the first system. Otherwise, if $\zeta_j>0$,
then
$$
d\zeta_1+\dots +d\zeta_{j-1}+ c\zeta_{j}
=d(\zeta_1+\dots +\zeta_{j-1}+\zeta_j)- (d-c)\zeta_{j}\leq 0,
$$
because $-(d-c)\zeta_{j}\leq 0$ and the sum in parentheses
$$
\zeta_1+\dots +\zeta_{j-1}+\zeta_j\leq 0
$$
is again one of the inequalities of the first system.
\end{proof}

\subsection*{STEP 4: A partition of the set $\mathcal{S}=\mathcal{S}_{\{Q\},\varphi(\sigma)}$}

We now define a convenient partition of the set $\mathcal{S}=\mathcal{S}_{\{Q\},\varphi(\sigma)}$ of the inclusions
$\iota(\underline{z})$ into $\check{\mathfrak{a}}_{P_0}^{G_{nd}}$ of all third entries $\underline{z}$ of triples in
$\mathcal{M}=\mathcal{M}_{\{Q\},\varphi(\sigma)}$. This partition is required to establish the Jacquet--Langlands correspondence.
It depends on the partition of the corresponding set $\mathcal{S}'=\mathcal{S}_{\{P'\},\varphi(\pi')}$ fixed in Proposition \ref{prop:partition-of-S-inner-form}.

Let $\mathcal{S}^{\mathbf{G}}$ denote the subset of $\mathcal{S}$
that consists of $\iota(\mathbf{G}(\underline{z}'))$, where $\underline{z}'$ ranges over the third entries of all
triples in $\mathcal{M}'=\mathcal{M}_{\{P'\},\varphi(\pi')}$. In other words, these are the inclusions $\iota$ of the third entries of all the
triples in $\mathcal{M}$ that are obtained as the Jacquet--Langlands transfer of the triples in
$\mathcal{M}'$.

We transfer the partition
$$
\mathcal{S}'=\bigcup_{i=0}^{\ell'} \mathcal{S}_i'
$$
of Proposition \ref{prop:partition-of-S-inner-form} to obtain the partition
$$
\mathcal{S}^{\mathbf{G}}=\bigcup_{i=0}^{\ell'} \mathcal{S}^{\mathbf{G}}_i,
$$
where
$$
\mathcal{S}^{\mathbf{G}}_i=\left\{
\iota(\mathbf{G}(\underline{z}'))\,:\, \iota'(\underline{z}')\in \mathcal{S}_i'
\right\}
$$
for $i=0,1,\dots ,\ell'$.
According to Lemma \ref{lem:partial-order-preserved},
the partial order is preserved under the Jacquet--Langlands correspondence,
so that the partition of $\mathcal{S}^{\mathbf{G}}$ satisfies the three properties (i$\,'$)-(iii$\,'$) of
Proposition \ref{prop:partition-of-S-inner-form}.

Let $\widetilde{\mathcal{S}}=\mathcal{S} \setminus \mathcal{S}^{\mathbf{G}}$, i.e.,
$\widetilde{\mathcal{S}}$ contains all elements of $\mathcal{S}$ that are not the third entries
of triples obtained as the Jacquet--Langlands transfer of triples in $\mathcal{M}'$.
The following simple lemma is required in the sequel.

\begin{lem}\label{lem:easy-partial-order}
Let $\iota(\underline{\zeta})$ be an element of $\widetilde{\mathcal{S}}$. Then,
\begin{itemize}
\item if $\iota(\underline{\zeta})\succ \iota(\underline{z})$ for some $\iota(\underline{z})\in \mathcal{S}_i^{\mathbf{G}}$,
then, for any given $\iota(\underline{z}_0)$ in $\mathcal{S}_i^{\mathbf{G}}$,
the element $\iota(\underline{\zeta})$ is either incomparable to $\iota(\underline{z}_0)$, or $\iota(\underline{\zeta}) \succ \iota(\underline{z}_0)$,
\item if $\iota(\underline{\zeta})\prec \iota(\underline{z})$ for some $\iota(\underline{z})\in \mathcal{S}_i^{\mathbf{G}}$,
then, for any given $\iota(\underline{z}_0)$ in $\mathcal{S}_i^{\mathbf{G}}$,
the element $\iota(\underline{\zeta})$ is either incomparable to $\iota(\underline{z}_0)$, or $\iota(\underline{\zeta}) \prec \iota(\underline{z}_0)$.
\end{itemize}
\end{lem}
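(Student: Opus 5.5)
This lemma is really a statement about the partial order alone: the antichain $\mathcal{S}_i^{\mathbf{G}}$ is transitive-compatible with any outside element. I would argue by contradiction using transitivity of $\succ$ and property (i$\,'$) of the transferred partition. Recall from Step 4 that, by Lemma \ref{lem:partial-order-preserved}, the partition $\mathcal{S}^{\mathbf{G}}=\bigcup_i \mathcal{S}^{\mathbf{G}}_i$ satisfies (i$\,'$)--(iii$\,'$) of Proposition \ref{prop:partition-of-S-inner-form}. In particular, any two elements of the same $\mathcal{S}_i^{\mathbf{G}}$ are incomparable.

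For the first bullet, suppose $\iota(\underline{\zeta})\succ\iota(\underline{z})$ with $\iota(\underline{z})\in\mathcal{S}_i^{\mathbf{G}}$, and let $\iota(\underline{z}_0)\in\mathcal{S}_i^{\mathbf{G}}$ be arbitrary. If the conclusion failed, then $\iota(\underline{\zeta})$ would be comparable to $\iota(\underline{z}_0)$ without $\iota(\underline{\zeta})\succ\iota(\underline{z}_0)$. Since $\iota(\underline{\zeta})\in\widetilde{\mathcal{S}}$ and $\iota(\underline{z}_0)\in\mathcal{S}^{\mathbf{G}}$ are distinct, this forces $\iota(\underline{z}_0)\succ\iota(\underline{\zeta})$. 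Transitivity then gives $\iota(\underline{z}_0)\succ\iota(\underline{z})$. Here one must check that $\succ$ is indeed transitive; this is immediate from its definition by the system of partial-sum inequalities together with $\underline{s}\neq\underline{t}$. If equality occurred, antisymmetry of the non-strict inequalities would force the element between them to equal both, contradicting strictness. So $\iota(\underline{z}_0)\succ\iota(\underline{z})$ holds, and in particular $\iota(\underline{z}_0)\neq\iota(\underline{z})$. Two distinct elements of $\mathcal{S}_i^{\mathbf{G}}$ would thus be comparable, contradicting (i$\,'$). The case $\iota(\underline{z}_0)=\iota(\underline{z})$ is trivial, since the conclusion then holds by hypothesis.

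The second bullet is the same argument with all inequalities reversed. Suppose $\iota(\underline{\zeta})\prec\iota(\underline{z})$ and $\iota(\underline{\zeta})\succ\iota(\underline{z}_0)$. Then $\iota(\underline{z})\succ\iota(\underline{z}_0)$, which again contradicts (i$\,'$).

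There is no real obstacle here. The only point needing care is verifying that $\succ$ is a strict partial order, i.e.\ transitive and antisymmetric, which follows directly from the defining inequalities of Sect.~\ref{sect:preliminaries}.
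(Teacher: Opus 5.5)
Your proof is correct and follows the paper's argument. In both, you assume $\iota(\underline{z}_0)\succ\iota(\underline{\zeta})\succ\iota(\underline{z})$ (resp.\ the reverse chain), use transitivity to produce two comparable elements of $\mathcal{S}_i^{\mathbf{G}}$, and contradict property (i$\,'$) transferred via Lemma \ref{lem:partial-order-preserved}. Your extra check that $\succ$ is transitive is something the paper takes for granted; note that the antisymmetry step uses that these elements lie in $\check{\mathfrak{a}}_{P_0}^{G_{nd}}$, where the coordinates sum to zero, because the defining inequalities only involve the first $nd-1$ partial sums.
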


\begin{proof}
We prove only the first claim, as the other is proved in the same way. Suppose the contrary, that is,
$\iota(\underline{\zeta})\succ \iota(\underline{z})$ for some $\iota(\underline{z})\in \mathcal{S}_i^{\mathbf{G}}$,
and there exists $\iota(\underline{z}_0)\in \mathcal{S}_i^{\mathbf{G}}$ such that $\iota(\underline{\zeta})\prec \iota(\underline{z}_0)$.
But then, the transitivity of the partial order implies that $\iota(\underline{z}_0)\succ \iota(\underline{z})$.
This contradicts property (i$\,'$) of Proposition \ref{prop:partition-of-S-inner-form}, which is transferred to the partition
of $\mathcal{S}^{\mathbf{G}}$ as explained above.
\end{proof}

The following proposition provides a convenient partition of $\widetilde{\mathcal{S}}$ with respect to the fixed partition of $\mathcal{S}^{\mathbf{G}}$,
arising from the partition of $\mathcal{S}'$ as above.
This is only the first coarse partition that will be refined below.

\begin{prop}\label{prop:partition-of-tilde-S}
The finite set $\widetilde{\mathcal{S}}$ admits a unique ordered partition
$$
\widetilde{\mathcal{S}}=\bigcup_{i=0}^{\ell'+1}\widetilde{\mathcal{S}}_i,
$$
in possibly empty disjoint subsets $\widetilde{\mathcal{S}}_i$ such that
\begin{enumerate}
\item[(i)] for $0\leq i_0< i\leq \ell'+1$, elements $\iota(\underline{z}_0)$ in $\mathcal{S}^{\mathbf{G}}_{i_0}$ and
$\iota(\underline{\zeta})$ in $\widetilde{\mathcal{S}}_i$ are either incomparable, or $\iota(\underline{\zeta})\succ\iota(\underline{z}_0)$,
\item[(ii)] for every given element $\iota(\underline{\zeta}_0)$ in $\widetilde{\mathcal{S}}_{i_0}$, and for every integer $i$ such that $i_0\leq i\leq \ell'$, there
exists an element $\iota(\underline{z})$ in $\mathcal{S}_i^{\mathbf{G}}$ such that $\iota(\underline{z})\succ\iota(\underline{\zeta}_0)$.
\end{enumerate}
Moreover, this unique partition of $\widetilde{S}$ satisfies
\begin{enumerate}
\item[(iii)] for $0\leq i_0< i\leq \ell'+1$, elements $\iota(\underline{\zeta}_0)$ in $\widetilde{\mathcal{S}}_{i_0}$ and
$\iota(\underline{\zeta})$ in $\widetilde{\mathcal{S}}_i$ are either incomparable, or $\iota(\underline{\zeta})\succ\iota(\underline{\zeta}_0)$
\end{enumerate}
in the partial order $\succ$ on $\check{\mathfrak{a}}_{P_0}^{G_{nd}}$ defined in Sect.~\ref{sect:preliminaries}.
See Figure \ref{fig:partition-split-form-1}.
\end{prop}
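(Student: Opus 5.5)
The partition will be given by an explicit formula, after which (i), (ii), (iii) and uniqueness are checked in turn. For $\iota(\underline{\zeta})\in\widetilde{\mathcal{S}}$ put
$$
a(\underline{\zeta})=\min\{\,i\in\{0,\dots,\ell'+1\}\,:\, \text{no element of } \mathcal{S}^{\mathbf{G}}_i \text{ is } \succ \iota(\underline{\zeta})\,\},
$$
with the convention that the minimum is $\ell'+1$ if every $\mathcal{S}^{\mathbf{G}}_i$, $0\le i\le\ell'$, contains an element $\succ\iota(\underline{\zeta})$. Then set $\widetilde{\mathcal{S}}_i=\{\iota(\underline{\zeta})\in\widetilde{\mathcal{S}}: a(\underline{\zeta})=i\}$. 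These sets are disjoint, possibly empty, and cover $\widetilde{\mathcal{S}}$. Property (ii) is then immediate: for $\iota(\underline{\zeta}_0)\in\widetilde{\mathcal{S}}_{i_0}$, the definition of $a$ gives elements $\succ\iota(\underline{\zeta}_0)$ in $\mathcal{S}^{\mathbf{G}}_i$ for all $i<i_0$, not for $i\ge i_0$. This is the wrong direction, so the indexing has to be fixed before going further.

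The convention for the correct indexing comes from (iii$'$), transferred to $\mathcal{S}^{\mathbf{G}}$ via Lemma~\ref{lem:partial-order-preserved}. That property says that below every element of level $i_0$ there is a chain upward through every higher level. Hence if some $\iota(\underline{z})\in\mathcal{S}^{\mathbf{G}}_{i}$ satisfies $\iota(\underline{z})\succ\iota(\underline{\zeta})$, then so does some element of every level $j\ge i$, by transitivity. So the set of levels $i$ carrying an element above $\iota(\underline{\zeta})$ is an upper interval $\{b,\dots,\ell'\}$, possibly empty, in which case we set $b=\ell'+1$. Accordingly, I will define $a(\underline{\zeta})=b$, the least level having an element above $\iota(\underline{\zeta})$. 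Then (ii) holds exactly by the interval property.

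For (i), take $\iota(\underline{\zeta})\in\widetilde{\mathcal{S}}_i$ and $\iota(\underline{z}_0)\in\mathcal{S}^{\mathbf{G}}_{i_0}$ with $i_0<i$. Suppose, for contradiction, that $\iota(\underline{z}_0)\succ\iota(\underline{\zeta})$. Then level $i_0$ carries an element above $\iota(\underline{\zeta})$, so $a(\underline{\zeta})\le i_0<i$, contradicting $a(\underline{\zeta})=i$. Equality of the two elements is excluded because $\widetilde{\mathcal{S}}$ and $\mathcal{S}^{\mathbf{G}}$ are disjoint. Hence the two elements are incomparable or $\iota(\underline{\zeta})\succ\iota(\underline{z}_0)$.

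For (iii), take $\iota(\underline{\zeta}_0)\in\widetilde{\mathcal{S}}_{i_0}$ and $\iota(\underline{\zeta})\in\widetilde{\mathcal{S}}_i$ with $i_0<i$, and suppose $\iota(\underline{\zeta}_0)\succ\iota(\underline{\zeta})$. Since $i_0\le\ell'$, level $i_0$ contains some $\iota(\underline{z})\succ\iota(\underline{\zeta}_0)$. By transitivity $\iota(\underline{z})\succ\iota(\underline{\zeta})$, so $a(\underline{\zeta})\le i_0<i$, a contradiction.

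For uniqueness, suppose $\widetilde{\mathcal{S}}=\bigcup\widetilde{\mathcal{S}}''_i$ is another partition satisfying (i) and (ii), and let $\iota(\underline{\zeta})\in\widetilde{\mathcal{S}}''_{j}$. By (ii), every level $i$ with $j\le i\le\ell'$ carries an element above $\iota(\underline{\zeta})$, so $a(\underline{\zeta})\le j$. By (i), no level $i_0<j$ carries an element above $\iota(\underline{\zeta})$: such an element would be $\succ\iota(\underline{\zeta})$, neither incomparable nor below it. Thus $a(\underline{\zeta})\ge j$, giving $j=a(\underline{\zeta})$, and the two partitions coincide.

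The main obstacle is the interval property, which needs (iii$'$) in its full multi-step form transferred to $\mathcal{S}^{\mathbf{G}}$. The delicate point in the transfer is that $\succ$ is compared on $\mathcal{S}^{\mathbf{G}}\subseteq\check{\mathfrak{a}}_{P_0}^{G_{nd}}$, which Lemma~\ref{lem:partial-order-preserved} handles. Lemma~\ref{lem:easy-partial-order} is not strictly needed, though it is consistent with these arguments.
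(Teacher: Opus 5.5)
Your proof is correct. The partition you end up with is exactly the paper's, but you reach it by a different route.

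The paper builds the blocks by downward recursion. $\widetilde{\mathcal{S}}_{\ell'+1}$ consists of the elements of $\widetilde{\mathcal{S}}$ lying below no element of $\mathcal{S}^{\mathbf{G}}_{\ell'}$. Then $\widetilde{\mathcal{S}}_i$ consists of the remaining elements lying below no element of $\mathcal{S}^{\mathbf{G}}_{i-1}$, and $\widetilde{\mathcal{S}}_0$ is what is left. The paper checks (i) and (ii) for consecutive indices directly from the construction. It then extends each of them to arbitrary indices by a separate appeal to (iii$\,'$) and transitivity. Uniqueness is proved by looking at the largest index at which two partitions differ.

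You instead isolate once the fact that drives those extensions. Call $B(\underline{\zeta})$ the set of levels $i$ such that $\mathcal{S}^{\mathbf{G}}_i$ contains an element $\succ\iota(\underline{\zeta})$. By (iii$\,'$) and transitivity this set is an upper interval $\{b,\dots,\ell'\}$, and you put $\iota(\underline{\zeta})$ in the block with index $b=\min B(\underline{\zeta})$, or $\ell'+1$ if $B(\underline{\zeta})=\emptyset$. Unwinding the paper's recursion with this interval property shows the two partitions coincide.

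What your closed formula buys is that (i), (ii) and (iii) become one-line checks, and uniqueness becomes pointwise. In any admissible partition, (ii) gives $\{j,\dots,\ell'\}\subseteq B(\underline{\zeta})$ for the block index $j$ of $\iota(\underline{\zeta})$, and (i) excludes all levels below $j$. Hence $B(\underline{\zeta})=\{j,\dots,\ell'\}$, with no induction over blocks. The paper's recursion is more algorithmic and parallels the proof of Proposition~\ref{prop:partition-of-S-inner-form}, at the cost of invoking (iii$\,'$) separately in the proofs of (i) and (ii).

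In a final write-up, delete your opening false start. The first definition of $a$ (least level with \emph{no} element above) is the wrong one, and the sentence claiming that (ii) is ``immediate'' for it is false. Only the corrected definition $a(\underline{\zeta})=\min B(\underline{\zeta})$ should remain.
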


\begin{figure}
  \centering
  \includegraphics[scale=0.72]{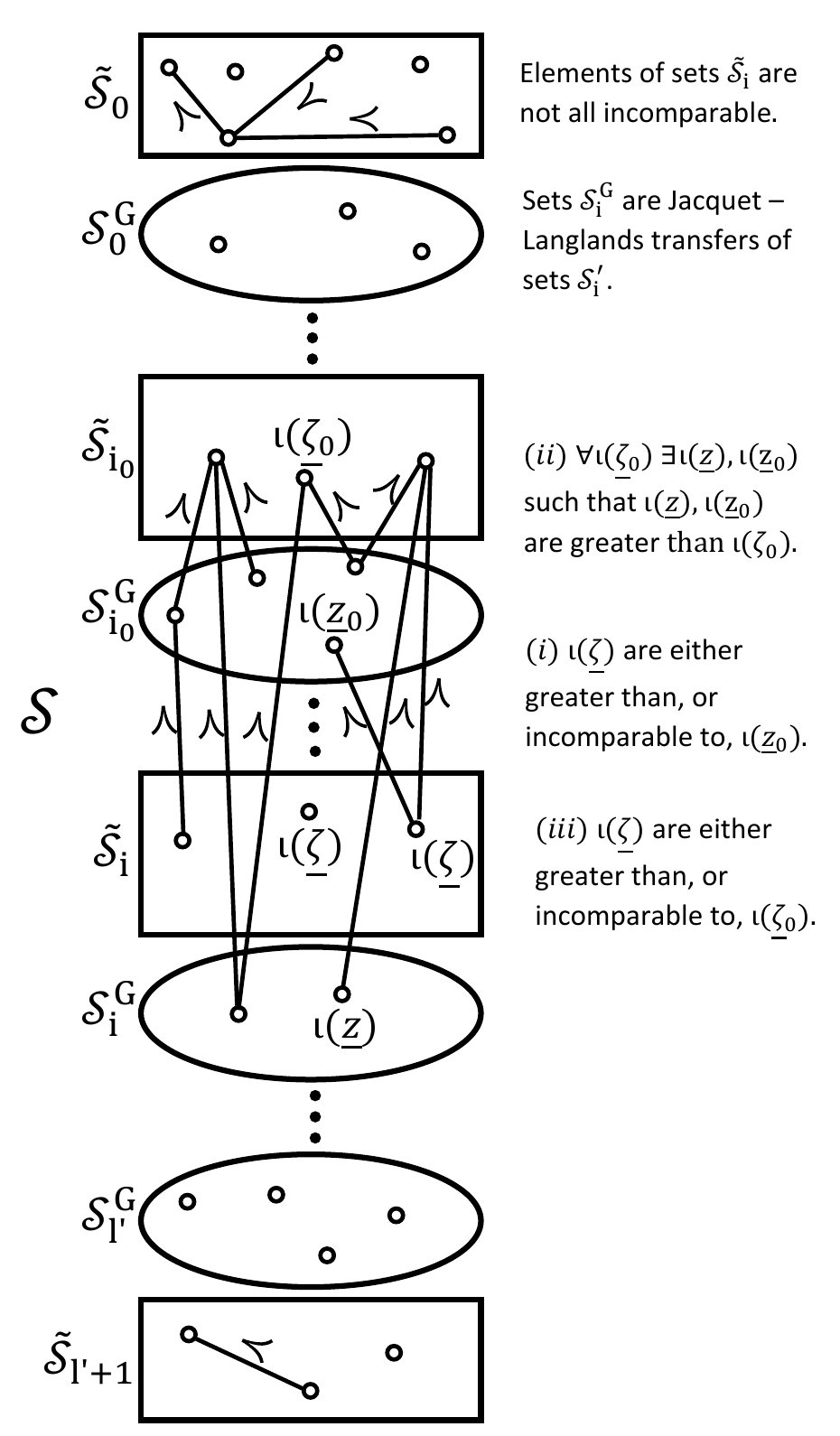}
  \caption{Partition of $\mathcal{S}$ as in Proposition \ref{prop:partition-of-tilde-S}}\label{fig:partition-split-form-1}
\end{figure}

\begin{proof}
The construction of subsets $\widetilde{\mathcal{S}}_i$ of the partition is downward inductive.
Let $\widetilde{\mathcal{S}}_{\ell'+1}$ be the (possibly empty) set of all $\iota(\underline{\zeta})$ in $\widetilde{\mathcal{S}}$
such that $\iota(\underline{\zeta})$ is either greater than, or incomparable to, every
element of $\mathcal{S}_{\ell'}^{\mathbf{G}}$. Suppose that $1\leq i<\ell'+1$ and that the subsets $\widetilde{\mathcal{S}}_j$ are constructed for $i+1\leq j\leq \ell'+1$.
Then, $\widetilde{\mathcal{S}}_i$ is defined as the (possibly empty) set of all $\iota(\underline{\zeta})$ in
$$
\widetilde{\mathcal{S}}\setminus\bigcup_{j=i+1}^{\ell'+1}\widetilde{\mathcal{S}}_j
$$
such that $\iota(\underline{\zeta})$ is either greater than, or incomparable to, every element of $\mathcal{S}_{i-1}^{\mathbf{G}}$.
The last (possibly empty) subset $\widetilde{\mathcal{S}}_{0}$ is defined as
$$
\widetilde{\mathcal{S}}_{0}=\widetilde{\mathcal{S}}\setminus \bigcup_{j=1}^{\ell'+1}\widetilde{S}_j,
$$
that is, it consists of all the remaining elements in $\widetilde{\mathcal{S}}$. We now prove that the constructed
partition satisfies properties (i)-(iii) of the proposition.

By the construction, property (i) holds for $i=i_0+1$. Suppose that $i>i_0+1$ and that (i) does not
hold, that is, there exist elements $\iota(\underline{z}_0)$ in $\mathcal{S}_{i_0}^{\mathbf{G}}$ and $\iota(\underline{\zeta})$ in $\widetilde{\mathcal{S}}_i$
such that $\iota(\underline{\zeta})\prec\iota(\underline{z}_0)$.
According to property (iii$\,'$) of Proposition \ref{prop:partition-of-S-inner-form}
applied to the partition of $\mathcal{S}^{\mathbf{G}}$ in $\mathcal{S}_i^{\mathbf{G}}$,
for any given $\iota(\underline{z}_0)$ in $\mathcal{S}_{i_0}^{\mathbf{G}}$,
there exists $\iota(\underline{z})$ in $\mathcal{S}_{i-1}^{\mathbf{G}}$ such that $\iota(\underline{z})\succ\iota(\underline{z}_0)$.
But then, the transitivity of the partial order implies that $\iota(\underline{\zeta})\prec\iota(\underline{z})$,
which is a contradiction with already proved property (i) in the case of $i=i_0+1$. Thus, property (i) is proved.

We now prove (ii) in the case of $i=i_0$. Suppose the contrary, that is, for the given element $\iota(\underline{\zeta}_0)$
in $\widetilde{\mathcal{S}}_{i_0}$, there is no element of $\mathcal{S}_{i_0}^{\mathbf{G}}$ greater than $\iota(\underline{\zeta}_0)$. In other words,
$\iota(\underline{\zeta}_0)$ is greater than, or incomparable to, every element of $\mathcal{S}_{i_0}^{\mathbf{G}}$. But then, by the construction,
$\iota(\underline{\zeta}_0)$ would have been chosen as an element of $\widetilde{\mathcal{S}}_i$ for some $i>i_0$, which is a contradiction.
Thus, property (ii) holds in the case of $i=i_0$. In the general case of $i_0< i\leq \ell'$, we prove (ii) using property (iii$\,'$) of
Proposition \ref{prop:partition-of-S-inner-form} and transitivity of the partial order.

To prove (iii), suppose the contrary, that is, for some $0\leq i_0<i\leq\ell'+1$, there exist $\iota(\underline{\zeta}_0)$ in $\widetilde{\mathcal{S}}_{i_0}$
and $\iota(\underline{\zeta})$ in $\widetilde{\mathcal{S}}_i$ such that $\iota(\underline{\zeta})\prec\iota(\underline{\zeta}_0)$. According to (ii),
which is already proved, there exists $\iota(\underline{z})$ in $\mathcal{S}_{i-1}^{\mathbf{G}}$ such that $\iota(\underline{z})\succ\iota(\underline{\zeta}_0)$.
Hence, by transitivity of the partial order, $\iota(\underline{\zeta})\prec\iota(\underline{z})$, which contradicts the construction
of $\widetilde{\mathcal{S}}_i$. Thus, property (iii) is proved.

It remains to prove uniqueness. Suppose there is another partition
$$
\widetilde{\mathcal{S}}=\bigcup_{i=0}^{\ell'+1}\widetilde{\widetilde{\mathcal{S}}}_i
$$
of $\widetilde{\mathcal{S}}$ satisfying (i)-(ii) of the proposition. Let $i$ be the greatest integer such that $\widetilde{\mathcal{S}}_{i}\neq\widetilde{\widetilde{\mathcal{S}}}_{i}$.
According to property (i), the elements of $\widetilde{\widetilde{\mathcal{S}}}_i$ are greater than, or incomparable to, every element of $\mathcal{S}_{i-1}^{\mathbf{G}}$.
It is then clear from the construction of $\widetilde{\mathcal{S}}_{i}$, that $\widetilde{\widetilde{\mathcal{S}}}_{i}\subseteq \widetilde{\mathcal{S}}_{i}$.
For the other inclusion, suppose there is an element $\iota(\underline{\zeta})$ in $\widetilde{\mathcal{S}}_i\setminus\widetilde{\widetilde{\mathcal{S}}}_i$.
In the second partition, the element $\iota(\underline{\zeta})$ belongs to some $\widetilde{\widetilde{\mathcal{S}}}_{i_0}$ with $i_0<i$.
According to property (ii), there exists $\iota(\underline{z})$ in $\mathcal{S}_{i-1}^{\mathbf{G}}$ such that $\iota(\underline{z})\succ\iota(\underline{\zeta})$.
This contradicts the construction of $\widetilde{\mathcal{S}}_i$, and thus $\widetilde{\mathcal{S}}_i=\widetilde{\widetilde{\mathcal{S}}}_i$ and the two partitions coincide.
\end{proof}

The partition of $\widetilde{\mathcal{S}}$ obtained in Proposition \ref{prop:partition-of-tilde-S} is refined using the same
procedure as in Proposition \ref{prop:partition-of-S-inner-form}. For each $\widetilde{\mathcal{S}}_i$, $i=0,1,\dots ,\ell'+1$,
there is a unique finite ordered partition
$$
\widetilde{\mathcal{S}}_i=\bigcup_{j=0}^{\ell_i} \widetilde{\mathcal{S}}_{i,j}
$$
satisfying properties (i$\,'$)-(iii$\,'$) of Proposition \ref{prop:partition-of-S-inner-form}.
The proof of this fact is the same as the proof of Proposition \ref{prop:partition-of-S-inner-form}.
The length of the partition of $\widetilde{\mathcal{S}}_i$ is $\ell_i +1$.
However, it is possible that $\widetilde{\mathcal{S}}_i$ is empty. In that case, we set $\ell_i=-1$, so that the length
of the empty partition of the empty set is $\ell_i+1=0$.

Combining the partition of $\mathcal{S}^{\mathbf{G}}$ and the refined partition of $\widetilde{\mathcal{S}}$,
we obtain the partition
$$
\mathcal{S}=\left(\bigcup_{i=0}^{\ell'}\mathcal{S}_i^{\mathbf{G}}\right)\cup\left(\bigcup_{i=0}^{\ell'+1}\bigcup_{j=0}^{\ell_i}\widetilde{\mathcal{S}}_{i,j}\right)
$$
of $\mathcal{S}$. It is of length
$$
L+1=\ell'+1+\sum_{i=0}^{\ell'+1}(\ell_i+1)=3+2\ell'+\sum_{i=0}^{\ell'+1}\ell_i.
$$
We fix the order of the subsets in this partition of $\mathcal{S}$ in the following way:
\begin{align*}
   & \widetilde{\mathcal{S}}_{0,0},\widetilde{\mathcal{S}}_{0,1},\dots ,\widetilde{\mathcal{S}}_{0,\ell_0}, \hskip 7cm \\
   & \mathcal{S}_0^{\mathbf{G}}, \\
   & \widetilde{\mathcal{S}}_{1,0},\widetilde{\mathcal{S}}_{1,1},\dots ,\widetilde{\mathcal{S}}_{1,\ell_1}, \\
(\clubsuit)\hskip 5.5cm & \mathcal{S}_1^{\mathbf{G}}, \\
   & \dots \\
   & \widetilde{\mathcal{S}}_{\ell',0},\widetilde{\mathcal{S}}_{\ell',1},\dots ,\widetilde{\mathcal{S}}_{\ell',\ell_{\ell'}}, \\
   & \mathcal{S}_{\ell'}^{\mathbf{G}}, \\
   & \widetilde{\mathcal{S}}_{\ell'+1,0},\widetilde{\mathcal{S}}_{\ell'+1,1},\dots ,\widetilde{\mathcal{S}}_{\ell'+1,\ell_{\ell'+1}}. \\
\end{align*}
See figure \ref{fig:partition-split-form-2}.
For later use, we introduce the convenient notation
$$
\mathcal{S}=\bigcup_{k=0}^{L}\mathcal{S}_k,
$$
for the ordered partition $(\clubsuit )$, where $L+1$ is its length. Here the subset $\mathcal{S}_k$ is the $k$-th subset of the ordered partition $(\clubsuit)$.
More precisely, for $0\leq i\leq \ell'$, let
$$
L_i=(\ell_0+1)+(\ell_1+1)+\dots +(\ell_{i}+1)+i=1+2i+\ell_0+\ell_1+\dots +\ell_i,
$$
and set $L_{-1}=-1$.
Then,
$$
\mathcal{S}_{L_i}=\mathcal{S}_i^{\mathbf{G}},
$$
for $0\leq i \leq \ell'$,
and
$$
\mathcal{S}_{L_i+j}=\widetilde{\mathcal{S}}_{i+1,j-1}
$$
for $-1\leq i\leq \ell'$ and $1\leq j\leq \ell_{i+1}+1$.
For the convenience of the reader, we also provide the converse formula
$$
\widetilde{\mathcal{S}}_{i,j}=\mathcal{S}_{L_{i-1}+j+1}
$$
for $0\leq i\leq \ell'+1$ and $0\leq j\leq \ell_i$.

\begin{figure}
  \centering
  \includegraphics[scale=0.72]{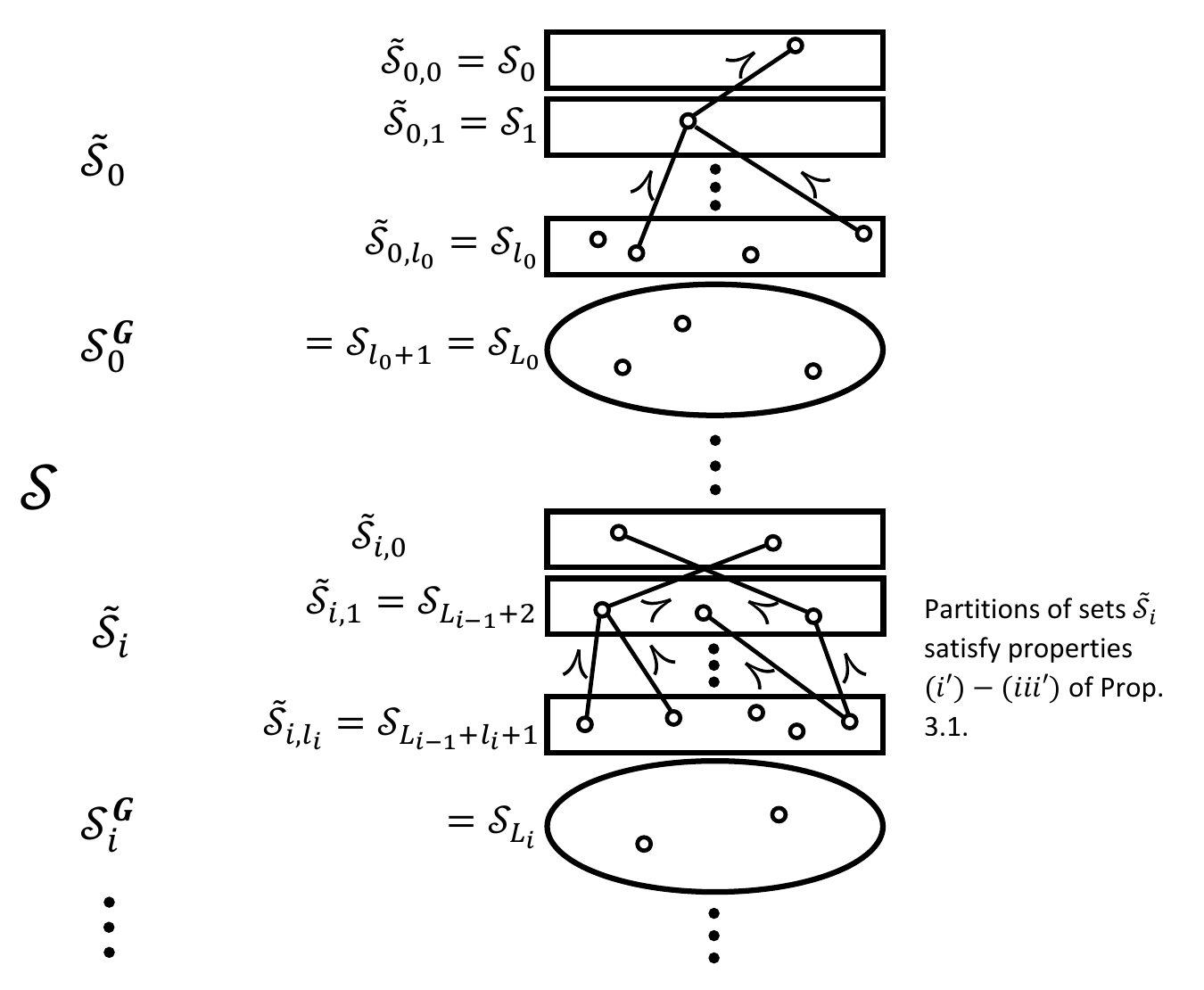}
  \caption{Partition $(\clubsuit)$ of $\mathcal{S}$ written also in the new notation}\label{fig:partition-split-form-2}
\end{figure}

\subsection*{STEP 5: A naive choice of the function $T=T_{\{Q\},\varphi(\sigma)}$}

In order to define explicitly the global Jacquet--Langlands correspondence,
we should carefully choose the function $T=T_{\{Q\},\varphi(\sigma)}$ in the definition
of the Franke filtration of $\mathcal{A}_{\{Q\},\varphi(\sigma)}$
compatibly with the fixed choice of $T'=T_{\{P'\},\varphi(\pi')}$ made in Theorem \ref{thm:franke-inner-form}.
This is achieved using the partition
$$
\mathcal{S}=\bigcup_{k=0}^{L} \mathcal{S}_k
$$
of the set $\mathcal{S}=\mathcal{S}_{\{Q\},\varphi(\sigma)}$
obtained at the end of Step 4, and denoted by $(\clubsuit)$, where $L+1$ is the length of the partition.

\begin{thm}\label{thm:franke-first-choice}
Let $\mathcal{A}_{\{Q\},\varphi(\sigma)}$ be the space of automorphic forms on $G_{nd}(\mathbb{A})$ with the cuspidal support
in the associate class represented by $\sigma$, obtained as in Step 1 from the cuspidal support $\{P'\},\varphi(\pi')$
of automorphic forms on $G_n'(\mathbb{A})$ represented by $\pi'$. Then, it can be arranged that the Franke filtration
of $\mathcal{A}_{\{Q\},\varphi(\sigma)}$ takes the form
\begin{equation}\label{eq:filt-in-thm-gln}
  \mathcal{A}_{\{Q\},\varphi(\sigma)}=\mathcal{A}_{\{Q\},\varphi(\sigma)}^0\supsetneqq
  \mathcal{A}_{\{Q\},\varphi(\sigma)}^1\supsetneqq \dots \supsetneqq \mathcal{A}_{\{Q\},\varphi(\sigma)}^{L}\supsetneqq \mathcal{A}_{\{Q\},\varphi(\sigma)}^{L+1}=\{0\},
\end{equation}
where the consecutive quotients of the filtration are isomorphic to
\begin{equation}\label{eq:qts-in-thm-gln}
\mathcal{A}_{\{Q\},\varphi(\sigma)}^k\slash \mathcal{A}_{\{Q\},\varphi(\sigma)}^{k+1}\cong
\colim_{\substack{(R,\Pi,\underline{z})\in\mathcal{M} \\ \iota(\underline{z})\in \mathcal{S}_{k}}}
\left(
{\Ind }_{R(\mathbb{A})}^{G_{nd}(\mathbb{A})}\left(\Pi\otimes\nu_{\underline{z}}\right)
\otimes S(\check{\mathfrak{a}}_{R,\mathbb{C}}^{G_{nd}})
\right),
\end{equation}
for $k=0,1, \dots , L$, and $\mathcal{S}_k$ are the subsets of the ordered partition $(\clubsuit )$ of
$\mathcal{S}$ obtained at the end of Step 4.
\end{thm}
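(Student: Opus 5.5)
As in Theorem \ref{thm:franke-inner-form}, the plan is to exhibit an admissible function $T=T_{\{Q\},\varphi(\sigma)}$ on $\mathcal{S}$ and then read off the quotients from the general description \eqref{eq:filt-qts} of the Franke filtration, which applies verbatim to $G_{nd}$. We set
$$
T(\iota(\underline{z}))=k \quad\text{for } \iota(\underline{z})\in\mathcal{S}_k,\qquad k=0,1,\dots ,L,
$$
using the ordered partition $(\clubsuit)$. Once $T$ is known to satisfy \eqref{eq:T-inequality}, the preimage of $k$ is exactly $\mathcal{S}_k$. Each $\mathcal{S}_k$ is non-empty by construction: the $\mathcal{S}_i^{\mathbf{G}}$ are non-empty, and empty $\widetilde{\mathcal{S}}_i$ contribute no terms because $\ell_i=-1$. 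Hence the filtration has exactly $L+1$ non-trivial steps, and its quotients are given by \eqref{eq:qts-in-thm-gln}. Strictness of the inclusions follows because each colimit over a non-empty set of triples is non-zero.

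The substance is to check that $\iota(\underline{z})\succ\iota(\underline{\zeta})$ with $\iota(\underline{z})\in\mathcal{S}_k$ and $\iota(\underline{\zeta})\in\mathcal{S}_{k_0}$ forces $k>k_0$. Equivalently, no element of a later block may lie below an element of an earlier block, and no two elements of the same block may be comparable. Within a block, incomparability comes from property (i$\,'$): for $\mathcal{S}_i^{\mathbf{G}}$ this is transported via Lemma \ref{lem:partial-order-preserved}, and for $\widetilde{\mathcal{S}}_{i,j}$ it comes from the refined partition. For two different blocks, I would split into four cases according to whether each block is of type $\mathcal{S}^{\mathbf{G}}$ or $\widetilde{\mathcal{S}}$.
\begin{enumerate}
\item[(a)] Two $\mathcal{S}^{\mathbf{G}}$ blocks: this is property (ii$\,'$), transported by Lemma \ref{lem:partial-order-preserved}.
\item[(b)] $\mathcal{S}^{\mathbf{G}}_{i_0}$ earlier than $\widetilde{\mathcal{S}}_{i,j}$ with $i>i_0$: this is Proposition \ref{prop:partition-of-tilde-S}(i).
\item[(c)] $\widetilde{\mathcal{S}}_{i_0,j}$ earlier than $\mathcal{S}^{\mathbf{G}}_{i}$ with $i\geq i_0$: suppose $\iota(\underline{z})\in\mathcal{S}^{\mathbf{G}}_i$ and $\iota(\underline{z})\prec\iota(\underline{\zeta}_0)$ for $\iota(\underline{\zeta}_0)\in\widetilde{\mathcal{S}}_{i_0}$. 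By Proposition \ref{prop:partition-of-tilde-S}(ii) there is $\iota(\underline{z}_1)\in\mathcal{S}^{\mathbf{G}}_i$ with $\iota(\underline{z}_1)\succ\iota(\underline{\zeta}_0)$. Transitivity then gives $\iota(\underline{z}_1)\succ\iota(\underline{z})$, contradicting (i$\,'$) inside $\mathcal{S}^{\mathbf{G}}_i$.
\item[(d)] Two $\widetilde{\mathcal{S}}$ blocks: with different first index this is Proposition \ref{prop:partition-of-tilde-S}(iii); with the same first index it is property (ii$\,'$) of the refined partition of $\widetilde{\mathcal{S}}_i$.
\end{enumerate}

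The main obstacle is purely the bookkeeping of the index translation between $(\clubsuit)$ and the pairs $(i,j)$, namely the formulas $\mathcal{S}_{L_i}=\mathcal{S}_i^{\mathbf{G}}$ and $\widetilde{\mathcal{S}}_{i,j}=\mathcal{S}_{L_{i-1}+j+1}$. One must make sure that "earlier in $(\clubsuit)$" matches exactly the index inequalities used in (a)–(d); in particular, $\widetilde{\mathcal{S}}_{i}$ precedes $\mathcal{S}^{\mathbf{G}}_{i}$, which is why case (c) needs $i\geq i_0$ rather than $i>i_0$. Case (c), being the only one not immediate from a stated property, is where I would be most careful.
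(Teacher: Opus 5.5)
Your proposal is correct and follows the paper's proof: the same choice $T(\iota(\underline{z}))=k$ for $\iota(\underline{z})\in\mathcal{S}_k$, checked against \eqref{eq:T-inequality} using the properties of Propositions \ref{prop:partition-of-S-inner-form} and \ref{prop:partition-of-tilde-S}. Your four-case check is more complete than the paper's one-line citation of (i$\,'$), (ii$\,'$), (i) and (ii). In particular, your case (d) makes explicit the use of Proposition \ref{prop:partition-of-tilde-S}(iii) and of property (ii$\,'$) for the refined partitions of the sets $\widetilde{\mathcal{S}}_i$, which the paper leaves implicit.
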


\begin{proof}
The form of the Franke filtration is determined by the choice of the function $T=T_{\{Q\},\varphi(\sigma)}$
on $\mathcal{S}=\mathcal{S}_{\{Q\},\varphi(\sigma)}$ satisfying condition \eqref{eq:T-inequality}. The filtration in the theorem
is clearly obtained if $T$ is chosen compatibly with the partition of $\mathcal{S}$ as in Step 4. More precisely,
$$
T(\iota(\underline{z}))=k
$$
for every $\iota(\underline{z})$ in $\mathcal{S}_k$. This choice of $T$ satisfies \eqref{eq:T-inequality},
because of properties (i$\,'$) and (ii$\,'$) of Proposition \ref{prop:partition-of-S-inner-form}
and properties (i) and (ii) of Proposition \ref{prop:partition-of-tilde-S}.
\end{proof}

\subsection*{STEP $\mathbf{5\frac{1}{2}}$: A motivating example}

Observe that it is possible that there exist triples $(R,\Pi,\underline{z})$ and $(S,\Sigma ,\underline{\zeta})$
in $\mathcal{M}=\mathcal{M}_{\{Q\},\varphi(\sigma)}$ such that $(R,\Pi,\underline{z})$ is in the image of the Jacquet--Langlands
correspondence, but $(S,\Sigma ,\underline{\zeta})$ is not, and nevertheless $\iota(\underline{z})=\iota(\underline{\zeta})$.
The simplest example of this phenomenon is the following.\footnote{Note that $G_2$ in our notation stands for $GL_2$, and
\emph{not} the exceptional group of type $G_2$.}

Let $D$ be a quaternion algebra, so that $d=2$. Let
$\rho'$ be a cuspidal automorphic representation of $G_2'(\mathbb{A})$ such that its Jacquet--Langlands transfer
is the representation
$$
\mathbf{G}(\rho')=MW(\rho ,2)
$$
of $G_4(\mathbb{A})$, and thus, $\rho$ is a cuspidal automorphic representation of $G_2(\mathbb{A})$ and $k_\rho =2$.
Let $\tau'$ be a representation of $G_1'(\mathbb{A})$ of dimension more than one.
It is well-known, already from \cite{jacquet-langlands:book}
that the Jacquet--Langlands transfer of $\tau'$
is a cuspidal automorphic representation $\tau$ of $G_2(\mathbb{A})$, so that $k_\tau=1$.
Consider the cuspidal support represented by the cuspidal automorphic representation
$$
\pi'\cong\tau'\nu^{1/2}\otimes\rho' \otimes \tau'\nu^{-1/2}
$$
of the Levi factor of the parabolic subgroup $P'$ of $G_4'$ corresponding to the ordered partition $(1,2,1)$.
Its Jacquet--Langlands transfer is the representation
$$
\pi\cong
\tau\nu^{1/2}\otimes MW(\rho,2) \otimes\tau\nu^{-1/2}
$$
of the Levi factor of the parabolic subgroup $P$ of $G_8$ corresponding to the ordered partition $(2,4,2)$.
Hence, the cuspidal support associated with the cuspidal support $\pi'$ is represented by the cuspidal automorphic representation
$$
\sigma\cong
\tau\nu^{1/2}\otimes\rho\nu^{1/2}\otimes \rho\nu^{-1/2}\otimes\tau\nu^{-1/2}
$$
of the Levi factor of the parabolic subgroup $Q$ of $G_8$ corresponding to the partition $(2,2,2,2)$.
Let
$$
(R,\Pi,\underline{z})=(P,\tau\otimes MW(\rho,2)\otimes\tau,(1/2,0,-1/2))
$$
be the triple in $\mathcal{M}$ given by $\pi$. It is the Jacquet--Langlands transfer of
the triple
$$
(P',\tau'\otimes\rho'\otimes\tau',(1/2,0,-1/2))\in \mathcal{M}'=\mathcal{M}_{\{P'\},\varphi(\pi')},
$$
and we have
$$
\iota (\underline{z})
=\iota \big( (1/2,0,-1/2) \big) = (1/2,1/2,0,0,0,0,-1/2,-1/2).
$$
On the other hand, consider the triple
$$
(S,\Sigma ,\underline{\zeta})
=(S,\rho\otimes MW(\tau ,2)\otimes\rho,(1/2,0,-1/2)),
$$
which is also in $\mathcal{M}$. It is not in the image of the
Jacquet--Langlands correspondence, because $k_\rho=2$ so that the segments associated to
$\rho$ must be of even length according to Lemma \ref{lem:image-triples}, and here
they are of length one. However, we have again
$$
\iota (\underline{\zeta})=
\iota \big((1/2,0,-1/2)\big)=(1/2,1/2,0,0,0,0,-1/2,-1/2),
$$
that is, $\iota(\underline{z})=\iota(\underline{\zeta})$.

\subsection*{STEP 6: A functorial refinement of the Franke filtration}

The upshot of the motivating example given in the previous (half-)step is that
we cannot use the function $T_{\{Q\},\varphi(\sigma)}$ as introduced by Franke in \cite{franke:filtration}
to establish the Jacquet--Langlands correspondence for spaces of automorphic forms.
The reason is that in Franke's definition of the filtration, as illustrated by the example,
it is possible that the triples in the image of the Jacquet--Langlands correspondence and those that are not in the image
have the same $\iota(\underline{z})$. Thus, such triples cannot be assigned different
values of the function $T_{\{Q\},\varphi(\sigma)}$, so that they cannot be divided into two different consecutive
quotients of the filtration, which is required in the construction of the Jacquet--Langlands correspondence
for spaces of automorphic forms. In other words, the Franke filtration as defined in \cite{franke:filtration}
and constructed in Theorem \ref{thm:franke-first-choice} is not functorial.

The key new insight required for the construction of the Jacquet--Langlands correspondence is a functorial refinement of
the Franke filtration of Theorem \ref{thm:franke-first-choice}. The refinement should allow two different
triples with the same $\iota(\underline{z})$ to contribute to different quotients of the filtration. Although this is not the Franke filtration as
defined in \cite{franke:filtration}, we now prove that the required functorial refinement can always be achieved in our context.
This is the subject of Theorem \ref{thm:franke-modified} below.

In order to be able to construct the required refinement of the Franke filtration, we must distinguish between equal elements
$\iota(\underline{z})$ in $\mathcal{S}^\mathbf{G}$ depending on whether they arise only from the triples that are
in the image of the Jacquet--Langlands transfer, or not. For $0\leq i\leq \ell'$, let $\epsilon_i\in\{0,1\}$ be the
indicator of whether there exist triples in $\mathcal{M}=\mathcal{M}_{\{Q\},\varphi(\sigma)}$ such that $\iota(\underline{z})$ is in $\mathcal{S}_i^{\mathbf{G}}$,
but they are not in the image of the Jacquet--Langlands correspondence for triples. If that is the case, we set $\epsilon_i=1$, and
otherwise $\epsilon_i=0$.

\begin{figure}
  \centering
  \includegraphics[scale=0.72]{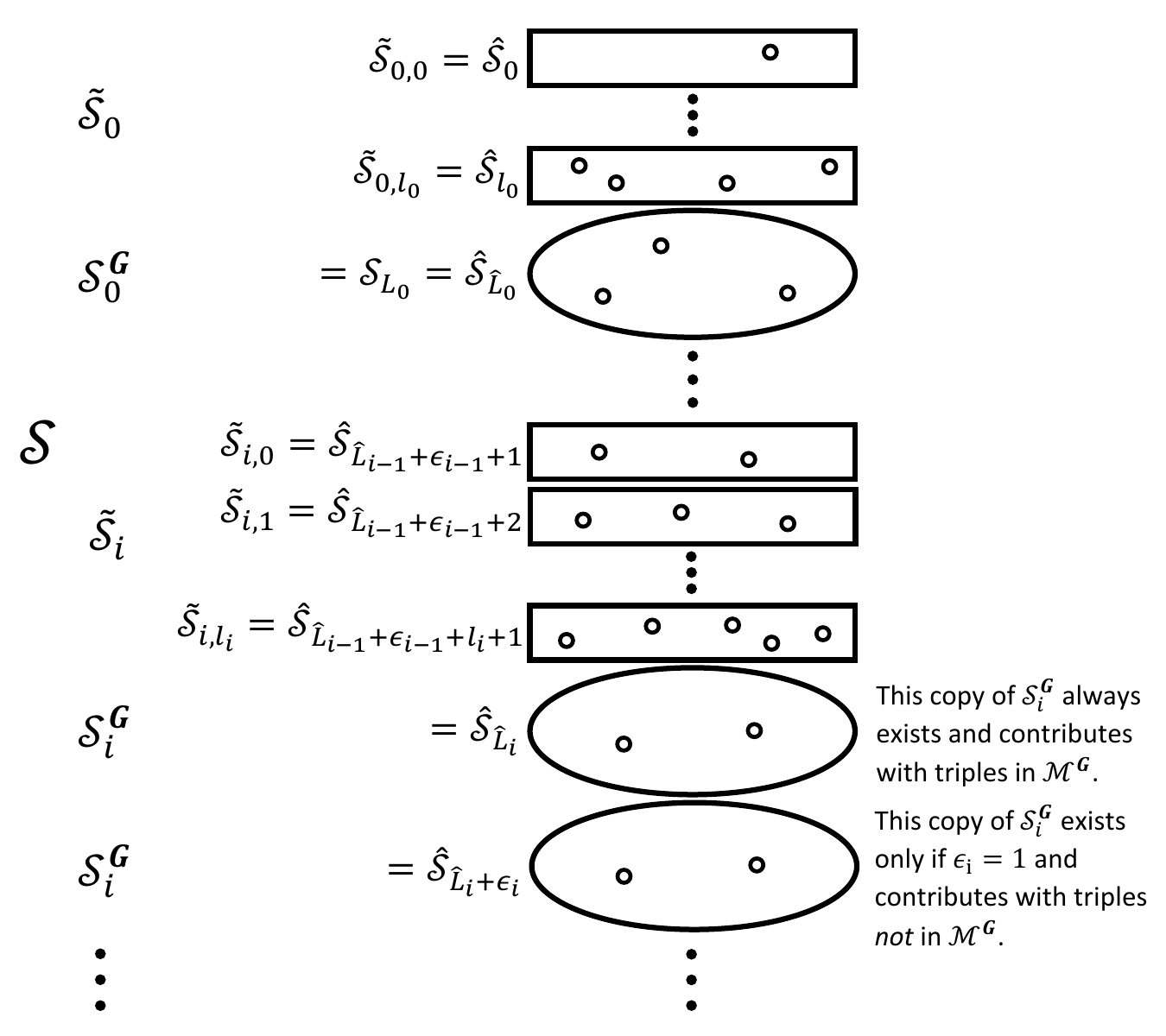}
  \caption{Functorially refined partition of $\mathcal{S}$ as required in Theorem \ref{thm:franke-modified}}\label{fig:partition-split-form-3}
\end{figure}

Before stating the theorem on the refined filtration, we introduce some notation.
In the ordered partition $(\clubsuit )$ of $\mathcal{S}=\mathcal{S}_{\{Q\},\varphi(\sigma)}$ in subsets $\mathcal{S}_k$, $k=0,1,\dots ,L$, as in Step 4,
we now insert an additional copy of $\mathcal{S}_i^{\mathbf{G}}$ whenever $\epsilon_i=1$.
See Figure \ref{fig:partition-split-form-3}.
Thus, we obtain a sequence of subsets $\widehat{\mathcal{S}}_k$ of $\mathcal{S}$, $k=0,1,\dots,\widehat{L}$, where
$$
\widehat{L}+1=L+1+\sum_{i=0}^{\ell'}\epsilon_i=3+2\ell'+\sum_{i=0}^{\ell'}(\ell_i+\epsilon_i),
$$
because we add an additional subset in the sequence whenever $\epsilon_i=1$.
Let
$$
\widehat{L}_i=L_i+\epsilon_0+\dots +\epsilon_{i-1}=
\ell_0+\epsilon_0+\dots +\ell_{i-1}+\epsilon_{i-1}+\ell_{i}+2i+1,
$$
for $0\leq i\leq \ell'$, and set $\widehat{L}_{-1}=-1$ and $\epsilon_{-1}=0$. Then,
$$
\widehat{\mathcal{S}}_{\widehat{L}_i}=\widehat{\mathcal{S}}_{\widehat{L}_i+\epsilon_i}=\mathcal{S}_i^{\mathbf{G}},
$$
for $0\leq i\leq \ell'$,
and
$$
\widehat{\mathcal{S}}_{\widehat{L}_i+\epsilon_i+j}=\widetilde{\mathcal{S}}_{i+1,j-1},
$$
for $-1\leq i\leq\ell'$ and $1\leq j\leq \ell_{i+1}+1$.
Conversely,
$$
\widetilde{\mathcal{S}}_{i,j}=\widehat{\mathcal{S}}_{\widehat{L}_{i-1}+\epsilon_{i-1}+j+1},
$$
for $0\leq i\leq\ell'+1$ and $0\leq j\leq \ell_{i}$.

Recall that the image in $\mathcal{M}$ of the Jacquet--Langlands
correspondence on the set of triples $\mathcal{M}'=\mathcal{M}_{\{P'\},\varphi(\pi')}$ is denoted by $\mathcal{M}^{\mathbf{G}}$ in Step 1.
This image is characterized in terms of segments in Lemma \ref{lem:image-triples}.

\begin{thm}\label{thm:franke-modified}
Let $\mathcal{A}_{\{Q\},\varphi(\sigma)}$ be the space of automorphic forms on $G_{nd}(\mathbb{A})$ with the cuspidal support
in the associate class represented by $\sigma$, obtained as in Step 1 from the cuspidal support $\{P'\},\varphi(\pi')$
of automorphic forms on $G_n'(\mathbb{A})$ represented by $\pi'$. Then, $\mathcal{A}_{\{Q\},\varphi(\sigma)}$ admits a
finite descending filtration
\begin{equation}\label{eq:filt-in-thm-modified}
  \mathcal{A}_{\{Q\},\varphi(\sigma)}=\widehat{\mathcal{A}}_{\{Q\},\varphi(\sigma)}^0\supsetneqq
  \widehat{\mathcal{A}}_{\{Q\},\varphi(\sigma)}^1\supsetneqq \dots \supsetneqq \widehat{\mathcal{A}}_{\{Q\},\varphi(\sigma)}^{\widehat{L}}\supsetneqq \widehat{\mathcal{A}}_{\{Q\},\varphi(\sigma)}^{\widehat{L}+1}=\{0\},
\end{equation}
where the consecutive quotients of the filtration are isomorphic to
\begin{equation}\label{eq:qts-in-thm-modified}
\widehat{\mathcal{A}}_{\{Q\},\varphi(\sigma)}^k\slash \widehat{\mathcal{A}}_{\{Q\},\varphi(\sigma)}^{k+1}\cong
\colim_{\substack{(R,\Pi,\underline{z})\in\mathcal{M} \\ \iota(\underline{z})\in \widehat{S}_{k}}}
\left(
{\Ind }_{R(\mathbb{A})}^{G_{nd}(\mathbb{A})}\left(\Pi\otimes\nu_{\underline{z}}\right)
\otimes S(\check{\mathfrak{a}}_{R,\mathbb{C}}^{G_{nd}})
\right),
\end{equation}
for $0\leq k\leq \widehat{L}$ such that $k\neq\widehat{L}_i,\widehat{L}_i+\epsilon_i$, where
$\widehat{\mathcal{S}}_k$ are the subsets in the sequence introduced above,
and
\begin{equation}\label{eq:qts-in-thm-modified-0}
\widehat{\mathcal{A}}_{\{Q\},\varphi(\sigma)}^{\widehat{L}_i}\slash \widehat{\mathcal{A}}_{\{Q\},\varphi(\sigma)}^{\widehat{L}_i+1}\cong
\colim_{\substack{(R,\Pi,\underline{z})\in\mathcal{M}^{\mathbf{G}} \\ \iota(\underline{z})\in \mathcal{S}_i^{\mathbf{G}}}}
\left(
{\Ind }_{R(\mathbb{A})}^{G_{nd}(\mathbb{A})}\left(\Pi\otimes\nu_{\underline{z}}\right)
\otimes S(\check{\mathfrak{a}}_{R,\mathbb{C}}^{G_{nd}})
\right),
\end{equation}
and if $\epsilon_i=1$, then
\begin{equation}\label{eq:qts-in-thm-modified-1}
\widehat{\mathcal{A}}_{\{Q\},\varphi(\sigma)}^{\widehat{L}_i+1}\slash \widehat{\mathcal{A}}_{\{Q\},\varphi(\sigma)}^{\widehat{L}_i+2}\cong
\colim_{\substack{(S,\Sigma,\underline{\zeta})\not\in\mathcal{M}^{\mathbf{G}} \\ \iota(\underline{\zeta})\in \mathcal{S}_i^{\mathbf{G}}}}
\left(
{\Ind }_{S(\mathbb{A})}^{G_{nd}(\mathbb{A})}\left(\Sigma\otimes\nu_{\underline{\zeta}}\right)
\otimes S(\check{\mathfrak{a}}_{S,\mathbb{C}}^{G_{nd}})
\right).
\end{equation}
\end{thm}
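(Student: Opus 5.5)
The plan is to start from the Franke filtration of Theorem \ref{thm:franke-first-choice}, whose quotients are indexed by the partition $(\clubsuit)$. We then split each quotient $\mathcal{A}^{L_i}_{\{Q\},\varphi(\sigma)}/\mathcal{A}^{L_i+1}_{\{Q\},\varphi(\sigma)}$ with $\epsilon_i=1$ into two pieces. All quotients indexed by $\widetilde{\mathcal{S}}_{i,j}$, and those indexed by $\mathcal{S}_i^{\mathbf{G}}$ with $\epsilon_i=0$, are left unchanged. After the reindexing $k\mapsto$ position in the sequence $\widehat{\mathcal{S}}_k$, they give \eqref{eq:qts-in-thm-modified}. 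Note that when $\epsilon_i=0$, every triple with $\iota(\underline{z})\in\mathcal{S}_i^{\mathbf{G}}$ lies in $\mathcal{M}^{\mathbf{G}}$, so \eqref{eq:qts-in-thm-modified-0} agrees with \eqref{eq:qts-in-thm-gln}. The filtration is then defined by inserting one intermediate subspace
$$
\mathcal{A}^{L_i}_{\{Q\},\varphi(\sigma)}\supsetneqq \mathcal{B}_i\supsetneqq\mathcal{A}^{L_i+1}_{\{Q\},\varphi(\sigma)}
$$
for each $i$ with $\epsilon_i=1$. The terms are relabelled as $\widehat{\mathcal{A}}^k$, so that $\widehat{\mathcal{A}}^{\widehat{L}_i+1}=\mathcal{B}_i$.

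The key step is to show that the colimit in \eqref{eq:qts-in-thm-gln} over triples with $\iota(\underline{z})\in\mathcal{S}_i^{\mathbf{G}}$ decomposes as a direct sum. One summand is the colimit over the full subgroupoid $\mathcal{M}^{\mathbf{G}}$, the other the colimit over its complement. By Lemma \ref{lem:no-morphs-between-transfer-and-non}, there are no isomorphisms in $\mathcal{M}$ between a triple in $\mathcal{M}^{\mathbf{G}}$ and one outside it. Hence the groupoid $\{(R,\Pi,\underline{z})\in\mathcal{M}:\iota(\underline{z})\in\mathcal{S}_i^{\mathbf{G}}\}$ is a disjoint union of two full subgroupoids, each a union of connected components. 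A colimit of a functor over a groupoid is the direct sum of the colimits over its connected components. Therefore the quotient is canonically
$$
\colim_{\mathcal{M}^{\mathbf{G}}}(\cdots)\ \oplus\ \colim_{\not\in\mathcal{M}^{\mathbf{G}}}(\cdots),
$$
as a representation of $G_{nd}(\mathbb{A})$, with the functor $M$ restricted to each piece. We then let $\mathcal{B}_i$ be the preimage in $\mathcal{A}^{L_i}_{\{Q\},\varphi(\sigma)}$ of the second summand under the projection to the quotient. This gives $\mathcal{A}^{L_i}/\mathcal{B}_i\cong$ \eqref{eq:qts-in-thm-modified-0} and $\mathcal{B}_i/\mathcal{A}^{L_i+1}\cong$ \eqref{eq:qts-in-thm-modified-1}. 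The order matters: placing the image part above the non-image part matches the ordering $\widehat{\mathcal{S}}_{\widehat{L}_i}$, $\widehat{\mathcal{S}}_{\widehat{L}_i+\epsilon_i}$. With a direct sum either order works, so we simply choose this one.

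The subspaces are $G_{nd}(\mathbb{A})$-stable, and $\mathfrak{g}$ and $K$-stable in the sense of Sect.~\ref{sect:preliminaries}, since they are preimages of subrepresentations. Finiteness and descent are clear. It remains to check the strict inclusions $\supsetneqq$. For the original terms these follow from Theorem \ref{thm:franke-first-choice}. For $\mathcal{B}_i$, both summands must be nonzero. The second is nonzero because $\epsilon_i=1$ provides a triple outside $\mathcal{M}^{\mathbf{G}}$, and each such triple contributes a nonzero induced representation. The colimit over a groupoid injects the quotient by the automorphism-group action of a nonzero representation, so it is nonzero. The first summand is nonzero because $\mathcal{S}_i^{\mathbf{G}}$ is nonempty, by construction from $\mathcal{S}_i'$.

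The main obstacle is the direct sum decomposition of the colimit. The isomorphism \eqref{eq:qts-in-thm-gln} is Franke's, with the functor $M$ given by intertwining operators. We must make sure that the identification of the quotient with the colimit is an isomorphism of $G_{nd}(\mathbb{A})$-modules compatible with the decomposition into connected components of the groupoid. This holds because Franke's map is assembled from the individual functor values. Nonvanishing of each summand also needs this identification to be injective on each component, not merely surjective. That injectivity is part of Franke's theorem, so the plan is to cite it for the component-wise statement rather than reprove it.
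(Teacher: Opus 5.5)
Your proposal is correct and is essentially the paper's proof. By Lemma~\ref{lem:no-morphs-between-transfer-and-non}, the quotient of Theorem~\ref{thm:franke-first-choice} indexed by $\mathcal{S}_i^{\mathbf{G}}$ is the direct sum of the colimits over $\mathcal{M}^{\mathbf{G}}$ and over its complement, and your preimage $\mathcal{B}_i$ of the second summand just makes explicit the splitting the paper describes in words.

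One minor slip is in your additional strictness check, which the paper does not address. Coinvariants of a nonzero representation under a finite automorphism group can vanish. For example, for $GL_2$ with $R$ the Borel subgroup, $\Pi=\chi\otimes\chi$ and $\underline{z}=0$, the intertwining operator acts by $-1$ and kills the degree-zero part. Nonvanishing must therefore come from the higher-degree part of $S(\check{\mathfrak{a}}_{R,\mathbb{C}}^{G_{nd}})$, on which the automorphism group acts faithfully, so every irreducible representation of that group occurs there.
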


\begin{proof}
The only difference between this filtration and the filtration of Theorem \ref{thm:franke-first-choice}
is that each of the quotients $\mathcal{A}_{\{Q\},\varphi(\sigma)}^{L_i}\slash \mathcal{A}_{\{Q\},\varphi(\sigma)}^{L_i+1}$,
for $i=0,1,\dots ,\ell'$, is divided in two different quotients whenever $\epsilon_i=1$. This is possible if the
two new quotients form a direct sum that gives the original quotient. That is true because we have a direct sum decomposition
\begin{align*}
  \mathcal{A}_{\{Q\},\varphi(\sigma)}^{L_i}\slash \mathcal{A}_{\{Q\},\varphi(\sigma)}^{L_i+1} \cong& \colim_{\substack{(R,\Pi,\underline{z})\in\mathcal{M} \\ \iota(\underline{z})\in \mathcal{S}_{k}}}
\left(
{\Ind }_{R(\mathbb{A})}^{G_{nd}(\mathbb{A})}\left(\Pi\otimes\nu_{\underline{z}}\right)
\otimes S(\check{\mathfrak{a}}_{R,\mathbb{C}}^{G_{nd}})
\right) \\
   \cong&  \colim_{\substack{(R,\Pi,\underline{z})\in\mathcal{M}^{\mathbf{G}} \\ \iota(\underline{z})\in \mathcal{S}_i^{\mathbf{G}}}}
\left(
{\Ind }_{R(\mathbb{A})}^{G_{nd}(\mathbb{A})}\left(\Pi\otimes\nu_{\underline{z}}\right)
\otimes S(\check{\mathfrak{a}}_{R,\mathbb{C}}^{G_{nd}})
\right) \\
& \bigoplus
\colim_{\substack{(S,\Sigma,\underline{\zeta})\not\in\mathcal{M}^{\mathbf{G}} \\ \iota(\underline{\zeta})\in \mathcal{S}_i^{\mathbf{G}}}}
\left(
{\Ind }_{S(\mathbb{A})}^{G_{nd}(\mathbb{A})}\left(\Sigma\otimes\nu_{\underline{\zeta}}\right)
\otimes S(\check{\mathfrak{a}}_{S,\mathbb{C}}^{G_{nd}})
\right).
\end{align*}
More precisely, Lemma \ref{lem:no-morphs-between-transfer-and-non} implies that there are no isomorphisms between triples in $\mathcal{M}^{\mathbf{G}}$
and those not in $\mathcal{M}^{\mathbf{G}}$, so that the colimit reduces to a direct sum, as claimed.
\end{proof}

\subsection*{STEP 7: The main result -- an explicit extension of the global Jacquet--Langlands correspondence}

We are now in position to state the main result of the paper, that is, the global Jacquet--Langlands correspondence
from the Franke filtration on the inner form $G_n'(\mathbb{A})$ introduced in Theorem \ref{thm:franke-inner-form}
to a convenient choice of a functorial refinement of the Franke filtration on $G_{nd}(\mathbb{A})$ obtained in Theorem \ref{thm:franke-modified}.
We use below the notation of these two theorems, but first we finally define the Jacquet--Langlands correspondence
of automorphic forms.

The Jacquet--Langlands correspondence $\mathbf{G}$ is defined in Step 1 on the set of triples $\mathcal{M}'=\mathcal{M}_{\{P'\},\varphi(\pi')}$
with values in the set of triples $\mathcal{M}=\mathcal{M}_{\{Q\},\varphi(\sigma)}$. The sets of triples carry the structure
of a groupoid, and in Step 2 the action of $\mathbf{G}$ on isomorphisms is defined.

In the construction of the Franke filtration, as recalled in Sect.~\ref{sect:filtr-defn},
the consecutive quotients are described in equation \eqref{eq:filt-qts} in terms of induced representations.
The isomorphism is obtained using a functor $M'=M_{\{P'\},\varphi(\pi')}$ (resp.~$M=M_{\{Q\},\varphi(\sigma)}$) from the groupoid $\mathcal{M}'$ (resp.~$\mathcal{M}$) of triples to the category $\mathcal{C}'$ (resp.~$\mathcal{C}$) of automorphic representations of $G_n'(\mathbb{A})$ (resp.~$G_{nd}(\mathbb{A})$), as defined in \eqref{eq:functor}, where the notion of the automorphic representation is in the sense of Sect.~\ref{sect:preliminaries}.
See also \cite{franke:filtration} or \cite[Sect.~2]{grbac-grobner:franke-gln} for the precise definitions.

Using these functors, we define the Jacquet--Langlands correspondence for automorphic forms, denoted again by $\mathbf{G}$,
as the functor defined on the subcategory of $\mathcal{C}'$ obtained as the image of the functor $M'$ such that
the diagram
$$
\xymatrix{
\mathcal{M}' \ar[rr]^{M'}\ar[dd]^{\mathbf G}& &\mathrm{Im}(M')\subseteq \mathcal{C}'\ar[dd]^{\mathbf G}\\
& & \\
\mathcal M\ar[rr]^{M} & & \mathrm{Im}(M)\subseteq \mathcal{C} }
$$
commutes, where the vertical arrow on the left-hand side is the Jacquet--Langlands correspondence for triples,
and horizontal arrows are the functors $M$ and $M'$ from the construction of the Franke filtration.

More precisely, given a triple $(R',\Pi',\underline{z}')$ in $\mathcal{M}'$, the Jacquet--Langlands correspondence is defined as
\begin{align*}
  \mathbf{G}\big(M'\big((R',\Pi',\underline{z}')\big)\big) &= \mathbf{G}\left({\Ind }_{R'(\mathbb{A})}^{G_n'(\mathbb{A})}\left(\Pi'\otimes\nu_{\underline{z}'}\right)
\otimes S(\check{\mathfrak{a}}_{R',\mathbb{C}}^{G_n'})\right)  \\
   &={\Ind }_{R(\mathbb{A})}^{G_{nd}(\mathbb{A})}\left(\Pi\otimes\nu_{\underline{z}}\right)
\otimes S(\check{\mathfrak{a}}_{R,\mathbb{C}}^{G_{nd}}) \\
  &=M \big((R,\Pi,\underline{z})\big)
\end{align*}
where $(R,\Pi,\underline{z})=\mathbf{G}\big((R',\Pi',\underline{z}')\big)$ is the triple in $\mathcal{M}$, as in Step 1.
The Jacquet--Langlands correspondence for triples relates the isomorphisms given by the Weyl group elements identified with the same permutation,
as explained in Step 2. The functors $M'$ and $M$ send an isomorphism of triples given by a Weyl group element to the intertwining
operator acting on the induced representation associated with the same Weyl group element, cf.~Sect.~\ref{sect:filtr-defn}.
Hence, the Jacquet--Langlands correspondence $\mathbf{G}$
for automorphic forms sends the intertwining operator associated with the Weyl group element $w'$
to the intertwining operator associated with the corresponding Weyl group element $w$. Thus, the Jacquet--Langlands
correspondence preserves intertwining operators between induced representations.
In particular, the Jacquet--Langlands correspondence $\mathbf{G}$ for automorphic forms preserves the structure of diagrams over which the colimits are taken
in the construction of the Franke filtration,
so that $\mathbf{G}$ commutes with formation of colimits. Therefore, all the colimits, including direct sums as a special case, are preserved under the
Jacquet--Langlands correspondence for automorphic forms.

\begin{thm}\label{thm:main-result-JL-beyond}
Let $\mathcal{A}_{\{P'\},\varphi(\pi')}$ be the space of automorphic forms on $G_n'(\mathbb{A})$
with cuspidal support in the associate class $\varphi(\pi')$.
Let $\mathcal{A}_{\{Q\},\varphi(\sigma)}=\mathbf{G}\left(\mathcal{A}_{\{P'\},\varphi(\pi')}\right)$
be the space of automorphic forms on $G_{nd}(\mathbb{A})$ corresponding to $\mathcal{A}_{\{P'\},\varphi(\pi')}$
in the Jacquet--Langlands correspondence as in Step 0.
Let
$$
\mathcal{A}_{\{P'\},\varphi(\pi')}=\mathcal{A}_{\{P'\},\varphi(\pi')}^0\supsetneqq
\mathcal{A}_{\{P'\},\varphi(\pi')}^1\supsetneqq \dots \supsetneqq \mathcal{A}_{\{P'\},\varphi(\pi')}^{\ell'}\supsetneqq \mathcal{A}_{\{P'\},\varphi(\pi')}^{\ell'+1}=\{0\},
$$
be the Franke filtration of the space $\mathcal{A}_{\{P'\},\varphi(\pi')}$ as in Theorem \ref{thm:franke-inner-form}.
Then, there exists a filtration
$$
\mathcal{A}_{\{Q\},\varphi(\sigma)}=\widehat{\mathcal{A}}_{\{Q\},\varphi(\sigma)}^0\supsetneqq
\widehat{\mathcal{A}}_{\{Q\},\varphi(\sigma)}^1\supsetneqq \dots \supsetneqq \widehat{\mathcal{A}}_{\{Q\},\varphi(\sigma)}^{\widehat{L}}\supsetneqq \widehat{\mathcal{A}}_{\{Q\},\varphi(\sigma)}^{\widehat{L}+1}=\{0\},
$$
of the space $\mathcal{A}_{\{Q\},\varphi(\sigma)}$, which is obtained in Theorem \ref{thm:franke-modified} as a functorial refinement of the filtration defined by Franke,
such that the global Jacquet--Langlands correspondence of spaces of automorphic forms satisfies
$$
\mathbf{G}\left(\mathcal{A}_{\{P'\},\varphi(\pi')}^i\slash \mathcal{A}_{\{P'\},\varphi(\pi')}^{i+1}\right)\cong
\mathcal{A}_{\{Q\},\varphi(\sigma)}^{\widehat{L}_i}\slash \mathcal{A}_{\{Q\},\varphi(\sigma)}^{\widehat{L}_i+1},
$$
for all $0\leq i\leq\ell'$, and all the remaining quotients in the filtration of $\mathcal{A}_{\{Q\},\varphi(\sigma)}$ are not in the image of $\mathbf{G}$
\end{thm}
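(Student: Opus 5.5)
The plan is to take the filtration of $\mathcal{A}_{\{Q\},\varphi(\sigma)}$ to be exactly the one constructed in Theorem \ref{thm:franke-modified}, and to compare its consecutive quotients with those of Theorem \ref{thm:franke-inner-form} via the commutative square of functors defining $\mathbf{G}$ on automorphic forms. By Theorem \ref{thm:franke-inner-form}, the $i$-th quotient on $G_n'(\mathbb{A})$ is the colimit of $M'$ over the full subgroupoid $\mathcal{M}'_i$ of triples $(R',\Pi',\underline{z}')$ with $\iota'(\underline{z}')\in\mathcal{S}'_i$. By \eqref{eq:qts-in-thm-modified-0}, the $\widehat{L}_i$-th quotient on $G_{nd}(\mathbb{A})$ is the colimit of $M$ over the full subgroupoid of $\mathcal{M}^{\mathbf{G}}$ of triples with $\iota(\underline{z})\in\mathcal{S}_i^{\mathbf{G}}$. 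So it suffices to show that $\mathbf{G}$ restricts to an equivalence (indeed an isomorphism) of these two groupoids, and then to use that $\mathbf{G}$ on automorphic forms commutes with colimits.

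For the objects, I would use Step 1 and the definition of $\mathcal{S}_i^{\mathbf{G}}$. If $\iota'(\underline{z}')\in\mathcal{S}'_i$, then $\iota(\mathbf{G}(\underline{z}'))\in\mathcal{S}_i^{\mathbf{G}}$ by definition. Conversely, let a triple of $\mathcal{M}^{\mathbf{G}}$ have $\iota(\underline{z})\in\mathcal{S}_i^{\mathbf{G}}$. Then it is $\mathbf{G}$ of a unique triple of $\mathcal{M}'$; injectivity follows from injectivity of $\mathbf{G}$ on discrete spectra and on triples. That preimage has third entry $\underline{z}'$ with $\underline{z}=\mathbf{G}(\underline{z}')$. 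Since $\iota\circ\mathbf{G}$ is injective on coordinates (each $\zeta_j$ is simply repeated $d$ times), $\iota'(\underline{z}')$ is determined by $\iota(\underline{z})$. Hence it lies in $\mathcal{S}'_i$, because the $\mathcal{S}_i^{\mathbf{G}}$ are disjoint transfers of the disjoint $\mathcal{S}'_i$. For the morphisms, Lemma \ref{lem:morphisms-preserved} identifies the isomorphism sets bijectively and compatibly with composition, since both are given by the same permutation. So $\mathbf{G}:\mathcal{M}'_i\to\mathcal{M}^{\mathbf{G}}_i$ is an isomorphism of groupoids. By the commutative diagram $\mathbf{G}\circ M'=M\circ\mathbf{G}$, where intertwining operators for $w'$ go to those for $w$, applying $\mathbf{G}$ to $\colim_{\mathcal{M}'_i}M'$ gives $\colim_{\mathcal{M}^{\mathbf{G}}_i}M$. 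This is the required isomorphism with $\widehat{\mathcal{A}}^{\widehat{L}_i}/\widehat{\mathcal{A}}^{\widehat{L}_i+1}$.

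For the remaining quotients, each is either of the form \eqref{eq:qts-in-thm-modified} with $\widehat{\mathcal{S}}_k=\widetilde{\mathcal{S}}_{i,j}$, or of the form \eqref{eq:qts-in-thm-modified-1}. In the first case, no triple with $\iota(\underline{z})\in\widetilde{\mathcal{S}}$ lies in $\mathcal{M}^{\mathbf{G}}$, since $\widetilde{\mathcal{S}}=\mathcal{S}\setminus\mathcal{S}^{\mathbf{G}}$. In the second case, the indexing triples lie outside $\mathcal{M}^{\mathbf{G}}$ by construction. Since the image of $\mathbf{G}$ consists precisely of colimits of $M$ over subgroupoids of $\mathcal{M}^{\mathbf{G}}$, and Lemma \ref{lem:no-morphs-between-transfer-and-non} prevents mixing, these quotients are not in the image.

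The step I expect to be the main obstacle is justifying that $\mathbf{G}$ on $\mathrm{Im}(M')$ genuinely commutes with colimits. In particular, the intertwining operators $M'(w')$ and $M(w)$, together with their action on the symmetric algebra factors, must correspond under $\mathbf{G}$, so that the colimit diagrams really are isomorphic. I would handle this by treating it as built into the definition of $\mathbf{G}$ via the commutative square: $\mathbf{G}$ is defined diagram-wise, so it transports the whole $\mathcal{M}'_i$-shaped diagram to the $\mathcal{M}^{\mathbf{G}}_i$-shaped one. Functoriality of colimits then yields the isomorphism.
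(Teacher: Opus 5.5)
Your proposal is correct and follows the paper's proof. You take the filtration of Theorem~\ref{thm:franke-modified}, and you use that $\mathbf{G}$ commutes with colimits to identify $\mathbf{G}$ of the $i$-th quotient with \eqref{eq:qts-in-thm-modified-0}; as in the paper, this commutation is built into the definition of $\mathbf{G}$ via the commutative square and the matching of intertwining operators from Lemma~\ref{lem:morphisms-preserved}. Your explicit check that $\mathbf{G}$ restricts to an isomorphism of the indexing subgroupoids, and your argument that the remaining quotients are indexed by triples outside $\mathcal{M}^{\mathbf{G}}$, spell out what the paper leaves implicit, since the paper only remarks that no quotients of the inner-form filtration are left over.
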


\begin{proof}
The theorem is now a consequence of all the preparatory work. According to the basic properties of the Jacquet--Langlands correspondence $\mathbf{G}$ for automorphic forms
defined above, we calculate the image of consecutive quotients of the Franke filtration of $\mathcal{A}_{\{P'\},\varphi(\pi')}$ as follows
\begin{align*}
  \mathbf{G}\left(\mathcal{A}_{\{P'\},\varphi(\pi')}^i\slash \mathcal{A}_{\{P'\},\varphi(\pi')}^{i+1}\right) &=
\mathbf{G}\left(\colim_{\substack{(R',\Pi',\underline{z}')\in\mathcal{M}' \\ \iota'(\underline{z}')\in \mathcal{S}'_{i}}}
\left(
{\Ind }_{R'(\mathbb{A})}^{G_n'(\mathbb{A})}\left(\Pi'\otimes\nu_{\underline{z}'}\right)
\otimes S(\check{\mathfrak{a}}_{R',\mathbb{C}}^{G_n'})
\right)\right)
\\
   &=  \colim_{\substack{(R,\Pi,\underline{z})\in\mathcal{M}^{\mathbf{G}} \\ \iota(\underline{z})\in \mathcal{S}_{i}^{\mathbf{G}}}}
\left(
{\Ind }_{R(\mathbb{A})}^{G_{nd}(\mathbb{A})}\left(\Pi\otimes\nu_{\underline{z}}\right)
\otimes S(\check{\mathfrak{a}}_{R,\mathbb{C}}^{G_{nd}})
\right),
\end{align*}
where $(R,\Pi,\underline{z})=\mathbf{G}\big((R',\Pi',\underline{z}')\big)$ is obviously in the image $\mathcal{M}^{\mathbf{G}}$,
$\iota(\underline{z})$ is in $\mathcal{S}_i^{\mathbf{G}}$ by definition of $\mathcal{S}_i^{\mathbf{G}}$ in Step 4.
We also used the fact that $\mathbf{G}$ commutes with colimits. Comparing with the description of the quotient
$$
\mathcal{A}_{\{Q\},\varphi(\sigma)}^{\widehat{L}_i}\slash \mathcal{A}_{\{Q\},\varphi(\sigma)}^{\widehat{L}_i+1}
$$
in Theorem \ref{thm:franke-modified}, we see that the first claim of the theorem holds. The remaining quotients of the filtration
of $\mathcal{A}_{\{Q\},\varphi(\sigma)}$ are clearly not in the image, because there are no quotients left in the
Franke filtration of $\mathcal{A}_{\{P'\},\varphi(\pi')}$.
\end{proof}

\end{document}